\DeclareMathOperator{\im}{\mathsf{Im}}
\DeclareMathOperator{\Hom}{\mathsf{Hom}}
\DeclareMathOperator{\sHom}{\underline{\Hom}}
\DeclareMathOperator{\End}{\mathsf{End}}
\DeclareMathOperator{\Ext}{\mathsf{Ext}}
\DeclareMathOperator{\id}{id}
\DeclareMathOperator{\gd}{gl.dim}
\DeclareMathOperator{\dd}{dom.dim}
\DeclareMathOperator{\md}{\mathsf{mod}}
\DeclareMathOperator{\smd}{\underline{\md}}
\DeclareMathOperator{\proj}{\mathsf{proj}}
\DeclareMathOperator{\add}{\mathsf{add}}
\DeclareMathOperator{\CM}{\mathsf{CM}}
\DeclareMathOperator{\sCM}{\underline{\CM}}
\def\der{\mathsf{D^{\mathrm{b}}}}
\def\K{\mathsf{K^{\mathrm{b}}}}
\def\rHom{\mathrm{Hom}}
\renewcommand\Im{\im}
\def\A{\mathcal{A}}
\def\C{\mathcal{C}}
\def\P{\mathcal{P}}
\def\T{\mathcal{T}}
\def\Z{\mathbb{Z}}
\def\a{\alpha}
\def\s{\sigma}
\def\Lam{\Lambda}
\def\Om{\Omega}
\def\op{\mathrm{op}}
\def\Gam{\Gamma}
\def\cK{\mathcal{K}}
\title{Auslander Correspondence for Triangulated Categories}
\author{Norihiro Hanihara}
\subjclass[2010]{18E30, 16E65, 16G70, 16E05}
\keywords{triangulated category; Auslander correspondence; periodic algebra; Cohen-Macaulay module}
\address{Graduate School of Mathematics, Nagoya University, Chikusa-ku, Nagoya, 464-8602, Japan}
\email{m17034e@math.nagoya-u.ac.jp}
\newtheorem{Thm}{Theorem}[section]
\newtheorem{Lem}[Thm]{Lemma}
\newtheorem{Prop}[Thm]{Proposition}
\newtheorem{Cor}[Thm]{Corollary}
\theoremstyle{definition}
\newtheorem{Def}[Thm]{Definition}
\newtheorem{Ex}[Thm]{Example}
\theoremstyle{remark}
\newtheorem{Rem}[Thm]{Remark}
\renewcommand{\theprf}
\begin{document}
\begin{abstract}
We give analogues of the Auslander correspondence for two classes of triangulated categories satisfying certain finiteness conditions. The first class is triangulated categories with additive generators and we consider their endomorphism algebras as the Auslander algebras. For the second one, we introduce the notion of $[1]$-additive generators and consider their graded endormorphism algebras as the Auslander algebras. We give a homological characterization of the Auslander algebras for each class. Along the way, we also show that the algebraic triangle structures on the homotopy categories are unique up to equivalence.
\end{abstract}

\maketitle

\section{Introduction}\label{intro}
The main concern in representation theory of algebras is to understand the module categories. Among such categories, those with {\it{finitely many}} indecomposable objects, or equivalently the {\it{representation-finite}} algebras, are most fundamental. Let us recall the following famous theorem due to Auslander:
\begin{Thm}[\cite{A}, Auslander correspondence]\label{Au}
There exists a bijection between the set of Morita equivalence classes of finite dimensional algebras $\Lam$ of finite representation type and the set of Morita equivalence classes of finite dimensional algebras $\Gamma$ such that $\gd \Gamma \leq 2$ and $\dd \Gamma \geq 2$.
\end{Thm}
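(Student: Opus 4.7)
The plan is to construct mutually inverse maps explicitly. Given $\Lambda$ of finite representation type, one chooses a basic additive generator $M \in \md \Lambda$ (one copy of each indecomposable) and sets $\Gamma := \End_\Lambda(M)$. Conversely, given $\Gamma$ with $\gd \Gamma \leq 2$ and $\dd \Gamma \geq 2$, one picks an idempotent $e \in \Gamma$ such that $\Gamma e$ is a basic additive generator of the category of projective-injective $\Gamma$-modules, and sets $\Lambda := (e\Gamma e)^{\op}$, equivalently $\End_\Gamma(\Gamma e)^{\op}$.

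For the forward direction, I would use the standard functor $F := \Hom_\Lambda(M, -)$, which restricts to an equivalence $\add M \xrightarrow{\sim} \proj \Gamma$; by representation-finiteness $\add M = \md \Lambda$, so $F$ gives an equivalence $\md \Lambda \xrightarrow{\sim} \proj \Gamma$. To bound $\gd \Gamma \leq 2$, take any $X \in \md \Gamma$ with projective presentation $P_1 \to P_0 \to X \to 0$, lift it via the equivalence to a morphism $f \colon M_1 \to M_0$ in $\md \Lambda$, and observe that $\ker f \in \md \Lambda = \add M$, so $F(\ker f) \in \proj \Gamma$, giving a projective resolution $0 \to F(\ker f) \to F(M_1) \to F(M_0) \to X \to 0$ of length at most two. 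To show $\dd \Gamma \geq 2$, apply $F$ to a minimal injective copresentation $0 \to M \to E_0 \to E_1$ in $\md \Lambda$; since each indecomposable summand of $D\Lambda$ appears in $M$, we have $E_0, E_1 \in \add D\Lambda \subseteq \add M$, so the sequence $0 \to \Gamma \to F(E_0) \to F(E_1)$ has $F(E_i)$ projective, and a standard bimodule computation shows $F(E_i)$ is also injective.

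For the inverse direction, I would invoke the Morita-Tachikawa correspondence: algebras with $\dd \Gamma \geq 2$ correspond to pairs $(\Lambda, M)$ with $M$ a basic generator-cogenerator of $\md \Lambda$, via $\Gamma = \End_\Lambda(M)$ and $\Lambda = (e\Gamma e)^{\op}$ as above. It then remains to show that the additional hypothesis $\gd \Gamma \leq 2$ forces $\add M = \md \Lambda$, so that $\Lambda$ is representation-finite. Given any $N \in \md \Lambda$, one resolves $F(N) \in \md \Gamma$ by projectives $0 \to F(M_2) \to F(M_1) \to F(M_0) \to F(N) \to 0$, lifts the differentials to a two-term complex $M_2 \to M_1 \to M_0$ in $\add M$, and uses the fully faithfulness of $F$ on $\add M$ together with the generator property of $M$ to identify $N$ with the cokernel of $M_1 \to M_0$, thereby placing $N$ in $\add M$.

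The main obstacle is this last step: converting the homological bound $\gd \Gamma \leq 2$ into the categorical conclusion $\add M = \md \Lambda$. Once it is established, verifying that the two constructions are mutually inverse on Morita equivalence classes is routine: one tracks how the basic additive generator $M$ corresponds to the basic projective-injective generator $\Gamma e$ (specifically, the cogenerator summand $D\Lambda \subseteq M$ maps to the projective-injective part of $\Gamma$), which closes the bijection.
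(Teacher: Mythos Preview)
The paper does not prove Theorem~\ref{Au}; it is quoted in the introduction as classical background with a citation to \cite{A}, and the graded analogue in Section~\ref{easy} simply defers to \cite[VI.5]{ARS}. So there is no proof in the paper to compare your attempt against, and I comment only on the attempt itself.

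Your forward direction is correct and standard. The backward direction has a real gap at exactly the step you call the main obstacle. After lifting a length-two projective resolution of $F(N)$ to a complex $M_2 \to M_1 \to M_0$ in $\add M$, you ``identify $N$ with the cokernel of $M_1 \to M_0$, thereby placing $N$ in $\add M$''. The identification is fine, but the conclusion is a non-sequitur: since $M$ is a generator, \emph{every} $N \in \md\Lambda$ is the cokernel of some map in $\add M$, so this tells you nothing. In fact your argument never uses that $M$ is a cogenerator, and without that hypothesis the desired implication fails (take $M=\Lambda$ for any non-semisimple $\Lambda$ with $\gd\Lambda \leq 2$). The fix, as in \cite[VI.5]{ARS}, brings in the cogenerator side: an injective copresentation $0 \to N \to I^0 \to I^1$ has $I^i \in \add D\Lambda \subset \add M$, and left exactness of $F$ exhibits $F(N)$ as the kernel of a map between projective $\Gamma$-modules, hence projective since $\gd\Gamma \leq 2$; finally one checks that $F(N) \in \proj\Gamma$ forces $N \in \add M$, for instance by splitting the surjection $F(M_0) \twoheadrightarrow F(N)$ induced by a right $\add M$-approximation and using faithfulness of $F$.
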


This theorem states that a categorical property (=representation-finiteness) of $\md \Lam$ can be characterized by homological invariants (=$\gd$ and $\dd$) of $\Gamma$, called the {\it{Auslander algebra}} of $\md\Lam$. Such relationships between categorical properties of those appearing naturally in representation theory, and homological properties of their `Auslander algebras' has been established in \cite{Iy1,Iy2,E}.

The aim of this paper is to find an analogue of these results for triangulated categories \cite{Nee}. Let $k$ be an arbitrary field and $\T$ be a $k$-linear, $\rHom$-finite, idempotent-complete triangulated category. We consider two kinds of finiteness conditions on triangulated categories. 

The first one is a direct analogue of representation-finiteness:
$\T$ is {\it{finite}}, that is, $\T$ has finitely many indecomposable objects up to isomorphism.
In this case, $\T$ has an additive generator $M$. We call $\End_\T(M)$ the {\it{Auslander algebra}} of $\T$, which is uniquely determined by $\T$ up to Morita equivalence. The first main result of this paper is the following homological characterization of the Auslander algebras of triangulated categories. We say that a finite dimensional algebra $A$ is {\it twisted $n$-periodic} if it is self-injective and there exists an automorphism $\a$ of $A$ such that $\Om^n \simeq (-)_\a$ as functors on $\smd A$. We refer to Corollary \ref{twgss} for equivalent characterizations.
\begin{Thm}\label{intf}
Let $k$ be a perfect field. The following are equivalent for a basic finite dimensional $k$-algebra $A$:
\begin{enumerate}
\item\label{triang} $A$ is the Auslander algebra of a $k$-linear, $\rHom$-finite, idempotent-complete triangulated category which is finite.
\item\label{tw3} $A$ is twisted $3$-periodic.
\end{enumerate}
\end{Thm}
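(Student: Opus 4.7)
For the direction $(1) \Rightarrow (2)$, the plan is to fix an additive generator $M$ of $\T$ and set $A = \End_\T(M)$, so that the Yoneda functor $F := \Hom_\T(M,-)\colon \T \to \proj A$ is an equivalence. Since $\T$ is $\rHom$-finite over $k$ with finitely many indecomposables it admits a Serre functor, and under $F$ this transports to the Nakayama functor of $A$; invertibility of the Serre functor forces $\proj A = \inj A$, so $A$ is self-injective. The shift $[1]\colon \T \to \T$ is an autoequivalence of $\T \simeq \proj A$, and for basic $A$ any such autoequivalence is given by twisting with an automorphism $\a \in \mathrm{Aut}(A)$, so $F(X[1]) \cong F(X) \otimes_A A_\a$ and symmetrically $F(X[-1]) \cong F(X) \otimes_A A_{\a^{-1}}$.

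To extract the twisted $3$-periodicity of $A$, take any $X \in \md A$ and lift a projective presentation to a morphism $M_1 \to M_0$ in $\T$, which we embed in a triangle $M_2 \to M_1 \to M_0 \to M_2[1]$. Applying $F$ to this triangle yields a long exact sequence of projective $A$-modules that is infinite in both directions; reading leftward from $X$ produces a projective resolution
\[
\cdots \to F(M_2[-1]) \to F(M_1[-1]) \to F(M_0[-1]) \to F(M_2) \to F(M_1) \to F(M_0) \to X \to 0.
\]
By the identification of the previous paragraph, the terms satisfy $P_{i+3} \cong P_i \otimes_A A_{\a^{-1}}$, so the resolution is $3$-periodic up to this twist. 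Hence $\Om^3 X \cong X \otimes_A A_{\a^{-1}}$ naturally in $X$, which is precisely twisted $3$-periodicity of $A$.

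For the converse $(2) \Rightarrow (1)$, the aim is to manufacture a finite triangulated category realising a given twisted $3$-periodic algebra $A$ as its Auslander algebra. The plan is to use the twisted periodic projective bimodule resolution $0 \to A_\a \to Q_2 \to Q_1 \to Q_0 \to A \to 0$ to construct a DG (or $A_\infty$) algebra $\Lambda$ whose cohomology recovers $A$ together with the bimodule twist in degree $-3$, and then to take $\T$ as a suitable triangulated quotient of $\mathrm{per}\,\Lambda$ on which the twisted shift trivialises. The $3$-periodicity of $A$ forces this quotient to be $\rHom$-finite with only finitely many indecomposables, and makes $A$ appear as its Auslander algebra. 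The perfect field hypothesis is used to lift the bimodule data to an honest DG structure on $\Lambda$, and the uniqueness of algebraic triangle structures (a companion result in the paper) ensures the construction is canonical. The main obstacle lies here: the forward implication is a direct unwinding of triangle rotation, whereas the converse requires genuine homotopical machinery to realise an abstract bimodule periodicity as a bona fide triangulated structure with the prescribed Auslander algebra.
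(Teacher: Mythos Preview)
Your forward direction $(1)\Rightarrow(2)$ is essentially the paper's argument. The paper obtains self-injectivity more directly by quoting that $\md\T$ is a Frobenius category for any triangulated $\T$, whence $\md A\simeq\md\T$ is Frobenius and $A$ is self-injective; your Serre-functor route reaches the same conclusion but presupposes the existence of a Serre functor, which in the end is justified by the very same Frobenius property. The computation of $\Om^3$ via rotating triangles is identical.

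Your converse $(2)\Rightarrow(1)$ has a genuine gap: the DG/$A_\infty$ plan is both vague and unnecessary. You never specify $\Lambda$, the ``suitable triangulated quotient'', or why the result has finitely many indecomposables with Auslander algebra exactly $A$; as written this is a sketch of an aspiration, not a proof. The paper avoids all homotopical machinery by invoking Amiot's criterion (stated in the paper as Proposition~\ref{Ami}): given an exact sequence of exact endofunctors $0\to 1\to X^0\to X^1\to X^2\to S\to 0$ on $\md\P$ with the $X^i$ landing in projectives, the category $\P$ itself carries a triangle structure with suspension $S$. Twisted $3$-periodicity, via the Green--Snashall--Solberg type result (Corollary~\ref{twgss}), produces a bimodule exact sequence $0\to A\to P^0\to P^1\to P^2\to {}_1A_\a\to 0$ with projective $P^i$; tensoring gives exactly such a sequence of functors on $\md A$, so $\proj A$ is triangulated with suspension $(-)_\a$. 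That is the whole argument. Note also that Theorem~\ref{intf} does not ask for an \emph{algebraic} triangulated category, so there is no need to realise anything as $\mathrm{per}$ of a DG algebra; and the perfect-field hypothesis enters not in any DG lifting but in Corollary~\ref{twgss}, where separability of $A/J_A$ is needed to upgrade functorial periodicity $\Om^3\simeq(-)_\a$ to the bimodule periodicity $\Om_{A^e}^3(A)\simeq{}_1A_\a$.
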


This result shows a close connection between periodic algebras \cite{ES} and triangulated categories.
Our proof depends on Amiot's result (Proposition \ref{Ami}). This is a complement of Heller's classical observation \cite[16.4]{He} which gives a parametrization of pre-triangle structures on a pre-triangulated category $\T$ in terms of isomorphisms $\Om^3 \simeq [-1]$ on $\smd\T$. Later practice of this property of the third syzygy in representation theory can be seen in \cite{AR,Y2,Am,IO}.

Moreover, with some additional assumptions on $\T$, we give a bijection between finite triangulated categories and certain algebras, which is a more precise form of the above theorem; see Theorem \ref{std}.

The second finiteness condition is the following:
\begin{enumerate}
\def\theenumi{(S{\arabic{enumi}})}
\def\labelenumi{\theenumi}
\item\label{shift} There is an object $M \in \T$ such that $\T=\add\{M[n] \mid n \in \Z \}$.
\item\label{simconn} For any $X,Y \in \T$, $\Hom_\T(X,Y[n])=0$ holds for almost all $n$.
\end{enumerate}

If these conditions are satisfied, we say $\T$ is {\it{$[1]$-finite}} and call $M$ as in \ref{shift} a {\it{$[1]$-additive generator}}. For example, the bounded derived categories of representation-finite hereditary algebras are $[1]$-finite, and additive generators for module categories are $[1]$-additive generators for the derived categories. There are various studies on $[1]$-finite triangulated categories, for example \cite{Ro, XZ, Am}. Note that $[1]$-finite triangulated categories have infinitely many indecomposable objects unless $\T=0$.


For a $[1]$-finite triangulated category $\T$ with a $[1]$-additive generator $M$, we call
\[ C=\bigoplus_{n \in \Z}\Hom_\T(M,M[n]) \]
the {\it{$[1]$-Auslander algebra}} of $\T$, which is naturally a $\Z$-graded algebra and is uniquely determined by $\T$ up to graded Morita equivalence. Thanks to our condition \ref{simconn}, $C$ is finite dimensional. To study it, we prepare some results on `graded projectivization' in Section \ref{easy} (see Proposition \ref{efu}). Such constructions of graded algebras appear naturally in various contexts \cite{AZ,As}.

Our second main result is the Auslander correspondence for $[1]$-finite triangulated categories. To state it, we have to restrict to a nice class of triangulated categories called {\it algebraic}. Recall that they are the stable categories of Frobenius categories \cite[I.2.6]{Hap}. Algebraic triangulated categories are enhanced by differential graded categories \cite{Ke2}, and play a central role in tilting theory \cite{AHK}.

Now we can formulate the following second main result of this paper in terms of algebraic triangulated categories and graded algebras.
We say that a finite dimensional $\Z$-graded algebra $A$ is {\it $(a)$-twisted $n$-periodic} if it is self-injective and there exists a graded automoprhism $\a$ of $A$ such that $P_\a \simeq P$ for all $P \in \proj^\Z\!A$ and $\Om^n \simeq (-)_\a(a)$ as functors on $\smd^\Z\!A$. We refer to Corollary \ref{grgss} for equivalent conditions.
\begin{Thm}\label{int1}
Let $k$ be an algebraically closed field. There exists a bijection between the following.
\begin{enumerate}
\item\label{T} The set of triangle equivalence classes of $k$-linear, $\rHom$-finite, idempotent-complete, algebraic triangulated categories $\T$ which are $[1]$-finite.
\item\label{A} The graded Morita equivalence classes {\rm (}see Definition {\rm \ref{grmo})} of finite dimensional graded $k$-algebra $C$ which are $(-1)$-twisted $3$-periodic.
\item\label{Dyn} A disjoint union of Dynkin diagrams of type A, D, and E.
\end{enumerate}
The correspondences are given as follows:
\begin{itemize}
\item From {\rm ({\ref{T}})} to {\rm ({\ref{A}})}: Taking the $[1]$-Auslander algebra of $\T$.
\item From {\rm (\ref{T})} to {\rm (\ref{Dyn})}: Taking the tree type of the AR-quiver of $\T$.
\item From {\rm ({\ref{A}})} to {\rm ({\ref{T}})}: $C \mapsto \proj^\Z\!C$.
\item From {\rm (\ref{Dyn})} to {\rm (\ref{T})}: $Q \mapsto k(\Z Q)$, where $k(\Z Q)$ is the mesh category associated with $\Z Q$.
\end{itemize}
\end{Thm}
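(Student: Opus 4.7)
The plan is to construct the four explicit assignments of the statement, verify they are well-defined on the appropriate equivalence classes, and show they compose to identities; the heart is the bijection \ref{T}$\leftrightarrow$\ref{A}, while \ref{Dyn} is read off the shape of the Auslander--Reiten quiver.

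For \ref{T}$\Rightarrow$\ref{A}, I would start with $\T$ as in \ref{T} and a $[1]$-additive generator $M$, and form the $[1]$-Auslander algebra $C$. Condition \ref{simconn} guarantees that $C$ is finite dimensional. By the graded projectivization developed in Section \ref{easy} (Proposition \ref{efu}), the functor $X \mapsto \bigoplus_n \Hom_{\T}(M,X[n])$ induces an equivalence of additive categories $\T \simeq \proj^{\Z}\!C$, under which the shift $[1]$ on $\T$ becomes the grading shift $(-1)$ on $\proj^{\Z}\!C$. A graded analogue of Heller/Amiot (Proposition \ref{Ami} applied to the transported triangle structure) then yields both self-injectivity of $C$ and $\Om^3 \simeq (-)_\a(-1)$ on $\smd^{\Z}\!C$ for some graded automorphism $\a$, i.e.\ $C$ is $(-1)$-twisted $3$-periodic.

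For \ref{A}$\Rightarrow$\ref{T}, given such a $C$ I would endow $\proj^{\Z}\!C$ with a triangulation whose shift is the grading shift $(-1)$: distinguished triangles come from minimal graded projective presentations, which close up after three steps by the twisted $3$-periodicity. To verify algebraicity I would realize $\proj^{\Z}\!C$ as a thick subcategory of a graded Frobenius enhancement of $C$ (for instance a suitable twisted trivial extension of $C$ by a shifted Nakayama bimodule). The two maps \ref{T}$\leftrightarrow$\ref{A} are then mutually inverse by Proposition \ref{efu} at the additive level, together with the uniqueness of algebraic triangle structures promised in the abstract.

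The bridge to \ref{Dyn} is the AR quiver. Under Krull--Schmidt and $[1]$-finiteness the AR quiver of $\T$ is a disjoint union of translation quivers of the form $\Z Q$; Hom-finiteness combined with the algebraic enhancement forces $Q$ to be Dynkin of type $\mathbb{A},\mathbb{D},\mathbb{E}$, since any non-Dynkin tree would yield non-zero Hom spaces in infinitely many degree shifts or admit no compatible triangle structure. Conversely, for $Q$ Dynkin, $k(\Z Q) \simeq \der(kQ)$ by Happel's theorem clearly lies in \ref{T} with AR quiver $\Z Q$. The commutativity of the resulting triangle of bijections reduces once again to the uniqueness of algebraic triangle structures on $\proj^{\Z}\!C$, and this is the main technical obstacle: concretely one must lift $C$ to a minimal dg algebra enhancing $\T$, prove formality via the twisted $3$-periodicity, and invoke uniqueness of dg enhancements for ADE-type triangulated categories in the spirit of Keller. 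The algebraically closed hypothesis on $k$ enters precisely here, where one identifies abstract graded self-equivalences of $C$ with inner twists by $\a$.
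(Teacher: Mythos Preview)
Your overall architecture matches the paper: \ref{T}$\to$\ref{A} via graded projectivization plus the observation that triangles give projective resolutions (this is exactly Proposition \ref{easy one}), \ref{A}$\to$\ref{T} via Amiot's construction, and the link to \ref{Dyn} via the shape of the AR-quiver. The divergences are in the technical implementation of the latter two steps, and there your sketch has real gaps.

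For \ref{A}$\to$\ref{T}, the paper does not produce algebraicity by embedding $\proj^\Z\!C$ into a Frobenius enhancement built from a twisted trivial extension; nor does it prove uniqueness by dg formality and uniqueness of enhancements. Proposition \ref{not easy one} first applies Corollary \ref{grgss} and Proposition \ref{Ami} to obtain \emph{some} triangle structure on $\proj^\Z\!C$ (not a priori algebraic), checks it is $[1]$-finite, and then invokes the classification: Propositions \ref{AR} and \ref{Ri} identify the underlying additive category with $k(\Z Q)\simeq\K(\proj kQ)$, which already carries its canonical algebraic structure. Uniqueness is then Theorem \ref{unique}, whose proof is entirely elementary: in a purely non-semisimple Krull--Schmidt category, cones and the suspension are determined objectwise by iterated minimal weak cokernels (Lemma \ref{rmin}, Proposition \ref{aaaa}), so a tilting object for one structure is tilting for any other (Lemma \ref{tilt}), and Keller's classical result (Proposition \ref{Kel}) finishes. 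Your formality route might be made to work, but it is substantially harder and you have not indicated how to carry it out; the paper avoids it completely.

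Two smaller corrections. The algebraically closed hypothesis is not used to identify graded self-equivalences of $C$ with twists by $\a$; it enters through the Xiao--Zhu classification (Proposition \ref{AR}) and Riedtmann's standardness (Proposition \ref{Ri}), which are also the actual reason $Q$ is Dynkin, not an ad hoc Hom-finiteness argument. And a sign: under Proposition \ref{efu} one has $P_{Z[1]}=P_Z(1)$, so $[1]$ corresponds to the grade shift $(1)$, not $(-1)$; the $(-1)$ appears because $\Om^3 L\simeq L(-1)$.
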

Moreover, we have the following explicit descriptions of (\ref{T}) and (\ref{A}) in the above theorem.
\begin{Thm}[Theorem \ref{cor}, Proposition \ref{easy one}]\label{desc}
The classes {\rm (\ref{T})} and {\rm (\ref{A})} in Theorem \ref{int1} are the same as {\rm (\ref{T}$^\prime$)} and {\rm (\ref{A}$^\prime$)}, respectively. 
\begin{enumerate}
\renewcommand{\labelenumi}{(\arabic{enumi}$^\prime$)}
\item The set of triangle equivalence classes of the bounded derived categories $\der(\md kQ)$ of the path algebra $kQ$ for a disjoint union $Q$ of Dynkin quivers of type A, D, and E.
\item The orbit algebras $k(\Z Q)/[1]$ for a disjoint union $Q$ of Dynkin quivers of type A, D, and E.
\end{enumerate}
\end{Thm}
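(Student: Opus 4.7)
The plan is to leverage Theorem \ref{int1}, which already bijects (\ref{T}) with the Dynkin data (\ref{Dyn}) via $Q\mapsto k(\Z Q)$. Thus it suffices to identify the mesh category $k(\Z Q)$ with $\der(\md kQ)$ for (\ref{T})=(\ref{T}$^\prime$), and to identify its $[1]$-Auslander algebra with the orbit algebra $k(\Z Q)/[1]$ for (\ref{A})=(\ref{A}$^\prime$).

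For (\ref{T})=(\ref{T}$^\prime$), I would first check that $\der(\md kQ)$ with $Q$ Dynkin lies in (\ref{T}). It is clearly $k$-linear, $\rHom$-finite, idempotent-complete, and algebraic. Since $kQ$ is hereditary and representation-finite, every object of $\der(\md kQ)$ decomposes as a direct sum of stalk complexes $M[n]$ with $M \in \md kQ$; so an additive generator $M_0$ of $\md kQ$ serves as a $[1]$-additive generator of $\der(\md kQ)$, verifying \ref{shift}, while \ref{simconn} follows from the vanishing of $\Hom_{\der(\md kQ)}(X,Y[n])$ for $X,Y \in \md kQ$ and $n\notin\{0,1\}$. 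Conversely, any $\T$ in (\ref{T}) is by Theorem \ref{int1} triangle equivalent to $k(\Z Q)$ for some Dynkin $Q$, and by Happel's classical theorem $k(\Z Q)$ is triangle equivalent to $\der(\md kQ)$ for any orientation of the underlying Dynkin diagram. Hence (\ref{T})=(\ref{T}$^\prime$).

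For (\ref{A})=(\ref{A}$^\prime$), the bijection of Theorem \ref{int1} sends $\T$ to the graded $[1]$-Auslander algebra $\bigoplus_n \Hom_\T(M,M[n])$ of a $[1]$-additive generator $M$. Transporting along $\T\simeq k(\Z Q)$, this is precisely the graded endomorphism algebra of $M$ in the orbit category $k(\Z Q)/[1]$, namely the orbit algebra in (\ref{A}$^\prime$); the cited Proposition \ref{easy one} should package this identification cleanly (and verify that the resulting graded algebra is independent of the choices up to graded Morita equivalence).

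The main potential obstacle is the appeal to Happel's equivalence $k(\Z Q)\simeq\der(\md kQ)$ compatibly with the chosen $[1]$-additive generator, but this is a classical input one can invoke directly. Beyond that, the theorem is essentially an unwinding of the bijections of Theorem \ref{int1} through the standard mesh-category model.
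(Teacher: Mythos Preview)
Your approach is essentially the one the paper takes: the theorem is just a repackaging of Theorem~\ref{cor} (for $(\ref{T})=(\ref{T}')$) together with the definition of the $[1]$-Auslander algebra as an orbit-category endomorphism algebra (for $(\ref{A})=(\ref{A}')$), and you correctly route through Theorem~\ref{int1}, which itself rests on Theorem~\ref{cor}.

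One point deserves care. You write that ``by Happel's classical theorem $k(\Z Q)$ is triangle equivalent to $\der(\md kQ)$.'' Happel's result \cite[I.5.6]{Hap}, as used in the paper, is only a $k$-linear equivalence $k(\Z Q)\simeq \K(\proj kQ)$; the mesh category has no canonical triangle structure, so this cannot by itself be a \emph{triangle} equivalence. The upgrade to a triangle equivalence is precisely the content of Corollary~\ref{mesh} (via Theorem~\ref{unique}): any algebraic triangle structure on $k(\Z Q)$ is equivalent to the one on $\K(\proj kQ)=\der(\md kQ)$. Since you have already invoked Theorem~\ref{int1}, whose bijection $(\ref{Dyn})\to(\ref{T})$ is established through exactly this uniqueness, your argument is not circular and the step is available to you; just cite Corollary~\ref{mesh} (or Theorem~\ref{cor} directly) rather than Happel alone. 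With that adjustment your proof matches the paper's.
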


Comparing with Theorem \ref{intf}, Theorem \ref{int1} is more strict in the point that the Auslander algebras $C$ corresponds {\it{bijectively}} to the triangulated categories.
This can be done by the classification of $[1]$-finite triangulated categories as is stated in ($\ref{T}^\prime$). These results suggest that $[1]$-finite triangulated categories are easier than finite ones in controlling their triangle structures as well as their additive structures.

Our classification is deduced from the following uniqueness of the triangle structures on the homotopy categories.
\begin{Thm}[Theorem \ref{unique}]\label{kb}
Let $\Lam$ be a ring such that $\K(\proj\Lam)$ is a Krull-Schmidt category and $\Lam$ does not have a semisimple ring summand, and let $\C$ be an algebraic triangulated category. If $\C$ and $\K (\proj\Lam)$ are equivalent as additive categories, then they are equivalent as triangulated categories.
\end{Thm}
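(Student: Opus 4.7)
The overall strategy is to upgrade the additive equivalence to a triangle equivalence by identifying $\C$ with the perfect derived category of $\Lam$ via a dg-categorical reconstruction. Since $\C$ is algebraic, fix a pretriangulated dg enhancement $\A$ with $H^0(\A)\simeq\C$. Writing $F:\K(\proj\Lam)\to\C$ for the given additive equivalence, set $M=F(\Lam)\in\C$, lift $M$ to an object $\tilde M\in\A$, and form the dg endomorphism algebra $B=\rHom_\A(\tilde M,\tilde M)$. By Keller's reconstruction theorem for compact generators, the dg functor $\rHom_\A(\tilde M,-)$ induces a triangle equivalence between the thick subcategory $\langle M\rangle\subseteq\C$ generated by $M$ and the perfect derived category $\mathrm{per}(B)$.

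With this in place, it would suffice to verify two things. First, that $B$ is formal, with $H^*(B)=\Lam$ concentrated in degree $0$: the additive equivalence immediately identifies $H^0(B)=\End_\C(M)$ with $\End_{\K(\proj\Lam)}(\Lam)=\Lam$, and the substance is the vanishing $H^n(B)=\Hom_\C(M,M[n])=0$ for $n\ne 0$. Once formality is established, $\mathrm{per}(B)$ is canonically identified with $\mathrm{per}(\Lam)=\K(\proj\Lam)$. Second, that $\langle M\rangle=\C$: the resulting fully faithful triangle functor $\K(\proj\Lam)\to\C$ sends $\Lam$ to $M$ and thus agrees with $F$ on $\add\Lam$ up to isomorphism, so essential surjectivity of $F$ combined with thickness of the image forces $\langle M\rangle$ to exhaust $\C$.

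The critical obstacle is the formality step, and this is where both hypotheses enter. The shift on $\C$ is intrinsic to its triangle structure and bears no a priori relation to the additive operation $F\circ[1]\circ F^{-1}$; in particular the object $M[n]\in\C$ need not be isomorphic to $F(\Lam[n])$. The assumption that $\Lam$ has no semisimple ring summand is essential: over a semisimple factor, $\K(\proj)$ reduces to $\Z$-graded vector spaces, where the shift acts freely on iso-classes of indecomposables and can be re-indexed arbitrarily to produce genuinely distinct triangle structures on the same additive category. The Krull-Schmidt hypothesis ensures a well-behaved decomposition into indecomposables, and the plan to extract the vanishing is to compare the permutations of iso-classes of indecomposables induced respectively by the triangulated shift on $\C$ and by $F\circ[1]\circ F^{-1}$. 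The non-trivial morphisms arising from the non-semisimple part of $\Lam$ constrain both permutations so strongly that, after modifying $F$ by an additive auto-equivalence of $\C$, the two shifts can be made to agree on the additive subcategory generated by $\{\Lam[n]\mid n\in\Z\}$; the desired vanishing $\Hom_\C(M,M[n])=0$ for $n\ne 0$ then transfers directly from the corresponding vanishing in $\K(\proj\Lam)$, completing formality and with it the proof.
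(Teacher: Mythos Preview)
Your overall strategy coincides with the paper's: transport $\Lam$ along the additive equivalence to obtain $M\in\C$, show that $M$ is a tilting object, and invoke Keller's theorem (Proposition~\ref{Kel}) to conclude $\C\simeq\K(\proj\Lam)$ as triangulated categories. The genuine gap is that you have not supplied the mechanism forcing $\Hom_\C(M,M[n])=0$ for $n\ne0$ and $\langle M\rangle=\C$; you only assert that radical morphisms ``constrain both permutations so strongly'' that the shifts can be reconciled after adjusting $F$, without saying how.

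The paper fills this gap with a concrete additive characterization of the shift and the cone. For a right minimal radical morphism $f\colon X\to Y$ in a Krull-Schmidt triangulated category, the mapping cone of $f$ is the minimal weak cokernel of $f$, and $X[1]$ is the minimal weak cokernel of that (Lemma~\ref{rmin}). Since $\C$ is purely non-semisimple---this is exactly where the no-semisimple-summand hypothesis enters---every indecomposable $X$ admits such an $f$, so both $X[1]$ and $\mathrm{cone}(g)$ for arbitrary $g$ are determined up to isomorphism by the additive structure alone (Proposition~\ref{aaaa}). It follows immediately that being a tilting object is an additive invariant (Lemma~\ref{tilt}): both the vanishing $\Hom_\C(T,T[n])=0$ and thick generation transfer from $\K(\proj\Lam)$ to $\C$. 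No modification of $F$ by an auto-equivalence is needed, and one never compares the two shifts as \emph{functors}---object-wise agreement suffices.

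Your generation argument is also incomplete as written: knowing that the triangle functor $\K(\proj\Lam)\to\C$ agrees with $F$ on $\add\Lam$ does not by itself force its essential image to be all of $\C$, since $F$ is only additive and need not carry a cone decomposition of a general complex into $\langle M\rangle$. The paper sidesteps this entirely: once cones are additively determined, generation in one triangle structure is equivalent to generation in the other.
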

For example, $\K(\proj\Lam)$ is Krull-Schmidt if $\Lam$ is a module-finite algebra over a complete Noetherian local ring.
We actually see that the possible triangle structure on a given Krull-Schmidt additive category is unique in the sense that the suspensions and the mapping cones are uniquely determined as objects, see Proposition \ref{aaaa} for details.

As an application of our classification Theorem \ref{desc} of $[1]$-finite triangulated categories, we recover the main result of \cite{CYZ} stating that any finite dimensional algebra over an algebraically closed field with derived dimension 0 is piecewise hereditary of Dynkin type.

We also apply Theorem \ref{desc} to Cohen-Macaulay representation theory.
A rich source of $[1]$-finite triangulated categories is given by CM-finite Iwanaga-Gorenstein algebras \cite{CR1, CR2, LW, S, Y}, for example, simple singularities and trivial extension algebras of representation-finite hereditary algebras. We consequently obtain the following result, which states that $\sCM^\Z\!\Lam$ is triangle equivalent to the derived category of a Dynkin quiver under some mild assumptions.
\begin{Cor}[Theorem \ref{CM}]\label{appli}
Let $k$ be an algebraically closed field and $\Lam=\bigoplus_{n \geq 0}\Lam_n$ be a positively graded CM-finite Iwanaga-Gorenstein algebra such that each $\Lam_n$ is finite dimensional over $k$ and $\Lam_0$ has finite global dimension. Then, the stable category $\sCM^\Z\!\Lam$ is $[1]$-finite and therefore, it is triangle equivalent to $\der(\md kQ)$ for a disjoint union $Q$ of some Dynkin quivers of type A, D, and E.
\end{Cor}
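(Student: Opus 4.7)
The plan is to verify that $\T := \sCM^\Z\!\Lam$ meets the conditions of item (\ref{T}) in Theorem \ref{int1}: that it is a $k$-linear, $\rHom$-finite, idempotent-complete, algebraic triangulated category which is $[1]$-finite. The desired triangle equivalence $\T \simeq \der(\md kQ)$ will then follow at once from Theorem \ref{int1} combined with Theorem \ref{desc}. The four ``easy'' properties are standard under our hypotheses: $\CM^\Z\!\Lam$ is a Frobenius exact category whose projective--injective objects are the finitely generated graded projective $\Lam$-modules, so by \cite{Hap} its stable category $\T$ is algebraic triangulated; $\rHom$-finiteness follows from $\dim_k\Lam_n<\infty$, idempotent-completeness is inherited from $\CM^\Z\!\Lam$, and $k$-linearity is immediate.

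The heart of the argument is the verification of \ref{shift} and \ref{simconn}. For \ref{simconn}, positivity of the grading together with $\gd\Lam_0<\infty$ yields uniform bounds on how the graded degree support of a CM module shifts under one application of $[1]$ or $[-1]$ (taking (co)syzygies inside $\CM^\Z\!\Lam$). Iterating, the degree support of $Y[n]$ drifts out of any bounded window as $|n|\to\infty$, and since $\Hom$ between graded modules with disjoint degree supports vanishes, $\Hom_\T(X,Y[n])=0$ for almost all $n$. For \ref{shift}, let $M_1,\dots,M_r$ be a complete set of non-projective indecomposables of $\CM\Lam$. Using positive grading and $\gd\Lam_0<\infty$, each $M_i$ admits a graded lift $\tilde M_i \in \CM^\Z\!\Lam$ (by lifting a minimal projective cover inductively over the degree filtration), and the forgetful functor $\CM^\Z\!\Lam \to \CM\Lam$ has the property that every indecomposable in its source is of the form $\tilde M_i(n)$ for some $i$ and $n\in\Z$. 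This reduces the problem to comparing the grading-shift $\Z$-action $(1)$ with the suspension $\Z$-action $[1]$ on the set of indecomposables of $\T$. The graded Nakayama automorphism of the Frobenius category $\CM^\Z\!\Lam$ controls this comparison: the injective envelopes of graded CM modules are shifts of $\Lam$, so $[1]$ acts on the AR-quiver of $\T$ by an automorphism that agrees with $(1)$ up to a functor of finite order permuting the $\tilde M_i$'s. Consequently finitely many grading shifts of $\tilde M_1,\dots,\tilde M_r$ form a $[1]$-additive generator.

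The main obstacle is this last comparison between $[1]$ and $(1)$, since the suspension in the stable category of a Frobenius category is \emph{a priori} independent of any auxiliary grading shift; its success here rests on the specific shape of the injective envelopes in $\CM^\Z\!\Lam$, which is rigidly controlled by positivity of the grading and by $\gd\Lam_0<\infty$. Once \ref{shift} and \ref{simconn} are in place, Theorem \ref{int1} together with Theorem \ref{desc} identifies $\T$ with $\der(\md kQ)$ for some disjoint union $Q$ of Dynkin quivers of type A, D, or E, as claimed.
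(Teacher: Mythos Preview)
Your overall plan---verify \ref{shift} and \ref{simconn} for $\T=\sCM^\Z\!\Lam$ and then invoke the classification---is exactly the paper's route (Proposition~\ref{sing} plus Theorem~\ref{cor}). However, both of your sketched verifications have genuine gaps.

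For \ref{simconn}, your degree-drift argument is essentially the content of Lemma~\ref{nex} in \emph{one} direction: for $[-1]=\Om$, the minimal graded projective resolution of $Y$ has its generating degrees tending to $+\infty$ (using $\gd\Lam_0<\infty$), whence $\Hom_\Lam(X,\Om^nY)_0=0$ for $n\gg 0$. But this does \emph{not} handle $[1]=\Om^{-1}$: cosyzygies in $\CM^\Z\!\Lam$ are taken via injective envelopes, and neither positivity nor $\gd\Lam_0<\infty$ forces their support to drift out of a fixed window; for $\Lam$ of positive Krull dimension the supports are unbounded above anyway, so your ``disjoint degree supports'' criterion does not even apply literally. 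The paper treats this direction by a different mechanism: CM-finiteness gives AR duality on $\sCM^\Z\!\Lam$, and $D\sHom_\Lam(X,\Om^nY)_0\simeq\sHom_\Lam(Y,\Om^{-n-1}\tau X)_0$ reduces the case $n\ll 0$ to the case $n\gg 0$ already handled.

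For \ref{shift}, your claim that ``$[1]$ agrees with $(1)$ up to a functor of finite order'' is unsubstantiated and in general false: one only knows $\Om^n\tilde M_i\simeq\tilde M_i(a_i)$ for some uniform $n>0$ (since $\Om$ permutes the finitely many grade-shift classes), with no control on the individual $a_i$. The paper's argument is different and crucially \emph{uses} \ref{simconn} to conclude $a_i\neq 0$: otherwise $\sHom_\Lam(\tilde M_i,\Om^{kn}\tilde M_i)_0\neq 0$ for all $k$, contradicting \ref{simconn}. From $a_i\neq 0$ each grade-shift orbit $\{\tilde M_i(m):m\in\Z\}$ decomposes into $|a_i|$ many $\Om^n$-orbits, hence there are finitely many $[1]$-orbits overall. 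Your Nakayama/injective-envelope heuristic does not supply the key step $a_i\neq 0$, and the detour through ungraded $\CM\Lam$ is unnecessary since the paper's CM-finiteness hypothesis is already phrased for $\CM^\Z\!\Lam$.
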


This partially recovers \cite{KST}, \cite[2.1]{BIY} in a quite different way. Note that our result is more general, but less explicit in the sense that Corollary \ref{appli} does not give the type of $Q$ from given $\Lam$.

As this application suggests, our classification shows that the `easiest' triangulated categories are very likely to be the derived category of Dynkin quivers, and provides a completely different method (from a direct construction of tilting objects) of giving a triangle equivalence for such categories.

\subsection*{Acknowledgement}
The author is deeply grateful to his supervisor Osamu Iyama for many helpful suggestions and careful instructions.

\subsection*{Notations and conventions}\label{conv}
We denote by $k$ a field. 
For a category $\C$, we denote by $\Hom_\C(-,-)$ or simply $\C(-,-)$ the $\rHom$-spaces between the objects and by $J_\C(-,-)$ the Jacobson radical of $\C$. A $\C$-module is a contravariant functor from $\C$ to the category of abelian groups. A $\C$-module $M$ is finitely presented if there is an exact sequence $\C(-,X) \to \C(-,Y) \to M \to 0$ for some $X,Y \in \C$. We denote by $\md\C$ the category of finitely presented $\C$-modules. If $\C$ is graded by a group $G$, the category of finitely presented graded functor is denoted by $\md^G\!\C$, and its projectives by $\proj^G\!\C$. The morphism space in $\md^G\!\C$ is denoted by $\Hom_\C(-,-)_0$ or $\C(-,-)_0$. The category $\md^G\!\C$ is endowed with the grade shift functor $(g)$ for each $g \in G$, defined by $M(g)=M$ as an ungraded module and $(M(g)(X))_h=(MX)_{gh}$ for each $X \in \C$.

Similarly, for a $k$-algebra $A$, the Jacobson radical of $A$ is denoted by $J_A$. A module over $A$ means a finitely generated right module. We denote by $\md A$ (resp. $\proj A$) the category of (projective) $A$-modules. If $A$ is graded, the category of graded (projective) $A$-modules is denoted by $\md^G\!A$ (resp. $\proj^G\!A$).

\section{Periodicity of syzygies}\label{not easy}
Let $A$ be a $k$-algebra. We denote by $A^e$ the enveloping algebra $A^\op \otimes_k A$ and by $\Om_A$ (resp. $\Om_{A^e}$) the syzygy, that is, the kernel of the projective cover in $\md A$ (resp. $\md A^e$).
In this section, we generalize for our purpose the result of Green-Snashall-Solberg \cite{GSS} which relates the periodicity of syzygy of simple $A$-modules and that of $A$ considered as a bimodule over itself.
The following theorem and its proof is a graded and twisted version of \cite[1.4]{GSS}.
\begin{Thm}\label{GSS}
Let $G$ be an abelian group and $A$ be a finite dimensional, ring-indecomposable, non-semisimple $G$-graded $k$-algebra. Assume that $J_A=J_{A_0} \oplus (\bigoplus_{i \neq 0} A_i)$ and that $A/J_A$ is separable over $k$. Then, the following are equivalent for $a \in G$ and $n>0$.
\begin{enumerate}
\item\label{sim} $\Om^n_A(A/J_A) \simeq A/J_A(a)$ in $\md^G\!A$.
\item\label{twisted} $A$ is self-injective and there exists a graded algebra automorphism $\a$ of $A$ such that $\Om^n \simeq (-)_\a(a)$ as functors on $\smd^G\!A$.
\item\label{bi} There exists a graded algebra automorphism $\a$ of $A$ such that $\Om^n_{A^e}(A) \simeq {}_1A_\a(a)$ in $\md^G\!A^e$. 
\end{enumerate}
\end{Thm}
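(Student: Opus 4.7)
The plan is to follow the strategy of Green--Snashall--Solberg \cite{GSS}, lifting all constructions to the graded setting and carefully tracking the twisting automorphism $\a$. I would prove $(\ref{twisted}) \Rightarrow (\ref{sim}) \Rightarrow (\ref{bi}) \Rightarrow (\ref{twisted})$.

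The implication $(\ref{twisted}) \Rightarrow (\ref{sim})$ is immediate: apply the natural isomorphism $\Om^n \simeq (-)_\a(a)$ to the module $A/J_A$. Since $\a$ is a graded algebra automorphism it preserves $J_A$ and merely permutes the isomorphism classes of graded simple summands, so $(A/J_A)_\a \simeq A/J_A$ in $\md^G\!A$.

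For $(\ref{sim}) \Rightarrow (\ref{bi})$, the core of the argument, I would construct the minimal graded projective $A^e$-resolution of $A$ using the separability of $A/J_A$. The grading hypothesis $J_A = J_{A_0} \oplus \bigoplus_{i \neq 0} A_i$ places $A/J_A$ entirely in degree $0$, so that separability guarantees that the graded projective $A^e$-cover of $A$ is the multiplication map $\mu\colon A \otimes_{A/J_A} A \to A$ and that every subsequent term of the minimal bimodule resolution has the form $A \otimes_{A/J_A} V_k \otimes_{A/J_A} A$ for certain graded $(A/J_A)$-bimodules $V_k$. The crucial feature, built into this construction, is that extending scalars by $- \otimes_A A/J_A$ on either side recovers the minimal graded projective $A$-resolution of $A/J_A$. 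Hence the hypothesis $\Om^n_A(A/J_A) \simeq A/J_A(a)$ lifts to $\Om^n_{A^e}(A) \otimes_A A/J_A \simeq A/J_A(a)$ and symmetrically $A/J_A \otimes_A \Om^n_{A^e}(A) \simeq A/J_A(a)$. A graded $A$-bimodule whose tops on both sides equal $A/J_A(a)$ and whose $k$-dimension matches that of $A(a)$ (by matching ranks of projectives in the resolution) is necessarily of the form ${}_1A_\a(a)$ for a uniquely determined graded algebra automorphism $\a$, which records the mismatch between the two identifications.

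For $(\ref{bi}) \Rightarrow (\ref{twisted})$, I tensor the minimal projective bimodule resolution $Q_\bullet \to A$ with any $M \in \md^G\!A$ over $A$. Since projective $A^e$-modules are projective on each side and $A$ is flat over itself, $M \otimes_A Q_\bullet \to M$ is a graded projective $A$-resolution of $M$, giving a natural isomorphism $\Om^n_A(M) \simeq M \otimes_A \Om^n_{A^e}(A) \simeq M \otimes_A {}_1A_\a(a) \simeq M_\a(a)$ in $\smd^G\!A$. Specializing $M = A/J_A$ yields $(\ref{sim})$, so each graded simple appears as an $n$-th syzygy and hence embeds in a graded projective; combined with the ring-indecomposability of $A$ and the separability of $A/J_A$, this is known to force self-injectivity by the standard argument of \cite{GSS}.

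The main obstacle is the construction in $(\ref{sim}) \Rightarrow (\ref{bi})$: one must carefully verify that the bimodule resolution built from the one-sided data has precisely the predicted terms, so that $\Om^n_{A^e}(A)$ is determined as a bimodule up to isomorphism, and that the twist $\a$ defined from the left/right mismatch is indeed a graded algebra automorphism and not merely a graded bimodule isomorphism. The separability of $A/J_A$ is used throughout, both to exclude correction terms in the tensor products $A \otimes_{A/J_A} -$ and to identify the bimodule syzygies cleanly with their one-sided counterparts.
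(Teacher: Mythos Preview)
Your strategy is essentially the same as the paper's: both follow Green--Snashall--Solberg, proving the cycle $(\ref{twisted})\Rightarrow(\ref{sim})\Rightarrow(\ref{bi})\Rightarrow(\ref{twisted})$ with $(\ref{sim})\Rightarrow(\ref{bi})$ as the main step. The paper's execution of that step is a bit leaner than yours. Rather than building the explicit bar-type resolution $A\otimes_{A/J_A}V_k\otimes_{A/J_A}A$, the paper simply takes the minimal $A^e$-resolution $P_\bullet\to A$, sets $B=\Om^n_{A^e}(A)$, and shows directly that $S\otimes_A P_\bullet$ is minimal for each graded simple $S$ (using $J_{A^e}=J_A\otimes A+A\otimes J_A$ from separability). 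This gives $S\otimes_A B\simeq\Om^n_A(S)$ simple, so $-\otimes_A B$ preserves length; then the surjection $B\to A/J_A(a)$ forces $B\simeq A(a)$ as a \emph{right} $A$-module. The automorphism $\a$ is then read off from the left action on this right-module isomorphism, and its injectivity is checked via the length-preservation of $-\otimes_A{}_\a A$.

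One point in your outline to watch: your claim that ``symmetrically $A/J_A\otimes_A\Om^n_{A^e}(A)\simeq A/J_A(a)$'' does not follow from the hypothesis, which concerns only right syzygies; the left-module top of $B$ is not a priori determined. The paper avoids this by using only the right-module identification $B\simeq A(a)$ together with left-projectivity of $B$ to show $\a$ is injective. Your dimension-matching remark is also slightly misplaced: what pins down $B\simeq A(a)$ on the right is not a dimension count but projectivity of $B$ plus the computation of its top. These are easily repaired and do not affect the overall correctness of your plan.
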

\begin{proof}
By the original case, we have that $A$ is self-injective under the assumption (\ref{bi}). Then the implication (\ref{bi}) $\Rightarrow$ (\ref{twisted}) follows. Also, (\ref{twisted}) $\Rightarrow$ (\ref{sim}) is clear.

It remains to prove ({\ref{sim}}) implies ({\ref{bi}}). 
Note that by our assumption on $J_A$, it is graded and any simple object in $\md^G\!A$ is simple in $\md A$.
Assume (\ref{sim}) holds and set $B=\Om^n_{A^e}(A)$. This is a projective $A$-module on each side.

{\it{Step 1: $S \otimes_A B$ is simple for all graded simple (right) $A$-modules $S$.}}

Let $S$ be a graded simple $A$-module. Then, applying $S \otimes_A -$ to the minimal projective resolution $P \colon \cdots \to P_i \xrightarrow{d_i} P_{i-1} \to \cdots \to P_0$ of $A$ in $\md^G\!A^e$ yields the minimal projective resolution of $S$ in $\md^G\!A$. Indeed, since $A/J_A$ is separable over $k$, we have $J_{A^e}=J_A \otimes_k A + A \otimes_k J_A$. Then, $\Im d_i \subset P_{i-1}J_{A^e}=J_AP_{i-1}+P_{i-1}J_A$ by the minimality of $P$ and therefore, $\Im(S \otimes d_i) \subset S \otimes_A P_{i-1}J_A$ by $S \otimes_A J_AP_{i-1}=0$. This shows $S \otimes_A P$ is minimal. Therefore we have $S \otimes_A B \simeq \Om_A^n(S)$, which is simple by assumption (\ref{sim}).
 
It follows by induction that the exact functor $-\otimes_A B$ preserves length.

{\it{Step 2: $B \simeq A(a)$ in $\md^G\!A$.}}

Consider the exact sequence $0 \to J_A \to A \to A/J_A \to 0$ in $\md^G\!A$. Applying $-\otimes_A B$ yields $B \to A/J_A(a) \to 0$. This shows that the module $B$ contains $A/J_A(a)$ in its top. But since $B$ is a projective (right) $A$-module having the same length as $A$ by the remark following Step 1, we see that $B \simeq A(a)$ in $\md^G\!A$.

%
%
%
%
%
{\it{Step 3: There exists a graded algebra automorphism $\a$ of $A$ such that $\Om^n_{A^e}(A) \simeq {}_1A_\a(a)$.}}

By Step 2, there exists a graded algebra endomorphism $\a$ of $A$ such that $B \simeq {}_\a A_1(a)$ in $\md^G\!A^e$. Indeed, fix an isomorphism $\varphi \colon A(a) \to B$ in $\md^G\!A$, put $x=\varphi(1)$, and set $\a(u)=\varphi^{-1}(ux)$ for $u \in A$. Then, $\a$ is of degree $0$, since $x$ and $\varphi$ are, and it is easily checked that $\a$ is an algebra endomorphism and that $\varphi \colon {}_\a A_1(a) \to B$ is an isomorphism in $\md^G\!A^e$.
Now we show that $\a$ is an isomorphism. Let $I$ be the kernel of $\a$. Since $B \simeq {}_\a A$ is a projective left $A$-module, the inclusion $I \subset A$ in $\md^G\!A$ stays injective by applying $-\otimes_A {}_\a A$. But since that map $I \otimes_A {}_\a A \to {}_\a A$ is zero, we have $I \otimes_A {}_\a A=0$, and we conclude that $I=0$ by the remark following Step 1.

This finishes the proof of ({\ref{sim}}) $\Rightarrow$ ({\ref{bi}}).
\end{proof}

We need the following two particular cases. 
The first one, which we will use in Section \ref{8} is the following result for $G=\{1\}$, where the special case `$\Om^n(S) \simeq S$ for all simples' is \cite[1.4]{GSS}. 
\begin{Cor}\label{twgss}
Let $A$ be a ring-indecomposable, non-semisimple finite dimensional $k$-algebra such that $A/J_A$ is separable over $k$. Then, the following are equivalent for $n>0$.
\begin{enumerate}
\item\label{s} $\Om^n_A(A/J_A) \simeq A/J_A$.
\item\label{func} $A$ is self-injective and there exists an automorphism $\a$ of $A$ such that $\Om^n \simeq (-)_\a$ as functors on $\smd A$.
\item\label{perio} There exists an automorphism $\a$ of $A$ such that $\Om_{A^e}^n(A) \simeq {}_1A_\a$ in $\md A^e$.
\end{enumerate}
\end{Cor}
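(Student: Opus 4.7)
The plan is to deduce this as the special case $G=\{1\}$ (and hence $a=1$) of Theorem \ref{GSS}. Since an ungraded algebra is canonically $G$-graded for the trivial group, with $A_0=A$ and $A_i=0$ for $i\neq 0$, the hypothesis $J_A=J_{A_0}\oplus(\bigoplus_{i\neq 0}A_i)$ is automatic. The category $\md^G\!A$ coincides with $\md A$, the category $\md^G\!A^e$ coincides with $\md A^e$, and the grade shift $(a)$ is the identity functor. A graded algebra automorphism in this setting is just an algebra automorphism. Under these identifications, conditions (\ref{sim}), (\ref{twisted}), (\ref{bi}) of Theorem \ref{GSS} translate verbatim into conditions (\ref{s}), (\ref{func}), (\ref{perio}) of the corollary, and the standing assumptions on $A$ (ring-indecomposable, non-semisimple, finite dimensional, with $A/J_A$ separable over $k$) are exactly what is required to invoke that theorem. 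The equivalences therefore follow at once.

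There is essentially no obstacle here, since the work has already been done in Theorem \ref{GSS}; the only thing to verify is that the hypothesis on the grading of the radical is trivial when the group is trivial, which is immediate. If one wished to give a self-contained argument without passing through the graded version, one would simply repeat the three-step proof of Theorem \ref{GSS} with all occurrences of $\md^G\!A$, $\md^G\!A^e$, and the grade shift $(a)$ replaced by their ungraded counterparts: build the minimal projective resolution $P$ of $A$ in $\md A^e$, use separability of $A/J_A$ to get $J_{A^e}=J_A\otimes_k A + A\otimes_k J_A$ and hence that $S\otimes_A P$ is the minimal resolution of $S$ for every simple $S$, deduce that $-\otimes_A B$ preserves length for $B=\Om^n_{A^e}(A)$, then conclude $B\simeq A$ in $\md A$ from the exact sequence $0\to J_A\to A\to A/J_A\to 0$, and finally transfer the right $A$-isomorphism into a bimodule isomorphism ${}_1A_\a\simeq B$ by defining $\a$ from the image of $1$. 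But invoking Theorem \ref{GSS} directly is strictly shorter, so that is the approach I would adopt.
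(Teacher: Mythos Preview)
Your proposal is correct and matches the paper's approach exactly: the paper presents Corollary~\ref{twgss} simply as the special case $G=\{1\}$ of Theorem~\ref{GSS}, without giving a separate proof. Your verification that the radical hypothesis is automatic and that the graded notions collapse to the ungraded ones is precisely what is needed to make the specialization rigorous.
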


We call such algebras as follows.
\begin{Def}
A finite dimensional algebra is {\it twisted $n$-periodic} if it is a direct product of simple algebras or algebras satisfying the equivalent conditions in Corollary \ref{twgss}.
\end{Def}

The second one is the following for $G=\Z$ and the permutation of simples is the identity, which will be used in Section \ref{main}.
\begin{Cor}\label{grgss}
Let $A$ be a finite dimensional, ring-indecomposable, non-semisimple $\Z$-graded $k$-algebra such that $A/J_A$ is separable over $k$. Then, the following are equivalent for $a \in \Z$ and $n>0$.
\begin{enumerate}
\item $\Om_A^n(S) \simeq S(a)$ in $\md^\Z\!A$ for any simple objects in $\md^\Z\!A$.
\item $A$ is self-injective and there exists a graded algebra automorphism $\a$ of $A$ such that $\Om^n \simeq (-)_\a(a)$ as functors on $\smd^\Z\!A$ and $P_\a \simeq P$ in $\md^\Z\!A$ for all $P \in \proj^\Z\!A$..
\item There exists a graded algebra automorphism $\a$ of $A$ such that $\Om^n_{A^e}(A) \simeq {}_1A_\a(a)$ in $\md^\Z\!A^e$ and $P_\a \simeq P$ in $\md^\Z\!A$ for all $P \in \proj^\Z\!A$.
\end{enumerate}
\end{Cor}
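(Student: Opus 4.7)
The plan is to deduce Corollary \ref{grgss} directly from Theorem \ref{GSS} applied with $G = \Z$; the only genuinely new ingredient is the link between the clause ``$P_\a \simeq P$ for all $P \in \proj^\Z\!A$'' and the per-simple form of condition (1). First I would check that the grading hypothesis $J_A = J_{A_0} \oplus \bigoplus_{i \neq 0} A_i$ of Theorem \ref{GSS} is automatic in this setting: since $A$ is finite-dimensional over $k$ with $A/J_A$ separable, $J_A$ is a graded ideal and hence decomposes as its homogeneous components, with degree-zero part $J_{A_0}$. Theorem \ref{GSS} then yields the equivalence of the three weaker statements $(1')$ $\Om^n_A(A/J_A) \simeq A/J_A(a)$ in $\md^\Z\!A$, $(2')$ $A$ is self-injective and $\Om^n \simeq (-)_\a(a)$ as functors on $\smd^\Z\!A$ for some graded automorphism $\a$, and $(3')$ $\Om^n_{A^e}(A) \simeq {}_1 A_\a(a)$ in $\md^\Z\!A^e$ for some such $\a$.

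The bridging calculation is that, given the bimodule isomorphism in $(3')$, tensoring on the left with any graded simple $S$ produces an isomorphism $\Om^n_A(S) \simeq S_\a(a)$ in $\md^\Z\!A$, by the same reasoning as Step~1 of the proof of Theorem \ref{GSS}; condition $(2')$ yields the same identity by functoriality. Consequently, condition (1) of the corollary --- that $\Om^n(S) \simeq S(a)$ for every graded simple $S$ --- is equivalent to $(1')$ together with the extra assertion that $S_\a \simeq S$ for every graded simple $S$.

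To match this with the projective-invariance clause, I would use that the twist functor $(-)_\a$ is an auto-equivalence of $\md^\Z\!A$ which sends graded simples to graded simples and graded projectives to graded projectives, and commutes with taking tops and projective covers. Hence $S_\a \simeq S$ for every graded simple $S$ if and only if $P_\a \simeq P$ for every indecomposable $P \in \proj^\Z\!A$, and by Krull--Schmidt and additivity, if and only if the same holds for every $P \in \proj^\Z\!A$. Combined with the equivalence $(1') \Leftrightarrow (2') \Leftrightarrow (3')$ of Theorem \ref{GSS}, this delivers the desired three-way equivalence (1) $\Leftrightarrow$ (2) $\Leftrightarrow$ (3). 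The main subtle step is the idempotent bookkeeping behind ``$S_\a \simeq S \iff P(S)_\a \simeq P(S)$'': one has to pin down carefully the direction of twisting (left vs.\ right, $\a$ vs.\ $\a^{-1}$) so that $(-)_\a$ really does induce the advertised bijections on isomorphism classes of simples and of indecomposable projectives, but once this is set, no serious difficulty remains beyond what is already handled in Theorem \ref{GSS}.
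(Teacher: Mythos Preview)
Your approach is exactly what the paper intends: the corollary is stated there without proof as the $G=\Z$ case of Theorem \ref{GSS} in which ``the permutation of simples is the identity'', and your bridging calculation --- that the per-simple condition $\Om^n(S)\simeq S(a)$ amounts to condition $(1')$ of the theorem together with $S_\a\simeq S$ for all graded simples $S$, which in turn is equivalent via tops and projective covers to $P_\a\simeq P$ for all $P\in\proj^\Z\!A$ --- supplies precisely the missing link.

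There is one inaccuracy in your first step. The hypothesis $J_A=J_{A_0}\oplus\bigoplus_{i\neq0}A_i$ of Theorem \ref{GSS} is \emph{not} automatic for $\Z$-graded algebras: take $A=M_2\bigl(k[\epsilon]/(\epsilon^2)\bigr)$ with $\deg e_{11}=\deg e_{22}=\deg\epsilon=0$, $\deg e_{12}=1$, $\deg e_{21}=-1$; then $A$ is ring-indecomposable and non-semisimple with $A/J_A\simeq M_2(k)$ separable, yet $e_{12}\in A_1\setminus J_A$. What the proof of Theorem \ref{GSS} actually extracts from that hypothesis is only that $J_A$ is graded (automatic in finite dimension) and that every graded simple $A$-module is simple. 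The second fact does hold for $\Z$-graded finite-dimensional algebras (see for instance \cite{GG}), essentially because $\Z$ is torsion-free; but it is not the statement you wrote, and the paper does not spell this reduction out either. With this correction your argument is complete.
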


Similarly, we name these algebras as follows.
\begin{Def}
A finite dimensional graded algebra is {\it $(a)$-twisted $n$-periodic} if it is a direct product of simple algebras or algebras satisfying the equivalent conditions in Corollary \ref{grgss}.
\end{Def}

\section{Auslander correspondence}\label{8}
We now prove the first main result Theorem \ref{intf} of this paper, which gives a homological characterization of the Auslander algebras of finite triangulated categories.

First, we give the properties of the endomorphism algebra of a basic additive generator for a finite triangulated category, proving Theorem \ref{intf} (\ref{triang}) $\Rightarrow$ (\ref{tw3}).

\begin{Prop}\label{fineasy}
Let $\T$ be a $k$-linear, $\rHom$-finite idempotent-complete triangulated category. Assume $\T$ has an additive generator $M$. Take $M$ to be basic and set $C=\End_\T(M)$. Let $\s$ be the automorphism of $C$ induced by $[1]$; precisely, fix an isomorphism $a \colon M \to M[1]$ and define $\a$ by $\a(f)=a^{-1} \circ f[1] \circ a$ for $f \in \End_\T(M)$. Then, $C$ is a finite dimensional algebra which is twisted $3$-periodic.
\end{Prop}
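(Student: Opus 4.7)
The strategy is to deduce the conclusion from Corollary \ref{twgss}. Finite-dimensionality of $C$ is immediate from the $\rHom$-finiteness of $\T$ and the fact that $M$ has only finitely many indecomposable summands. If $C$ is a product of simple algebras, the conclusion holds by the very definition of ``twisted $n$-periodic''; hence I may assume $C$ is non-semisimple, so that Corollary \ref{twgss} applies and it suffices to verify any one of its three equivalent conditions. I aim at condition (\ref{s}): $\Omega_C^3(C/J_C)\simeq C/J_C$ in $\md C$.

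The key structural observation is that the Yoneda functor $F=\Hom_\T(M,-)\colon \T\to\proj C$ is an equivalence of additive categories, and the fixed isomorphism $a\colon M\to M[1]$ translates the shift functor $[1]\colon\T\to\T$ into the twist $(-)_\sigma\colon \proj C\to\proj C$. I would then use the triangulated structure of $\T$ to produce, for each indecomposable summand $M_i$ of $M$ with corresponding simple top $S_i$, a projective resolution of length three of $S_i$ obtained by applying $F$ to a suitable chain of rotated triangles ending at $M_i$; the third syzygy lands, up to projective summands, at $F(M_i[1])=(P_i)_\sigma$, whose top is $(S_i)_\sigma$. Summing over $i$ yields $\Omega_C^3(C/J_C)\simeq (C/J_C)_\sigma$, which is in turn isomorphic to $C/J_C$ because $\sigma$ merely permutes the isomorphism classes of simples. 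Corollary \ref{twgss} then provides the self-injectivity of $C$ together with an automorphism $\alpha$ for which $\Omega^3\simeq(-)_\alpha$ on $\smd C$, i.e.\ $C$ is twisted $3$-periodic.

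The main obstacle is upgrading the pointwise identification $\Omega^3 S_i\simeq(S_i)_\sigma$ to a genuine isomorphism of $C$-modules. The three successive syzygies come a priori from three distinct triangles in $\T$, and splicing them into a coherent length-three projective resolution of $S_i$ requires functoriality and the octahedral axiom. This is precisely the content of Heller's classical observation and its refinement by Amiot (Proposition \ref{Ami}) cited in the introduction, and it is the key input on which the whole argument hinges; all remaining steps are routine bookkeeping.
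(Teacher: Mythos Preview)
Your overall strategy---using the triangulated structure to compute third syzygies and then appealing to Corollary~\ref{twgss}---is on the right track, but it diverges from the paper's argument in a way that introduces an unnecessary complication and one genuine gap.

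\textbf{The imaginary obstacle.} You worry that ``the three successive syzygies come a priori from three distinct triangles'' and that splicing them requires the octahedral axiom or Proposition~\ref{Ami}. Neither is needed. A \emph{single} triangle $U \to V \to W \to U[1]$, under $F=\Hom_\T(M,-)$, already yields the long exact sequence
\[ \cdots \to F(W[-1]) \to F(U) \to F(V) \to F(W) \to F(U[1]) \to \cdots \]
of projective $C$-modules. If $L=\coker\bigl(F(V)\to F(W)\bigr)$, three consecutive terms of this one sequence exhibit $\Om^3 L \simeq \coker\bigl(F(V[-1])\to F(W[-1])\bigr)\simeq L_\sigma$ in $\smd C$. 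Note also that Proposition~\ref{Ami} runs in the \emph{opposite} direction (it manufactures a triangulation from a given periodic resolution of the identity functor) and plays no role here; in the paper it is used only for the converse implication $(\ref{tw3})\Rightarrow(\ref{triang})$ of Theorem~\ref{intf}.

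\textbf{How the paper proceeds.} The paper establishes condition~(\ref{func}) of Corollary~\ref{twgss} directly: self-injectivity of $C$ comes for free from the fact that $\md\T\simeq\md C$ is a Frobenius abelian category, and then the functorial isomorphism $\Om^3\simeq(-)_\sigma$ on $\smd C$ follows from the single-triangle computation above applied to an arbitrary module. This is already the definition of twisted $3$-periodicity; there is no need to single out simple modules.

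\textbf{A real gap.} Because you rely on Corollary~\ref{twgss} to extract self-injectivity from condition~(\ref{s}), you implicitly need its standing hypothesis that $C/J_C$ is separable over $k$. Proposition~\ref{fineasy} does not assume this (the perfect-field hypothesis enters only later, in Theorem~\ref{intf}). The paper sidesteps the issue entirely by reading off self-injectivity from the Frobenius structure of $\md\T$; you should do the same and then aim at condition~(\ref{func}) rather than~(\ref{s}).
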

\begin{proof}Since $\md\T \simeq \md C$ and $\md\T$ is a Frobenuis category (see \cite[4.2]{Kr}), $C$ is self-injective. Also, since the triangles in $\T$ yield projective resolutions of $C$-modules, the third syzygy is induced by the automorphism $\a$, that is, we have $\Om^3 \simeq (-)_\a$ on $\smd C$. Then $C$ is twisted $3$-periodic by Corollary \ref{twgss}. \end{proof}

For the converse implication, we need the following result due to Amiot, which allows one to introduce a triangle structure on the category of projectives in a Frobenius category.
\begin{Prop}\cite[8.1]{Am}\label{Ami}
Let $\P$ be an idempotent complete $k$-linear category such that the functor category $\md \P$ is naturally a Frobenius category. Let $S$ be an autoequivalence of $\P$ and extend this to $\md\P \to \md\P$.
Assume there exists an exact sequence of exact functors from $\md \P$ to $\md \P$
\[ \xymatrix{ 0 \ar[r] & 1 \ar[r] & X^0 \ar[r] & X^1 \ar[r] & X^2 \ar[r] & S \ar[r] & 0, } \]
where $X^i$ take values in $\P=\proj\P$. Then, $\P$ has a structure of a triangulated category with suspension $S$. The triangles are ones isomorphic to $X^0M \to X^1M \to X^2M \to SX^0M$ for $M \in \md\P$.
\end{Prop}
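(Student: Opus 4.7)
The plan is to produce the triangulated structure on $\P$ by exploiting the Frobenius structure on $\md\P$. Since $\md\P$ is Frobenius with projective-injectives $\P$, the stable category $\smd\P$ is automatically triangulated, with suspension $\Om^{-1}$ and distinguished triangles induced by short exact sequences in $\md\P$. I would first use the given exact sequence of functors, together with the assumption that $X^0, X^1, X^2$ take values in $\P=\proj\md\P$ (so they vanish in $\smd\P$), to derive a natural isomorphism $S \simeq \Om^{-3}$ as autoequivalences of $\smd\P$. Concretely, splitting the $4$-term exact sequence into three short exact sequences and splicing the three associated triangles in $\smd\P$ makes the intermediate terms $X^iM$ disappear, leaving a natural isomorphism $SM \simeq \Om^{-3}M$ for every $M \in \md\P$.

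The remaining task is to transport a triangulated structure back from $\smd\P$ to $\P$ itself. Although every object of $\P$ becomes zero in $\smd\P$, the category $\P$ admits a triangulation whose distinguished triangles are precisely the $3$-step projective resolutions of objects of $\md\P$. I would define these concretely as follows: for $M \in \md\P$, use the given exact sequence to form the candidate
\[ X^0M \longrightarrow X^1M \longrightarrow X^2M \longrightarrow SX^0M, \]
where the last map is the composite $X^2M \to SM \xrightarrow{S(i)} SX^0M$ with $i\colon M \hookrightarrow X^0M$ the canonical inclusion from the same exact sequence. Declare the distinguished triangles in $\P$ to be all those isomorphic to one of this form.

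The verification of axioms (TR1)--(TR4) is then standard but intricate. TR1 (every morphism fits in a triangle) follows by setting $M = \coker f$ for a morphism $f\colon A \to B$ in $\P$: the first two terms $X^0M, X^1M$ of the resolution reproduce $f$ up to projective summands, and one adjusts to obtain a distinguished triangle containing $f$. TR3 (morphism of triangles) is immediate from the functoriality of the resolution. The main technical obstacles are TR2 (rotation) and TR4 (octahedron). For TR2 one must identify a rotated standard triangle with another standard one: this is done by replacing the defining object $M$ by the next syzygy in $\md\P$ and invoking the natural isomorphism $S \simeq \Om^{-3}$ in $\smd\P$ to match the rotated fourth term. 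For TR4 one assembles, from two composable morphisms $A \to B \to C$ in $\P$, the three cokernels in $\md\P$ into a short exact sequence, applies the exact functor sequence componentwise, and reads off the octahedral diagram from the resulting $3 \times 3$ array; the exactness of the sequence of functors is precisely what guarantees that this assembly is coherent. The expected main difficulty lies in TR4, where one must keep track of the compatibility between the connecting morphisms coming from three different applications of the functor sequence.
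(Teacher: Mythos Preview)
The paper does not contain a proof of this proposition: it is quoted verbatim from Amiot \cite[8.1]{Am} and used as a black box in the proof of Theorem~\ref{intf}. So there is no in-paper argument to compare your proposal against.

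That said, your sketch is broadly the correct strategy and is in the spirit of Amiot's original proof (which in turn goes back to Heller). The key structural points you identify are the right ones: the exact sequence of functors applied to $M$ yields a length-$4$ resolution by projectives, splicing gives a natural isomorphism $S\simeq\Om^{-3}$ on $\smd\P$, and the candidate triangles are the standard resolutions with connecting map $X^2M\to SM\to SX^0M$. Your handling of TR1 is slightly imprecise: given $f\colon A\to B$ in $\P$ with cokernel $M$, the pair $(A,B)$ is a projective presentation of $M$, but the functorial sequence puts $M$ at the \emph{left} end, so $X^0M\to X^1M$ is not a presentation of $M$ but rather the beginning of an injective coresolution. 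One instead compares $(A,B)$ with $(X^1N,X^2N)$ for $N=S^{-1}M$, or equivalently uses Schanuel's lemma and the fact that the triangles are only defined up to isomorphism; this is the adjustment you allude to. Your outline of TR2 via syzygy shift and TR4 via the $3\times3$ diagram is exactly how the verification proceeds in \cite{Am}.
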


Combining this with Corollary \ref{twgss}, we can prove Theorem \ref{intf} (\ref{tw3}) $\Rightarrow$ (\ref{triang}).
Let us summarize the proof below.
\begin{proof}[Proof of Theorem \ref{intf}]
(\ref{triang}) $\Rightarrow$ (\ref{tw3}) is Proposition \ref{fineasy}.\\
(\ref{tw3}) $\Rightarrow$ (\ref{triang})
Since $A$ is self-injective, $\Om^3$ permutes the simples, so by Corollary {\ref{twgss}}, there exists an exact sequence 
\[ \xymatrix{ 0 \ar[r] & A \ar[r] & P^0 \ar[r] & P^1 \ar[r] & P^2 \ar[r] & {}_1A_{\a} \ar[r] & 0  } \]
of $(A,A)$-bimodules, with $P^i$'s projective and $\a$ is an automorphism of $A$. Then, we can apply Proposition {\ref{Ami}} for $\P=\proj A$, $S=-\otimes_A A_{\a}$, and $X^i=-\otimes_A P^i$.
\end{proof}

Applying a recent result of Keller \cite{Ke18}, we can formulate Theorem \ref{intf} in terms of bijection between triangulated categories and algebras under the standardness $\T$.
\begin{Thm}\label{std}
Let $k$ be an algebraically closed field. Then, there exists a bijection between the following.
\begin{enumerate}
\item\label{fas} The set of triangle equivalence classes of $k$-linear, $\rHom$-finite, idempotent-complete triangulated categories which are finite, algebraic, and standard.
\item\label{mes} The set of isomorphism classes of finite dimensional mesh algebras over $k$.
\end{enumerate}
The correspondence from {\rm (\ref{fas})} to {\rm (\ref{mes})} is given by taking the basic Auslander algebra, and from (\ref{mes}) to (\ref{fas}) by taking the category of projective modules.
\end{Thm}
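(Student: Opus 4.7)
The strategy is to mediate the bijection through finite stable translation quivers. By definition, a finite dimensional mesh algebra is $k(\Gam)$ for such a quiver $\Gam$, and standardness of a finite triangulated category $\T$ means that $\T$ is equivalent, as an additive category, to the mesh category $k(\Gam_\T)$ of its AR-quiver. I will verify that both maps are well-defined, show they are mutually inverse at the level of AR-quivers, and invoke Keller's recent uniqueness result \cite{Ke18} to upgrade additive equivalences to triangle equivalences.

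First I would check well-definedness of the forward map $\T \mapsto \End_\T(M)$ for $M$ a basic additive generator. Standardness of $\T$ identifies $\T$ with $k(\Gam_\T)$, so the basic Auslander algebra is the mesh algebra $k(\Gam_\T)$; it is finite dimensional since $\T$ is $\rHom$-finite with finitely many indecomposables. For the reverse map, given a finite dimensional mesh algebra $A=k(\Gam)$, a standard computation shows that such algebras are self-injective with $\Om^3 \simeq (-)_\a$ for an automorphism $\a$ induced from the translation; that is, $A$ is twisted $3$-periodic. The implication (\ref{tw3}) $\Rightarrow$ (\ref{triang}) of Theorem \ref{intf}, through Amiot's Proposition \ref{Ami}, equips $\P:=\proj A$ with a triangulated structure, making it a $k$-linear, $\rHom$-finite, idempotent-complete, finite triangulated category. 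It is algebraic because it is the stable category of the Frobenius category $\md A$ (as used in Proposition \ref{fineasy}). Inspecting the triangles produced by Amiot's construction, the induced suspension on $\P$ reproduces the translation on $\Gam$, so the AR-quiver of $\P$ is $\Gam$ and $\P$ is standard.

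Next I would verify that the two assignments are mutually inverse. Starting from a mesh algebra $A=k(\Gam)$, the basic Auslander algebra of $\P=\proj A$ under the triangulated structure above is $A$ itself, by construction. Starting from a standard $\T$ with basic additive generator $M$ and Auslander algebra $C=k(\Gam_\T)$, the category $\proj C$ is additively equivalent to $k(\Gam_\T)$, which is standardly equivalent to $\T$.

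The main obstacle is injectivity on the categorical side: two standard algebraic finite triangulated categories with isomorphic basic Auslander algebras must be triangle equivalent, not merely additively equivalent. This is exactly where Keller's result \cite{Ke18} is used: for the class of algebras in question, the algebraic triangulated structure on $\proj C$ is unique up to equivalence, so any additive equivalence $\T \simeq \proj C$ coming from standardness automatically lifts to a triangle equivalence. Combined with the previous paragraphs this yields the bijection, and the explicit descriptions of both maps follow directly from the constructions.
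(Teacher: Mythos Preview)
Your overall strategy---well-definedness of both maps, mutual inverse at the level of AR-quivers, then Keller's uniqueness \cite{Ke18}---matches the paper's. But there is a genuine gap in your argument for the reverse map: the claim that the triangulated structure on $\proj A$ produced by Amiot's construction is \emph{algebraic}.

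Your justification, ``it is algebraic because it is the stable category of the Frobenius category $\md A$'', is simply false. The stable category of the Frobenius category $\md A$ is $\smd A = \md A/\proj A$, not $\proj A$; Proposition~\ref{fineasy} uses that $\md A$ is Frobenius only to deduce that $A$ is self-injective, not to exhibit $\proj A$ as a stable category. Amiot's Proposition~\ref{Ami} equips $\proj A$ with \emph{some} triangulated structure, but says nothing about algebraicity, and Keller's uniqueness result \cite{Ke18} only applies once you know at least one algebraic structure exists. So as written, your argument for well-definedness of the map (\ref{mes}) $\to$ (\ref{fas}) does not go through.

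The paper fills this gap by a different route. Having put a (possibly non-algebraic) triangle structure on $\proj A$ and observed it is standard, one invokes Riedtmann's covering theory \cite{Rie} to obtain a $k$-linear equivalence $\der(\md kQ)/F \simeq \proj A$ for a Dynkin quiver $Q$ and a $k$-linear autoequivalence $F$. One then shows (using \cite{Ke18}) that $F$ can be realized as $-\otimes_{kQ}^L X$ for a two-sided tilting complex $X$, so that by Keller's theory of triangulated orbit categories \cite{Ke05} the orbit category $\der(\md kQ)/F$ carries an algebraic triangle structure; this transports to $\proj A$. Only after this does \cite{Ke18} give uniqueness. You should replace your one-line algebraicity claim with this argument (or an equivalent one).
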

\begin{proof}
We first check that each map is well-defined.

Let $\T$ be a triangulated category as in (\ref{fas}). Then, the standardness of $\T$ implies that its basic Auslander algebra is a mesh algebra.

Suppose next that $A$ is a finite dimensional mesh algebra. We want to show that $\proj A$ has the unique structure of an algebraic triangulated category up to equivalence. Since the third syzygy of simple $A$-modules are simple, $\T=\proj A$ has a structure of a triangulated category by Theorem \ref{intf}. Also, this is standard since $A$ is a mesh algebra. We claim that $\proj A$ admits a triangle structure which is algebraic. Since $\T$ is a finite, standard triangulated category, there exists a Dynkin quiver $Q$, a $k$-linear automorphism $F$ of $\der(\md kQ)$, and a $k$-linear equivalence $\der(\md kQ)/F \simeq \proj A$ \cite{Rie}. As in the proof of \cite{Ke18}, $F$ is isomorphic to $-\otimes_{kQ}^L X$ for some $(kQ,kQ)$-bimodule complex $X$. Then by \cite{Ke05}, $\der(\md kQ)/F$ admits an algebraic triangle structure as a triangulated orbit category, hence so does $\proj A$. This finishes the proof of the claim. Now, this algebraic triangle structure is unique up to equivalence by the main result of \cite{Ke18}. This shows the well-definedness.

It is clear that these maps are mutually inverse.
\end{proof}

\section{Graded projectivization}\label{easy}
In this section, we formulate the method of realizing certain additive categories, which we call {\it{$G$-finite}} additive categories on which a group $G$ acts with some finiteness conditions, as the category of graded projective modules over a $G$-graded algebra. This generalizes the classical `projectivization' \cite[\uppercase\expandafter{\romannumeral2}.2]{ARS}, which realizes a finite additive category as the category of projectives over an algebra.

Let $\A$ be an additive category with an action of a group $G$. Precisely, an automorphism $F_g$ of $\A$ is given for each $g \in G$ so that $F_{gh}=F_h \circ F_g$ for all $g, h \in G$. Then the action of $G$ extends to an automorphism of $\md\A$ by $F_gM=M \circ F_g^{-1}$. For example, the action on the representable functors is $F_g\A(-,X)=\A(-,F_gX)$.

Recall that the orbit category $\A/G$ has the same objects as $\A$ and the morphism space 
\[ (\A/G)(X,Y)=\bigoplus_{g \in G}\A(X,F_gY) \]
and the composition $b \circ a$ of $a \in \T(X,F_gY)$ and $b \in \T(Y,F_hZ)$ is given by $b \circ a= F_g(b)a$, where the right hand side is the composition in $\A$. Then, $\A/G$ is naturally a $G$-graded category whose degree $g$ part is $\A(X,F_gY)$.
\begin{Prop}\label{gracat}
Let $\A$ be an additive category with an action of a group $G$. Consider the orbit category $\C=\A/G$. Then, the following assertions hold:
\begin{enumerate}
\item\label{yon} The Yoneda embedding $\A \to \proj^G\!\C$ is fully faithful. It is an equivalence if $\A$ is idempotent-complete.
\item\label{fun} There exists an equivalence $\md\A \simeq \md^G\!\C$ such that the action of $F_g$ on $\md\A$ corresponds to the grade shift $(g)$ on $\md^G\!\C$, that is, we have the following commutative diagram of functors:
\[ \xymatrix{
   \md\A \ar[r]^\simeq\ar[d]_{F_g} & \md^G\!\C \ar[d]^{(g)} \\
   \md\A \ar[r]^\simeq & \md^G\!\C. } \]
\end{enumerate}
\end{Prop}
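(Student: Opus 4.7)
The plan is to derive both statements from the graded Yoneda lemma together with the basic intertwining identity
\[
 \C(-,X)(g) \;=\; \C(-,F_g X) \quad \text{in } \md^G\!\C,
\]
which follows directly from the definitions: evaluated at $Y$ in degree $h$, both sides equal $\A(Y, F_{gh} X)$, using the conventions $F_{gh}=F_h\circ F_g$ and $(M(g))_h=M_{gh}$. Conceptually, this says that the grade shift on Yoneda representables coincides with the $G$-action on $\A$. A useful consequence is that every finitely generated graded free $\C$-module is representable by an object of $\A$: $\bigoplus_i \C(-,X_i)(g_i) = \C(-, \bigoplus_i F_{g_i}X_i)$.

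For part (\ref{yon}), I would invoke the graded Yoneda lemma
\[ \Hom_{\md^G\!\C}(\C(-,X),\,N)_0 \;\cong\; N(X)_e \qquad (N\in\md^G\!\C). \]
Exactness in $N$ yields projectivity of $\C(-,X)$, finite generation is immediate, and taking $N=\C(-,Y)$ gives the full faithfulness of the Yoneda embedding $\A \to \proj^G\!\C$. For essential surjectivity when $\A$ is idempotent-complete, any $P\in\proj^G\!\C$ is a summand of a finitely generated graded free module, which by the intertwining identity is $\C(-,Z)$ for some $Z\in\A$. The corresponding idempotent of $\End_\A(Z)$ (via fully faithfulness) splits in $\A$ by hypothesis and produces an $X\in\A$ with $P\simeq\C(-,X)$.

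For part (\ref{fun}), I would define $\Phi \colon \md\A\to\md^G\!\C$ on $M\in\md\A$ with presentation $\A(-,X_1)\to\A(-,X_0)\to M\to 0$ by setting $\Phi(M)=\coker(\C(-,X_1)\to\C(-,X_0))$, where the map is the Yoneda image of the given one in $\A$. Routine Yoneda-style arguments give that $\Phi$ is a well-defined functor and is fully faithful. For essential surjectivity, any $N\in\md^G\!\C$ admits by definition a presentation $\C(-,X_1)\to\C(-,X_0)\to N\to 0$; the representables come from objects of $\A$, and the degree-zero differential corresponds via Yoneda to a morphism in $\A$, so $N\simeq\Phi(M)$ for the corresponding cokernel $M\in\md\A$. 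Finally, the commutative square: applying $F_g$ to a presentation of $M$ yields $\A(-,F_gX_1)\to\A(-,F_gX_0)\to F_gM\to 0$, whose Yoneda image is $\C(-,X_1)(g)\to\C(-,X_0)(g)\to \Phi(M)(g)\to 0$ by the intertwining identity, giving a natural isomorphism $\Phi\circ F_g\simeq (g)\circ \Phi$.

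\textbf{Main obstacle.} The substance is not difficult; the real care lies in the grading and action conventions. One must verify the intertwining identity $\C(-,X)(g)=\C(-,F_gX)$ under the stated conventions, confirm that finite direct sums of shifted representables absorb into single unshifted representables (so the definition of $\md^G\!\C$ via presentations suffices in the graded setting), and keep track of the direction of the $G$-action on $\md\A$, namely $F_gM=M\circ F_g^{-1}$. Once this bookkeeping is in place, the argument reduces essentially to the graded Yoneda lemma and mirrors the classical ungraded projectivization.
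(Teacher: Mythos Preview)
Your proposal is correct and follows essentially the same approach as the paper: both hinge on the graded Yoneda lemma $\Hom_{\md^G\!\C}(\C(-,X),N)_0 \cong N(X)_e$ for full faithfulness and on the intertwining identity $\C(-,X)(g)=\C(-,F_gX)$ (verified in exactly the way you indicate) for the commutativity with grade shifts. The paper's proof is terser---it simply asserts that under idempotent-completeness the graded projectives are representable and that the embedding extends to $\md\A\simeq\md^G\!\C$---while you spell out the idempotent-splitting and presentation arguments explicitly, but there is no substantive difference in strategy.
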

\begin{proof}
(\ref{yon})  We have the Yoneda lemma for graded functors: $\Hom_{\md^G\!\C}(\C(-,X),M)=(MX)_0$. It follows that the Yoneda embedding $\A \to \proj^G\!\C$ is fully faithful. Also, if $\A$ is idempotent-complete, the projectives in $\md^G\!\C$ are representable, and therefore the Yoneda embedding is dense.\\
(\ref{fun})  It is clear that the functor in (\ref{yon}) induces an equivalence $\md\A \simeq \md^G\!\C$. Also, the degree $h$ part of the functor $\C(-,F_gX)$ is $\A(-,F_hF_gX)=\A(-,F_{gh}X)$, which is equal to the same degree part of $\C(-,X)(g)$. Thus we have the commutative diagram.
\end{proof}

Now we impose the following finiteness conditions on the $G$-action:
\begin{enumerate}
\def\theenumi{(G{\arabic{enumi}})}
\def\labelenumi{\theenumi}
\item\label{g1} There is $M \in \A$ such that $\A=\add\{F_gM \mid g \in G \}$.
\item\label{g2} For any $X,Y \in \A$, $\Hom_\A(X,F_gY)=0$ for almost all $g \in G$.
\end{enumerate}
If these conditions are satisfied, we say that an additive category $\A$ with an action of $G$ is {\it{$G$-finite}}. If $\A$ is a $G$-finite additive category, we say $M \in \A$ as in \ref{g1} is a {\it{$G$-additive generator}}. If $G$ is generated by a single element $F$, we use the term {\it{$F$-finite}} for $G$-finiteness, and {\it{$F$-additive generator}} for $G$-additive generator.
Note that if $G$ is the trivial group, $G$-finiteness is nothing but finiteness, and a $G$-additive generator is an additive generator.

Let us reformulate Proposition \ref{gracat} in terms of the graded endomorphism algebra below. Note that this generalizes the classical `projectivization' for finite additive categories, which is the case $G$ is trivial, to `graded projectivization' for $G$-finite categories. Although this is rather formal, it will be useful in the sequel.

\begin{Prop}\label{efu}
Let $\A$ be a $k$-linear, $\rHom$-finite, idempotent-complete category with an action of $G$, which is $G$-finite. Let $M \in \A$ be a $G$-additive generator and set $C=\End_{\A/G}(M)$. Then, the following assertions hold:
\begin{enumerate}
\item $C$ is a finite dimensional $G$-graded algebra.
\item The functor $\A \to \proj^G\!C$, \ $X \mapsto \bigoplus_{g \in G}\Hom_\A(M,F_gX)$ is an equivalence.
\item\label{sansansan} There exists an equivalence $\md\A \simeq \md^G\!C$ such that the action of $g$ on $\md\A$ corresponds to the grade shift $(g)$ on $\md^G\!C$.
\end{enumerate}
\end{Prop}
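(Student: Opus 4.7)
The plan is to reduce to Proposition \ref{gracat} applied to the orbit category $\A/G$, and then pass from the functor category $\md^G(\A/G)$ down to $\md^G(C)$ by a graded Morita-type reduction based on the $G$-additive generator $M$.

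For (1), I would note that the identification $C = \End_{\A/G}(M) = \bigoplus_{g \in G}\A(M, F_gM)$ makes $C$ a $G$-graded $k$-algebra by the very definition of composition in the orbit category. Condition \ref{g2} forces all but finitely many graded components to vanish, and $\rHom$-finiteness of $\A$ bounds each one, so $C$ is finite-dimensional.

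For (2), first apply Proposition \ref{gracat}(1) to obtain the equivalence $\A \simeq \proj^G(\A/G)$. A direct computation with the composition law in the orbit category shows that $(\A/G)(-, F_gM) \cong (\A/G)(-, M)(g)$ in $\proj^G(\A/G)$, so condition \ref{g1} says every object of $\proj^G(\A/G)$ is a direct summand of a finite direct sum of grade shifts of the single representable $(\A/G)(-, M)$. Evaluation at $M$ then gives a functor $\proj^G(\A/G) \to \proj^G(C)$ which is fully faithful by the graded Yoneda lemma $\Hom_{\md^G(\A/G)}((\A/G)(-, M), N)_0 = (NM)_0$ and essentially surjective by the preceding observation. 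The composite $\A \simeq \proj^G(\A/G) \simeq \proj^G(C)$ then sends $X \mapsto (\A/G)(M, X) = \bigoplus_g \A(M, F_gX)$, exactly the functor in (2).

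For (3), the same evaluation at $M$ extends to an equivalence $\md^G(\A/G) \simeq \md^G(C)$, since projective presentations transport across this functor. Composing with the equivalence $\md\A \simeq \md^G(\A/G)$ of Proposition \ref{gracat}(2) yields $\md\A \simeq \md^G(C)$, and both equivalences are shift-compatible by construction, giving the required intertwining of $F_g$ with the grade shift $(g)$. The point I expect to require the most care is the bookkeeping around the convention $F_{gh} = F_h \circ F_g$: one must verify carefully the identification $(\A/G)(-, F_gM) \cong (\A/G)(-, M)(g)$ and that the resulting equivalences intertwine the $G$-action with the grade shift in the correct direction rather than its inverse. Once this is sorted out, the rest is formal graded Morita theory for categories with a single graded generator.
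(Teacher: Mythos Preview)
Your proposal is correct and follows essentially the same approach as the paper: reduce to Proposition~\ref{gracat} for the orbit category $\A/G$, then pass to $C$ by evaluating at the generator $M$. You spell out in detail what the paper compresses into the phrase ``by substituting $M$'', and your caution about the convention $F_{gh}=F_h\circ F_g$ is exactly the computation the paper carries out in the proof of Proposition~\ref{gracat}(\ref{fun}).
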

\begin{proof}
(1)  $C$ is finite dimensional by \ref{g2}. \\
(2)  Since we have an equivalence $\proj^G\!\A/G \to \proj^G\!C$ by substituting $M$, the assertion follows from Proposition \ref{gracat} (\ref{yon}).\\
(3)  This is the same as Proposition \ref{gracat} (\ref{fun}).
\end{proof}

\begin{Def}\label{grmo}
$G$-Graded rings $A$ and $B$ are {\it{graded Morita equivalent}} if there is an equivalence $\md^G\!A \simeq \md^G\!B$ which commutes with grade shift functors $(g)$ for all $g \in G$.
\end{Def}

Let us note the following remark.
\begin{Prop}\label{grm}
Assume {\rm \ref{g1}} is satisfied and set $C=\End_{\A/G}(M)$.
\begin{enumerate}
\item\label{grm1} The ungraded algebra $C$ does not depend on the choice of $M$ up to Morita equivalence.
\item\label{grm2} The graded algebra $C$ does not depend on the choice of $M$ up to graded Morita equivalence.
\end{enumerate}
\end{Prop}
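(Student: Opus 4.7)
The plan is to deduce both statements uniformly from Proposition~\ref{gracat}, combined with a graded form of the classical projectivization argument.

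First I would reformulate condition \ref{g1} inside the orbit category $\A/G$. A direct computation with the composition law shows that $\id_{F_g M} \in \A(F_g M, F_g M)$ furnishes an invertible degree-$g$ element of $(\A/G)(F_g M, M)$, with inverse $\id_M$ sitting in degree $g^{-1}$; hence $F_g M \simeq M$ in $\A/G$ for every $g \in G$. Under the correspondence $F_g \leftrightarrow (g)$ of Proposition~\ref{gracat}(\ref{fun}), condition \ref{g1} then translates to the statement that the representable graded functor $P_M := (\A/G)(-, M)$ is a graded progenerator of $\md^G(\A/G)$: every representable is a direct summand of a finite sum $\bigoplus_i P_M(g_i)$ of its grade shifts, and consequently every graded projective functor is such a summand as well.

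For part (\ref{grm2}), the graded Yoneda lemma $\Hom_{\md^G(\A/G)}(P_M, N) = (NM)_0$ recorded in the proof of Proposition~\ref{gracat} identifies the full graded endomorphism ring of $P_M$ with $C = \End_{\A/G}(M) = \bigoplus_g \A(M, F_g M)$. Graded Morita theory then yields a grade-shift-preserving equivalence $\md^G(\A/G) \simeq \md^G C$ via $N \mapsto \bigoplus_g (NM)_g$. Running the same recipe for a second $G$-additive generator $M'$ produces an analogous equivalence $\md^G(\A/G) \simeq \md^G C'$, and composing the two yields the required graded Morita equivalence $\md^G C \simeq \md^G C'$.

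Statement (\ref{grm1}) follows immediately by forgetting the grading, or equivalently by running the same argument in the ungraded setting: the isomorphisms $F_g M \simeq M$ in $\A/G$ promote any $G$-additive generator $M$ to an ordinary additive generator of $\A/G$, and classical projectivization identifies $\md(\A/G)$ with both $\md C$ and $\md C'$. The only non-routine point in the whole proof is the verification of $F_g M \simeq M$ in $\A/G$ together with the matching of the grade shifts $(g)$ with the $G$-action $F_g$; both amount to careful bookkeeping in the orbit-category composition law, and once they are in place the graded and ungraded Morita theories do the rest with no substantial technical obstacle.
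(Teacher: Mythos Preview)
Your argument is correct and follows essentially the same route as the paper: for (\ref{grm1}) the paper simply observes that $M$ is an additive generator of the orbit category $\A/G$, and for (\ref{grm2}) it cites Proposition~\ref{efu}(\ref{sansansan}) to obtain $\md\A \simeq \md^G C$ compatibly with the $G$-action and grade shifts, which is exactly the graded-progenerator statement you spell out via Proposition~\ref{gracat}. The only cosmetic difference is that you unpack the isomorphism $F_gM \simeq M$ in $\A/G$ and the graded Morita step explicitly rather than quoting Proposition~\ref{efu}, which has the mild advantage that your version visibly uses only hypothesis \ref{g1} and Proposition~\ref{gracat}, whereas Proposition~\ref{efu} as stated carries the additional standing assumptions of $\rHom$-finiteness and \ref{g2}.
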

\begin{proof}
(1)  Since $C$ is the endomorphism algebra of an additive generator of the category $\A/G$, the assertion follows.\\
(2)  This follows from Proposition \ref{efu} (\ref{sansansan}).
\end{proof}


As a direct application of this graded projectivization, we present as an example the following graded version of the Auslander correspondence. For simplicity, we consider $\Z$-graded algebras. A graded algebra $\Lam$ is {\it{representation-finite}} if $\md^\Z\!\Lam$ has finitely many indecomposables up to grade shift. This is equivalent to the representation-finiteness of the ungraded algebra $\Lam$ \cite{GG}.
\begin{Prop}
There exists a bijection between the following.
\begin{enumerate}
\item\label{lam} The set of graded Morita equivalence classes of finite dimensional $\Z$-graded algebras $\Lam$ of finite representation type.
\item\label{gam} The set of graded Morita equivalence classes of finite dimensional $\Z$-graded algebras $\Gam$ with $\gd \Gam \leq 2 \leq \dd \Gam$.
\end{enumerate}
The correspondence is given as follows:
\begin{itemize}
\item From {\rm (\ref{lam})} to {\rm (\ref{gam})}: $\Gam=\End_\Lam(M)=\bigoplus_{n \in \Z}\Hom_\Lam(M,M(n))_0$ for a $(1)$-additive generator $M$ for $\md^\Z\!\Lam$.
\item From {\rm (\ref{gam})} to {\rm (\ref{lam})}:  $\Lam=\End_\Gam(Q)=\bigoplus_{n \in \Z}\Hom_\Gam(Q,Q(n))_0$ for a $(1)$-additive generator $Q$ for the category of graded projective-injective $\Gam$-modules.
\end{itemize}
\end{Prop}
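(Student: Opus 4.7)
The plan is to prove this as a graded analogue of Auslander's original correspondence (Theorem \ref{Au}), deducing the bijection by applying the graded projectivization of Proposition \ref{efu} to the category $\md^\Z\!\Lam$ equipped with its natural $\Z$-action by grade shifts.

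For the forward map, start with $\Lam$ of finite graded representation type. By \cite{GG} the ungraded algebra $\Lam$ is then also representation-finite, so $\A = \md^\Z\!\Lam$ with the action $F_n = (n)$ is a $(1)$-finite additive category in the sense of Section \ref{easy}. Choosing a $(1)$-additive generator $M \in \A$, Proposition \ref{efu} produces a finite-dimensional $\Z$-graded algebra $\Gam = \End_{\A/\Z}(M)$ and a shift-compatible equivalence $\md\A \simeq \md^\Z\!\Gam$; well-definedness up to graded Morita equivalence follows from Proposition \ref{grm}(\ref{grm2}). To verify $\gd\Gam \leq 2$, I would take $N \in \md\A \simeq \md^\Z\!\Gam$, lift a graded projective presentation of $N$ to a morphism $X_1 \to X_0$ in the abelian category $\md^\Z\!\Lam$, form its kernel $K$ there, and use that $K \in \md^\Z\!\Lam = \add\{M(n) \mid n \in \Z\}$ to obtain a third projective term under Yoneda, giving a resolution of length $2$. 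For $\dd\Gam \geq 2$, note that the injective objects of $\md\A$ are the Yoneda images $\Hom_\A(-, I)$ of injective $I \in \md^\Z\!\Lam$; any injective copresentation $0 \to M \to I^0 \to I^1$ has $I^0, I^1 \in \add\{M(n)\}$ by $(1)$-finiteness, so its Yoneda image is a projective-injective copresentation of $\Gam$.

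For the backward map, given $\Gam$ with $\gd\Gam \leq 2 \leq \dd\Gam$, let $\mathcal{J} \subset \md^\Z\!\Gam$ denote the full subcategory of graded projective-injective modules, which is stable under grade shift. By finite-dimensionality of $\Gam$ and the Krull-Schmidt property of $\md^\Z\!\Gam$, the dominant dimension condition provides an injective copresentation $0 \to \Gam \to I^0 \to I^1$ with $I^0, I^1 \in \mathcal{J}$ involving only finitely many indecomposables (up to shift); these assemble into a $(1)$-additive generator $Q$ of $\mathcal{J}$. Setting $\Lam = \End_\Gam(Q)$ defines a finite-dimensional $\Z$-graded algebra, and the graded version of the classical Auslander argument---using $\dd\Gam \geq 2$ to obtain a fully faithful embedding $\Hom_\Gam(Q, -)_0 \colon \md^\Z\!\Gam \to \md^\Z\!\Lam$ and $\gd\Gam \leq 2$ to control its essential image---forces $\Lam$ to be of finite graded representation type.

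That the two constructions are mutually inverse up to graded Morita equivalence follows from Proposition \ref{efu}(\ref{sansansan}): starting from $\Lam$, the graded projective-injectives of $\Gam = \End_\Lam(M)$ correspond via Yoneda to the graded injectives in $\md^\Z\!\Lam$, whose graded endomorphism algebra recovers $\Lam$. I expect the main technical obstacle to lie in the backward direction, specifically in verifying that the classical Auslander argument using dominant and global dimension carries through verbatim after keeping track of the $\Z$-grading on every functor and every syzygy involved; this is ultimately routine, but the interplay between the two dimension bounds and the $\Z$-action has to be checked with some care before one can conclude graded representation-finiteness of the reconstructed $\Lam$.
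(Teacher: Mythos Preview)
Your proposal is correct and matches the paper's approach: the paper's proof simply cites Proposition \ref{grm}(\ref{grm2}) for well-definedness and defers everything else to the classical argument in \cite[VI.5]{ARS}, which is precisely what you have unpacked in the graded setting. One small slip worth fixing: the representables $\Hom_\A(-,I)$ for injective $I \in \md^\Z\!\Lam$ are the \emph{projective-injective} objects of $\md\A$, not all injectives---but this is exactly what your dominant-dimension argument actually uses, so the conclusion stands.
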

\begin{proof}Note that $\Gam$ (resp. $\Lam$) does not depend on the choice of $M$ (resp. $Q$) by Proposition \ref{grm} (\ref{grm2}). The rest of the proof follows by the same argument as in Theorem \ref{Au}; see \cite[VI.5]{ARS}. \end{proof}

Notice that this correspondence $\Lam \leftrightarrow \Gam$ is the same as the ungraded case, thus it is a refinement of Theorem \ref{Au} on how much grading $\Lam$ or $\Gam$ have up to graded Morita equivalence.

\section{Uniqueness of triangle structures}\label{unithm}
The aim of this section is to prove some results which state the uniqueness of triangle structures on certain additive categories.
We say that an additive category {\it $\C$ has a unique algebraic triangle structure up to equivalence} if $\C_1=(\C,[1],\triangle)$ and $\C_2=(\C,[1]^\prime,\triangle^\prime)$ are algebraic triangle structures on $\C$, then there exists a triangle equivalence $F \colon \C_1 \xrightarrow{\simeq} \C_2$ such that $F(X) \simeq X$ in $\C$ for all $X \in \C$.

The following is the main result of this section.
\begin{Thm}\label{unique}
Let $\Lam$ be a ring with no simple ring summands such that $\K(\proj\Lam)$ is Krull-Schmidt. Then, the additive category $\K (\proj\Lam)$ has a unique algebraic triangle structure up to equivalence.
\end{Thm}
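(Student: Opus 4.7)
The plan is to combine Proposition \ref{aaaa} (to be proved separately in this section), which says that on any Krull-Schmidt additive category the shifts and mapping cones of any two triangle structures agree as objects, with Keller's DG Morita theorem for algebraic triangulated categories. The first ingredient is purely additive and captures the Krull-Schmidt rigidity; the second upgrades the object-level agreement to a triangle equivalence.

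Given a second algebraic triangle structure $(\C, [1]', \triangle')$ on $\C = \K(\proj\Lam)$, I would first invoke Proposition \ref{aaaa} to obtain $X[n]' \simeq X[n]$ in $\C$ for every $X \in \C$ and $n \in \Z$, together with an isomorphism of objects between corresponding mapping cones. The hypothesis that $\Lam$ has no simple ring summand enters the argument only through Proposition \ref{aaaa}: for a simple ring $S$ there are no nonzero morphisms in $\K(\proj S)$ between distinct indecomposables, so the suspension can be modified by an arbitrary permutation of indecomposables, and this forces the hypothesis.

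Next, fix a pretriangulated DG enhancement $\A$ of $(\C, [1]', \triangle')$, provided by algebraicity. Using $\Lam[n]' \simeq \Lam[n]$ from the previous step, the cohomology of the DG endomorphism algebra of $\Lam \in \A$ computes as
\[ H^n \mathbb{R}\!\End_\A(\Lam) \simeq \Hom_\C(\Lam, \Lam[n]') \simeq \Hom_\C(\Lam, \Lam[n]) = \begin{cases} \Lam & n=0, \\ 0 & n \neq 0, \end{cases} \]
so $\mathbb{R}\!\End_\A(\Lam)$ is formal, quasi-isomorphic to $\Lam$ placed in degree zero. Since $\Lam$ classically generates $\C$ under the standard structure, and Proposition \ref{aaaa} identifies shifts and cones of both structures on the level of objects, $\Lam$ is also a classical generator of $(\C, [1]', \triangle')$. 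Keller's Morita theorem then produces a triangle equivalence
\[ F = \mathbb{R}\!\Hom_\A(\Lam,-) \colon (\C,[1]',\triangle') \xrightarrow{\;\simeq\;} \K(\proj\Lam) \]
with its standard triangulated structure, sending $\Lam$ to $\Lam$.

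To conclude $F(X) \simeq X$ for all $X \in \C$, observe that $F(\Lam[n]') = \Lam[n] \simeq \Lam[n]'$ by Proposition \ref{aaaa}; for general $X$, present it as a finite iteration of shifts, cones, and direct summands of $\Lam$ (possible by classical generation), apply $F$, and use that the resulting cones in the target are isomorphic as objects to the $\triangle'$-cones, again by Proposition \ref{aaaa}. The main obstacle is Proposition \ref{aaaa} itself: establishing object-level uniqueness of shifts and cones on a Krull-Schmidt category from the bare triangulated-category axioms, via careful analysis of the Krull-Schmidt decomposition into indecomposables with local endomorphism rings, and exploiting precisely the ``no simple ring summand'' hypothesis to rule out the pathological shifts on $\K(\proj S)$ for $S$ simple.
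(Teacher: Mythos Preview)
Your approach is essentially the paper's own. The paper packages the Ext-vanishing and generation checks into Lemma \ref{tilt} (an object is tilting for one triangle structure on a purely non-semisimple Krull--Schmidt category iff it is tilting for the other, as an immediate consequence of Proposition \ref{aaaa}) and then invokes Keller's theorem in the form of Proposition \ref{Kel}; your direct computation of $H^*\mathbb{R}\!\End_\A(\Lam)$ and your generation argument simply unpack those two tilting conditions by hand, and your identification of where the no-simple-summand hypothesis enters (to make $\K(\proj\Lam)$ purely non-semisimple so that Proposition \ref{aaaa} applies) matches the paper exactly.
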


We give applications of Theorem \ref{unique}.
For a quiver $Q$, let $\Z Q$ be the associated infinite translation quiver \cite{ASS,Hap}, and let $k(\Z Q)$ be its mesh category \cite{Hap}.
\begin{Cor}\label{mesh}
Let $Q$ be a disjoint union of Dynkin quivers which does not contain $A_1$. Then, the mesh category $k(\Z Q)$ has a unique algebraic triangle structure up to equivalence.
\end{Cor}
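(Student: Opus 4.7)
The plan is to apply Theorem \ref{unique} to the path algebra $\Lam=kQ$, after identifying the (additive hull of the) mesh category $k(\Z Q)$ with $\K(\proj kQ)$. For the identification I will invoke Happel's classical theorem on the derived category of a Dynkin quiver \cite{Hap}: when $Q$ is a disjoint union of Dynkin quivers, the AR-quiver of $\der(\md kQ)$ is $\Z Q$, and the mesh relations yield a $k$-linear additive equivalence $k(\Z Q)\simeq\der(\md kQ)$; since $kQ$ is hereditary, the latter coincides with $\K(\proj kQ)$.

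Next I will verify the two hypotheses of Theorem \ref{unique} for $\Lam=kQ$. Since $Q$ is a finite disjoint union of Dynkin quivers, $kQ$ is a finite-dimensional (in particular semi-perfect) $k$-algebra, so $\K(\proj kQ)$ is Krull-Schmidt. Decomposing $Q=\bigsqcup_i Q_i$ into connected components gives $kQ=\prod_i kQ_i$, and a connected Dynkin component $Q_i$ produces a simple direct factor $kQ_i$ precisely when $Q_i=A_1$. The hypothesis that $Q$ contains no $A_1$-component therefore translates exactly into the absence of simple ring summands in $kQ$, and Theorem \ref{unique} applies to give the desired uniqueness.

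The main obstacle is essentially bookkeeping: one must ensure that Happel's equivalence is regarded as an equivalence of $k$-linear additive categories, so that the uniqueness statement for $\K(\proj kQ)$ transports back to $k(\Z Q)$. Happel's construction delivers exactly such an additive equivalence, so I anticipate no real difficulty beyond properly assembling these cited results. It is also worth observing why the exclusion of $A_1$ is essential for this approach: for $Q=A_1$ one has $kQ=k$, a simple ring, and Theorem \ref{unique} simply does not apply — so if one wanted to treat the $A_1$ case, a genuinely different argument would be required.
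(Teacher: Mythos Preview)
Your proposal is correct and follows essentially the same route as the paper: the paper's one-line proof simply invokes the additive equivalence $k(\Z Q)\simeq\K(\proj kQ)$ (noted just before the corollary via \cite[I.5.6]{Hap}) and then applies Theorem~\ref{unique}. Your write-up merely makes explicit the verification of the hypotheses of Theorem~\ref{unique}---that $kQ$ is finite dimensional so $\K(\proj kQ)$ is Krull--Schmidt, and that the exclusion of $A_1$ ensures $kQ$ has no simple ring summand---which the paper leaves to the reader.
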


As a consequence, we have the classification of $[1]$-finite algebraic triangulated categories.
\begin{Thm}\label{cor}
Let $k$ be an algebraically closed field. Any $[1]$-finite algebraic triangulated category over $k$ is triangle equivalent to the bounded derived category $\der(\md kQ)$ of the path algebra $kQ$ for a disjoint union $Q$ of Dynkin quivers of type A, D, and E.
\end{Thm}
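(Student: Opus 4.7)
The plan is to identify the underlying additive category of a given $[1]$-finite algebraic triangulated category $\T$ with the mesh category $k(\Z Q)$ for a disjoint union $Q$ of Dynkin quivers of type A, D, or E, and then to invoke Corollary~\ref{mesh} to upgrade this additive equivalence to a triangle equivalence with $\der(\md kQ)$. This last step uses that $\der(\md kQ)$ is itself algebraic with AR-quiver $\Z Q$ by classical Happel theory, so it realizes the desired algebraic triangle structure on $k(\Z Q)$.

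For the additive identification I would first record that $\T$ is Krull-Schmidt, being $\rHom$-finite and idempotent-complete over $k$. Via the graded projectivization of Proposition~\ref{efu}, the $[1]$-Auslander algebra $C=\bigoplus_{n\in\Z}\Hom_\T(M,M[n])$ satisfies $\T\simeq\proj^\Z\!C$; running the argument of Proposition~\ref{fineasy} in the graded setting shows that $C$ is self-injective (equivalently, $\T$ admits Auslander-Reiten triangles) and $(-1)$-twisted $3$-periodic. The shift $[1]$ then acts as an automorphism of the AR-quiver $\Gamma(\T)$ commuting with the translation, and the orbit quiver $\Gamma(\T)/[1]$ is finite by condition~\ref{shift}. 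By Riedtmann's structure theorem, each connected component of $\Gamma(\T)$ has the form $\Z Q/G$ for a tree $Q$ and an admissible group $G$. Condition~\ref{simconn} excludes shift-periodic indecomposables (if $X\simeq X[n]$ for some $n>0$ then $\Hom_\T(X,X[mn])\neq 0$ for all $m$), and this freeness of the $[1]$-action combined with finiteness of $\Gamma(\T)/[1]$ should pin $G$ down to the trivial group. The Dynkin conclusion on $Q$ then follows from \ref{simconn}: on a Euclidean or wild tree, the cumulative dimension $\sum_n \dim\Hom_\T(X,Y[n])$ would be infinite.

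Once $\Gamma(\T)\simeq \Z Q$ with $Q$ Dynkin is in hand, I would invoke standardness to identify $\T$ additively with $k(\Z Q)$: over an algebraically closed field and with $Q$ Dynkin, the AR-triangles together with the one-dimensionality of the irreducible morphism spaces in $\Z Q$ force the mesh relations on the nose, yielding a $k$-linear equivalence $\T\simeq k(\Z Q)$ by the classical argument of Bongartz-Gabriel and Riedtmann.

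Corollary~\ref{mesh} then asserts that any two algebraic triangle structures on $k(\Z Q)$ are equivalent, provided $Q$ has no $A_1$ component, so $\T\simeq \der(\md kQ)$ as triangulated categories. The $A_1$ factors excluded from Corollary~\ref{mesh} can be handled separately by applying Theorem~\ref{unique} directly to $\Lam=k$: the corresponding connected component of $\T$ is additively $\K(\proj k)$, hence triangle equivalent to $\K(\proj k)=\der(\md k)$. The main obstacle I anticipate is the Dynkin classification of the underlying tree of the AR-quiver: converting conditions~\ref{shift} and~\ref{simconn} into the combinatorial bound that excludes Euclidean and wild types, and simultaneously excluding nontrivial fundamental groups in the Riedtmann decomposition, is the technical heart of the argument.
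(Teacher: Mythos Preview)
Your approach is essentially the paper's: identify the AR-quiver as $\Z Q$ for a Dynkin $Q$, use Riedtmann standardness (Proposition~\ref{Ri}) to get the additive equivalence $\T\simeq k(\Z Q)$, and then invoke Corollary~\ref{mesh}. The paper simply quotes Proposition~\ref{AR} (the Xiao--Zhu classification of locally finite triangulated categories) for the first step, so the ``main obstacle'' you describe---ruling out non-trivial admissible groups in the Riedtmann form and excluding non-Dynkin trees---is already packaged there as a black-box citation. The detour through the $[1]$-Auslander algebra $C$ and its $(-1)$-twisted $3$-periodicity is not needed for this particular theorem.

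There is one genuine slip in your treatment of the $A_1$ components: Theorem~\ref{unique} explicitly requires $\Lam$ to have no simple ring summand, so you cannot apply it with $\Lam=k$. The purely non-semisimple hypothesis is essential there, since Lemma~\ref{rmin} needs a non-zero radical morphism to pin down the suspension, and on $\K(\proj k)$ there is none. The fix is elementary and does not go through Theorem~\ref{unique} at all: on a semisimple component every triangle splits, so the triangle structure is determined by the suspension alone; conditions~\ref{shift} and~\ref{simconn} force $[1]$ to act freely with finitely many orbits on the indecomposables of such a component, whence it is a finite product of copies of $\der(\md k)$. (The paper's own proof also passes over this boundary case in silence.)
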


Now we start the preparations for the proofs of the above results.
Recall that an additive category is {\it{Krull-Schmidt}} if any object is a finite direct sum of objects whose endomorphism rings are local. This is the case if the category is idempotent-complete and $\rHom$-finite over a complete Noetherian local ring. A Krull-Schmidt category $\C$ is {\it{purely non-semisimple}} if for each $X \in \C$, $J_\C(-,X) \neq 0$ or $J_\C(X,-)\neq 0$ holds. Note that these conditions are equivalent if $\C$ is triangulated.

First we observe that the suspension and the terms appearing in triangles in a triangulated category are determined by its additive structure under some Krull-Schmidt assumptions. 

\begin{Lem}\label{rmin}
Let $\C$ be a Krull-Schmidt additive category. Assume $\C$ has a structure of a triangulated category. Let $f \colon X \to Y$ be a right minimal morphism in $J_\C$.
\begin{enumerate}
\item\label{wkcok} The mapping cone of $f$ is the minimal weak cokernel of $f$.
\item\label{wkcokwkcok} $X[1]$ is the minimal weak cokernel of the minimal weak cokernel of $f$.
\end{enumerate}
\end{Lem}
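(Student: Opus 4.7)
My plan is to handle both parts using exactness of the long Hom-sequences attached to the distinguished triangle $X \xrightarrow{f} Y \xrightarrow{g} Z \xrightarrow{h} X[1]$, together with the axiom TR3 and the Jacobson-radical calculus afforded by the Krull–Schmidt hypothesis.

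For (\ref{wkcok}), it is automatic from the long exact sequence obtained by applying $\Hom_\C(-,W)$ to the triangle that $g$ is a weak cokernel of $f$. To show that $g$ is left minimal, I would take $\phi \colon Z \to Z$ with $\phi g = g$ and apply TR3 to the rotated triangle $Y \xrightarrow{g} Z \xrightarrow{h} X[1] \xrightarrow{-f[1]} Y[1]$, extending the commutative square $(1_Y,\phi)$ to a morphism of triangles $(1_Y,\phi,\beta)$. The commutativity of the last square yields $f[1]\,\beta = f[1]$, that is, $f\circ \beta[-1] = f$ after desuspension, and the right-minimality of $f$ then forces $\beta[-1]$, and hence $\beta$, to be an isomorphism. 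The triangulated five-lemma applied to $(1_Y,\phi,\beta)$ then gives that $\phi$ is an isomorphism.

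For (\ref{wkcokwkcok}), by (\ref{wkcok}) the minimal weak cokernel of $f$ is $g$ itself, and $h$ is again a weak cokernel of $g$ by the rotated triangle, so what remains is the left-minimality of $h$. The naive temptation is to apply (\ref{wkcok}) to $g$ in place of $f$, but this would require $g$ to be right minimal and to belong to $J_\C$, neither of which follows from our hypotheses (summands of the form $0 \to Y' \xrightarrow{\sim} Z' \to 0$ can be split off and spoil the radical membership of $g$). Instead, if $\phi \colon X[1] \to X[1]$ satisfies $\phi h = h$, then applying $\Hom_\C(-,X[1])$ to the triangle produces the exact piece $\Hom_\C(Y[1],X[1]) \xrightarrow{(f[1])^\ast} \Hom_\C(X[1],X[1]) \xrightarrow{h^\ast} \Hom_\C(Z,X[1])$, so $\phi - 1 = \psi \circ f[1]$ for some $\psi \colon Y[1] \to X[1]$. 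Since $f \in J_\C$ and $[1]$ is an autoequivalence, hence preserves $J_\C$, the element $\psi \circ f[1]$ lies in $J_{\End_\C(X[1])}$, and the semiperfectness of $\End_\C(X[1])$ (guaranteed by the Krull–Schmidt hypothesis) implies that $\phi = 1 + \psi \circ f[1]$ is a unit.

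The main obstacle is precisely this asymmetry: the clean TR3/five-lemma strategy transports the minimality hypothesis on $f$ to minimality of $g$, but the analogous rotation argument for $h$ would require information about $g$ that is not in the hypotheses, so one has to substitute the Jacobson-radical computation, using crucially that suspension preserves $J_\C$. All other ingredients — exactness of the long Hom-sequences, the triangulated five-lemma, and the fact that $1 + J(\End_\C(X))$ consists of units in a semiperfect endomorphism ring — are standard.
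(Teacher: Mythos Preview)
Your proof is correct, but it follows a different route from the paper's. The paper argues by contradiction via the Krull--Schmidt decomposition of morphisms: if $g$ fails to be left minimal, one splits off a direct summand $W$ of $Z$ on which $g$ vanishes, and then the triangle itself decomposes so that $h$ acquires a summand $W \xrightarrow{1_W} W$; this forces $f$ to have a summand $W[-1] \to 0$, contradicting right minimality. The argument for (\ref{wkcokwkcok}) is the same shifted one step, producing an identity summand of $f[1]$ and contradicting $f \in J_\C$. By contrast, you handle (\ref{wkcok}) by TR3 plus the triangulated five-lemma, transporting right minimality of $f$ to an isomorphism on the cone term, and you handle (\ref{wkcokwkcok}) by the long exact sequence and the observation that $\phi - 1$ factors through $f[1] \in J_\C$, hence $\phi \in 1 + J_{\End_\C(X[1])}$ is a unit by semiperfectness. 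The paper's approach is shorter and more geometric (everything is read off from how triangles split), while yours is more axiomatic and has the virtue of making explicit exactly which hypothesis is doing the work at each step --- in particular your radical computation in (\ref{wkcokwkcok}) cleanly isolates the role of $f \in J_\C$, whereas the paper's argument uses it only at the final contradiction. Your remark that one cannot simply iterate (\ref{wkcok}) because $g$ need be neither right minimal nor radical is well taken, and the paper sidesteps this by arguing symmetrically rather than inductively.
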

\begin{proof}
Complete $f$ to a triangle $X \xrightarrow{f} Y \xrightarrow{g} Z \xrightarrow{h} X[1]$. \\
(\ref{wkcok})  We have to show that $g$ is the minimal weak cokernel of $f$. We only have to show the (left) minimality. If this is not the case, then $h$ has a summand $W \xrightarrow{1_W} W$ for a common non-zero summand $W$ of $Z$ and $X[1]$. This contradicts the right minimality of $f$.\\
(\ref{wkcokwkcok})  We want to show that $h$ is the minimal weak cokernel of $g$. Again, we only have to show the minimality. If this is not the case, then $f[1]$ has a summand $V \xrightarrow{1_V} V$ for a common non-zero summand $V$ of $X[1]$ and $Y[1]$. This contradicts $f \in J_\C(X,Y)$.
\end{proof}

We deduce that the possible triangle structures on a given purely non-semisimple Krull-Schmidt additive category is roughly unique in the following sense.
We denote by $\mathrm{cone}_\triangle(f)$ the mapping cone of $f$ in a triangle structure $\triangle$.
\begin{Prop}\label{aaaa}
Let $\C$ be a purely non-semisimple Krull-Schmidt additive category. If $(\C,[1],\triangle)$ and $(\C,[1]^\prime,\triangle^\prime)$ are triangle structures on $\C$, then we have the following.
\begin{enumerate}
\item\label{wise} $X[1] \simeq X[1]^\prime$ for all objects $X \in \C$.
\item\label{corn} $\mathrm{cone}_{\triangle}(f) \simeq \mathrm{cone}_{\triangle^\prime}(f)$ in $\C$ for all morphisms $f$ in $\C$.
\end{enumerate}
\end{Prop}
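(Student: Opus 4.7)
The plan is to use Lemma \ref{rmin} to describe both $X[1]$ and the mapping cone of $f$ intrinsically from the additive structure of $\C$: since minimal weak (co)kernels depend only on the underlying additive category, it suffices to reduce $X[1]$, $X[1]^{\prime}$, $\mathrm{cone}_{\triangle}(f)$, and $\mathrm{cone}_{\triangle^{\prime}}(f)$ to the same iterated minimal weak cokernel construction, after which agreement between the two structures follows automatically.

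For (\ref{wise}), first decompose $X$ into indecomposables to reduce to the case $X$ indecomposable and nonzero. The purely non-semisimple hypothesis then yields a nonzero morphism $f \colon X \to Y$ in $J_\C$, and since $\End_\C X$ is local and $f \neq 0$, any such $f$ is automatically right minimal: if $fg = f$ for some $g \in \End_\C X$, then $g-1$ annihilates $f$ and hence lies in $J_{\End_\C X}$, forcing $g$ to be a unit. Applying Lemma \ref{rmin}(\ref{wkcokwkcok}) in both triangle structures then identifies $X[1]$ and $X[1]^{\prime}$ with the minimal weak cokernel of the minimal weak cokernel of $f$, giving $X[1] \simeq X[1]^{\prime}$.

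For (\ref{corn}), reduce $f$ by two standard Krull--Schmidt moves. First, a Gauss-elimination argument on the matrix of $f$ with respect to decompositions into indecomposables produces isomorphisms $X \cong X^{\prime} \oplus U$ and $Y \cong Y^{\prime} \oplus U$ under which $f$ corresponds to $f^{\prime} \oplus 1_U$ with $f^{\prime} \in J_\C(X^{\prime},Y^{\prime})$; since $\mathrm{cone}(1_U) = 0$ in any triangle structure, this reduces the claim to $f^{\prime}$. Next, split $X^{\prime} = X_0 \oplus X_1$ with $X_1$ maximal such that $f^{\prime}$ vanishes on $X_1$, so that $f^{\prime} = (f_0, 0)$ with $f_0 \colon X_0 \to Y^{\prime}$ right minimal in $J_\C$, and $\mathrm{cone}(f^{\prime}) \simeq \mathrm{cone}(f_0) \oplus X_1[1]$. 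Lemma \ref{rmin}(\ref{wkcok}) identifies $\mathrm{cone}(f_0)$ with the minimal weak cokernel of $f_0$, a purely additive notion, and part (\ref{wise}) handles the $X_1[1]$ summand, yielding $\mathrm{cone}_\triangle(f) \simeq \mathrm{cone}_{\triangle^{\prime}}(f)$. The real content is packaged in Lemma \ref{rmin}; the remaining Krull--Schmidt bookkeeping is routine, so no further obstacle is expected.
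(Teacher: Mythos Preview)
Your proof is correct and follows essentially the same approach as the paper's: reduce to indecomposable $X$ for part (\ref{wise}) and apply Lemma~\ref{rmin}(\ref{wkcokwkcok}) to a nonzero radical map, then for part (\ref{corn}) strip off identity summands, split off a right minimal piece, and apply Lemma~\ref{rmin}(\ref{wkcok}) together with part (\ref{wise}). You supply a bit more detail than the paper (the local-endomorphism argument for right minimality, and the Gauss elimination), but the strategy is identical.
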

\begin{proof}
(1)  Let $X \in \C$ be an indecomposable object. Since $\C$ is purely non-semisimple, there exists a non-zero morphism $f \colon X \to Y$ in $J_\C$. Then, $f$ is a right minimal radical map, and hence the assertion follows from Lemma \ref{rmin} (\ref{wkcokwkcok}).\\
(2)  Let $f \colon X \to Y$ be an arbitrary morphism in $\C$. By removing the summands isomorphic to $W \xrightarrow{1} W$, which does not affect the mapping cone, we may assume $f \in J_\C$.  Then, $f$ has a decomposition $X_1 \oplus X_2 \xrightarrow{(f_1,0)} Y$ with right minimal $f_1 \in J_\C$ and the mapping cone of $f$ is the direct sum of that of $f_1$ and $X_2[1]$. Now the mapping cone of $f_1$ is determined by Lemma \ref{rmin} (\ref{wkcok}) and since $\C$ is purely non-semisimple, $[1]$ is determined by the additive structure by (1). This proves the assertion.
\end{proof}

For Theorem \ref{unique}, we need the following result of Keller on algebraic triangulated categories.
\begin{Prop}\cite[4.3]{Ke}\label{Kel}
Let $\T$ be an algebraic triangulated category and $T \in \T$ be a tilting object. Then, there exists a triangle equivalence $\T \simeq \K (\proj \End_{\T}(T))$. 
\end{Prop}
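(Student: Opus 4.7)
The plan is to exploit a DG enhancement of $\T$ and reduce the problem to a formality statement for the endomorphism DG algebra of $T$. Since $\T$ is algebraic, fix a pretriangulated DG category $\mathcal{B}$ with a triangle equivalence $H^0(\mathcal{B}) \simeq \T$, lift $T$ to an object of $\mathcal{B}$, and form the endomorphism DG algebra $B := \mathcal{B}(T,T)$. By construction $H^n(B) = \Hom_\T(T, T[n])$, which by the tilting hypothesis vanishes for $n \neq 0$ and equals $A := \End_\T(T)$ for $n = 0$.

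Next I would show that $B$ is quasi-isomorphic, as a DG algebra, to $A$ concentrated in degree zero. The intelligent truncation $\tau_{\leq 0}B$ is a DG subalgebra, since $Z^0(B)$ is closed under multiplication and $B^{<0}$ is a two-sided ideal in it, and it sits in a zig-zag of DG-algebra quasi-isomorphisms $B \hookleftarrow \tau_{\leq 0}B \twoheadrightarrow H^0(B) = A$. Consequently derived extension of scalars yields a triangle equivalence $\der(B) \simeq \der(A)$ matching compact objects, so the thick subcategory generated by the free module is identified with $\K(\proj A)$. The DG-Yoneda functor $\mathcal{B}(T,-)\colon \mathcal{B} \to \mathcal{D}_{dg}(B)$ then descends on $H^0$ to a triangle functor $F\colon \T \to \der(B) \simeq \der(A)$ with $F(T) \simeq A$, and
\[
\Hom_{\der(A)}(F(T), F(T)[n]) \;=\; H^n(B) \;=\; \Hom_\T(T, T[n])
\]
shows that $F$ is fully faithful on shifts of $T$. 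A standard d\'evissage (the five-lemma applied to triangles completing morphisms between finite sums of shifts of $T$) extends this to the thick subcategory generated by $T$, which is $\T$ itself by the generating condition built into the definition of a tilting object. The essential image is a thick subcategory of $\der(A)$ containing $A$ and contained in the compact objects, hence equals $\K(\proj A)$, giving the required triangle equivalence.

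The main obstacle is the DG-categorical bookkeeping: one must verify that $F$ is a genuine triangle functor (not merely a graded functor on cohomology), and that the identification $\der(B) \simeq \der(A)$ is coherent enough that $F(T)$ corresponds to $A$ on the nose in a way that lets the d\'evissage run. Once the formality of $B$ and the DG-enhancement machinery are in place, the rest reduces to the standard compact-generation criterion for derived equivalences.
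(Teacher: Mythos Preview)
The paper does not supply a proof of this proposition: it is quoted verbatim as a result of Keller \cite[4.3]{Ke} and used as a black box in the proof of Theorem~\ref{unique}. So there is no ``paper's own proof'' to compare against.

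That said, your sketch is a faithful reconstruction of the standard argument behind Keller's theorem. The key steps---passing to a DG enhancement, observing that the endomorphism DG algebra $B$ has cohomology concentrated in degree $0$ by the tilting hypothesis, using the truncation zig-zag $B \hookleftarrow \tau_{\leq 0}B \twoheadrightarrow A$ to establish formality, and then running the d\'evissage to identify $\T$ with the perfect complexes over $A$---are exactly the ingredients in Keller's proof. Your caveats about the DG bookkeeping are apt but not obstacles: the functor $F$ is induced by a DG functor and hence is genuinely triangulated, and the identification $\der(B)\simeq\der(A)$ comes from restriction/induction along a chain of quasi-isomorphisms of DG algebras, which preserves the free module. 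Nothing is missing; this is precisely the approach the paper is invoking by citation.
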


Note that we have the following observation, which will be crucial for the proof.
\begin{Lem}\label{tilt}
Let $\C$ be a purely non-semisimple Krull-Schmidt additive category. Assume $\C_1=(\C,[1],\triangle)$ and $\C_2=(\C,[1]^\prime,\triangle^\prime)$ are triangle structures on $\C$. Then, an object $T \in \C$ is a tilting object in $\C_1$ if and only if it is a tilting object in $\C_2$.
\end{Lem}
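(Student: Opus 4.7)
The plan is to show that both defining conditions of a tilting object---namely (i) $\Hom_\C(T,T[n])=0$ for every $n\neq 0$ and (ii) the smallest thick subcategory containing $T$ equals $\C$---depend only on data preserved by Proposition \ref{aaaa}, and then to invoke that proposition directly. Since purely non-semisimplicity of $\C$ is exactly the hypothesis of Proposition \ref{aaaa}, no further assumptions are needed.

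First I would verify, by an easy induction on $|n|$ using Proposition \ref{aaaa}(\ref{wise}) and the fact that $[1]$ and $[1]^\prime$ are autoequivalences of $\C$, that $X[n] \simeq X[n]^\prime$ in $\C$ for every $X \in \C$ and every $n \in \Z$. Applied to $X=T$, this yields $\Hom_\C(T,T[n]) \simeq \Hom_\C(T,T[n]^\prime)$ as abelian groups, so condition (i) holds in $\C_1$ if and only if it holds in $\C_2$.

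Next I would argue that the smallest thick subcategories $\U_1,\U_2 \subseteq \C$ of $\C_1,\C_2$ generated by $T$ agree as replete full subcategories of $\C$. Each can be built from $\{T\}$ by iteratively closing under the respective shift and its inverse, under cones of morphisms between already-included objects, and under direct summands. By the first step, shift-closure yields the same isomorphism classes in $\C_1$ and $\C_2$; by Proposition \ref{aaaa}(\ref{corn}), cone-closure does too, because the cone of any morphism in $\C$ is the same object up to isomorphism in either triangle structure; and direct-summand closure is intrinsic to the additive structure. Induction on the construction stages then identifies $\U_1$ and $\U_2$, so condition (ii) for $\C_1$ is equivalent to that for $\C_2$.

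Combining these equivalences proves the lemma. The argument is entirely formal on top of Proposition \ref{aaaa}; the only mild care needed is the inductive bookkeeping in the construction of $\U_i$, for which one uses that Proposition \ref{aaaa}(\ref{corn}) applies to \emph{every} morphism in $\C$, not merely minimal ones. I do not anticipate any essential obstacle beyond this.
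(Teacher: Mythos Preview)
Your proposal is correct and follows essentially the same approach as the paper: both arguments reduce the two tilting conditions to Proposition \ref{aaaa}, using (\ref{wise}) for the vanishing of $\Hom_\C(T,T[n])$ and (\ref{corn}) for the generation condition. You have simply spelled out in more detail the induction on $|n|$ for shifts and the stage-by-stage construction of the thick subcategory, which the paper leaves implicit.
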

\begin{proof}
Indeed, we have $\C_1(T,T[n])=\C_2(T,T[n]^\prime)$ by Proposition \ref{aaaa} (\ref{wise}), which shows that the vanishing of extensions does not depend on the triangle structure. Also, by Proposition \ref{aaaa} (\ref{corn}), $T$ generates $\C_1$ if and only if $T$ generates $\C_2$. This shows the assertion.
\end{proof}

Now we are ready to prove our results.
\begin{proof}[Proof of Theorem \ref{unique}] Assume $\C$ is triangulated. We show that $\C$ is triangle equivalent to $\cK=\K (\proj\Lam)$ by finding a tilting object whose endomorphism algebra  is $\Lam$. Fix an additive equivalence $\C \simeq \cK$. Then, $\cK$ is purely non-semisimple and Krull-Schmidt by our assumption on $\Lam$. Let $T \in \C$ be the object corresponding to $\Lam \in \cK$. Then, $T$ is a tilting object by Lemma \ref{tilt} and clearly $\End_\C(T)=\Lam$. By our assumption that $\C$ is algebraic, we deduce that $\C$ is triangle equivalent to $\cK$ by Proposition {\ref{Kel}}.
\end{proof}

For the proof of Corollary \ref{mesh}, let us recall the following standardness theorem of Riedtmann.
\begin{Prop}\cite{Rie}\label{Ri}
Let $k$ be a field and $\T$ be a $k$-linear, $\rHom$-finite idempotent-complete triangulated category whose AR-quiver is $\Z Q$ for some acyclic quiver $Q$. Assume the endomorphism algebra of an indecomposable object of $\T$ is $k$. Then, $\T$ is $k$-linearly equivalent to the mesh category $k(\Z Q)$.
\end{Prop}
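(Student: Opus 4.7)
The plan is to construct a $k$-linear functor $F \colon k(\Z Q) \to \T$ and verify that it is an equivalence. First, for each vertex $x$ of $\Z Q$ I would choose a representative indecomposable $M_x \in \T$ lying in the isomorphism class that $x$ parametrises; this is possible because $\T$, being $\rHom$-finite and idempotent-complete, is Krull-Schmidt, and its indecomposables are in bijection with the vertices of the AR-quiver. Then $F$ will be essentially surjective by construction, since every object of $\T$ is a finite sum of the $M_x$.

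Next I assign an irreducible morphism $f_\a \colon M_x \to M_y$ in $\T$ to each arrow $\a \colon x \to y$ of $\Z Q$. Under the hypothesis $\End_\T(M_x) = k$ for each indecomposable, the space of irreducible maps corresponding to an AR-quiver arrow is one-dimensional, so $f_\a$ is determined up to a non-zero scalar. To make the choices globally coherent I would fix a section $\Sigma \subset \Z Q$ (available because $Q$ is acyclic), choose $f_\a$ arbitrarily for the arrows incident to $\Sigma$, and propagate outward using AR-triangles: for each non-projective vertex $y$, the AR-triangle
\[ M_{\tau y} \xrightarrow{(f_\a)_\a} \bigoplus_{\a \colon \tau y \to z} M_z \xrightarrow{(f_{\a^*})_\a} M_y \to M_{\tau y}[1] \]
fixes the $f_{\a^*}$ for arrows $\a^* \colon z \to y$ once the $f_\a$ on the left are fixed, up only to overall rescaling of $M_y$ which I absorb into the choice of the isomorphism $M_y \simeq M_y$. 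The vanishing of the composition in this triangle is precisely the mesh relation $\sum_\a f_{\a^*} f_\a = 0$ in $\T$, so the resulting assignment descends from the path category of $\Z Q$ through the mesh ideal to define a $k$-linear functor $F \colon k(\Z Q) \to \T$.

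For full faithfulness I use the standard fact that in a Krull--Schmidt triangulated category with AR-triangles, every radical morphism $M_x \to M_y$ between indecomposables factors through the middle term of the AR-triangle ending at $M_y$; this is the defining universal property of AR-triangles together with $\End_\T(M_y) = k$, which ensures the canonical map from the middle term is not split. Iterating this factorisation, every morphism in the radical between indecomposables is a $k$-linear combination of compositions of the $f_\a$'s, so $F$ is full. For faithfulness I would compare $\dim_k \Hom_{k(\Z Q)}(x,y)$ and $\dim_k \Hom_\T(M_x, M_y)$: both are finite, both admit filtrations by the length of AR-triangle paths ending at $y$, and the associated gradeds are controlled by the same combinatorial data of $\Z Q$ modulo the mesh relations. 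Since the mesh relations already hold in $\T$ and $F$ is surjective on each filtration piece, the two dimensions must coincide and $F$ is an isomorphism on each Hom.

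The main obstacle is the coherent rigidification of the $f_\a$'s in the construction step. The acyclicity of $Q$ is essential: it forces $\Z Q$ to be simply connected as a translation quiver, so that the rescaling freedoms at the various $M_x$ can be fixed simultaneously without encountering monodromy around closed paths. The assumption $\End_\T(M_x) = k$ ensures that the only ambiguity at each irreducible morphism is an overall scalar, which the tree-like structure of $\Z Q$ permits us to trivialise along a spanning subquiver. Once this rigidification is carried out, the remaining verifications are a formal consequence of Auslander--Reiten theory and the comparison of dimensions above.
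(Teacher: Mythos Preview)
The paper does not prove this proposition; it is quoted as a known result of Riedtmann \cite{Rie} and used as a black box (to identify $\K(\proj kQ)$ with $k(\Z Q)$ and later to identify $[1]$-finite triangulated categories with mesh categories). So there is no proof in the paper to compare your attempt against.

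That said, your sketch is essentially Riedtmann's original argument: choose indecomposables, rigidify irreducible maps by propagating along almost split sequences from a slice, and use the simple connectedness of $\Z Q$ to kill the scalar ambiguities. Fullness via iterated factorisation through sink maps is also standard. The one place where your argument is too loose is faithfulness: ``$F$ is surjective on each filtration piece, so the dimensions coincide'' does not follow, because surjections between filtration quotients do not by themselves force equality of total dimension. The clean way to finish is to observe that the mesh relations in $k(\Z Q)$ give short exact sequences of representable functors mirroring the AR-triangles in $\T$, so $\dim_k k(\Z Q)(x,-)$ and $\dim_k \T(M_x,-)$ satisfy the same additive recursion on $\Z Q$; with matching initial values on a slice, induction along the translation gives equality. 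Alternatively, one can invoke that $k(\Z Q)$ is itself a category with almost split sequences and appeal to the uniqueness of such a structure on a given translation quiver.
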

A well known application of this result is an equivalence $\K(\proj kQ) \simeq k(\Z Q)$ for a Dynkin quiver $Q$ \cite[I. 5.6]{Hap}.
\begin{proof}[Proof of Corollary \ref{mesh}] Since $k(\Z Q) \simeq \K(\proj kQ)$ as additive categories, Theorem \ref{unique} gives the result. \end{proof}

A $k$-linear triangulated category $\T$ is {\it{locally finite}} \cite{XZ} if for each indecomposable $X \in \T$, we have $\sum_{Y : \text{indec.}}\dim_k\Hom_\T(X,Y) < \infty$. This condition is equivalent to its dual \cite{XZ}. Clearly, our $[1]$-finite triangulated categories are locally finite.
The classification of $[1]$-finite triangulated category depends on the following result.
\begin{Prop}\cite[2.3.5]{XZ}\label{AR}
Let $k$ be an algebraically closed field and $\T$ be a locally finite triangulated category which does not contain a non-zero finite triangulated subcategory. Then, the AR-quiver of $\T$ is $\Z Q$ for a disjoint union $Q$ of Dynkin quivers of type A, D, and E.
\end{Prop}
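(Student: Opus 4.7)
My plan follows four conceptual steps. First, I would establish the existence of almost split (AR) triangles in $\T$. Local finiteness immediately implies $\rHom$-finiteness, so $\T$ is a $\rHom$-finite Krull-Schmidt triangulated category. For each indecomposable $X$, the contravariant functor $D\Hom_\T(X,-)$ is nonzero on only finitely many indecomposables and is therefore finitely presented; this should be enough to produce a Serre functor on $\T$, and hence AR-triangles exist by Reiten--Van den Bergh.

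Second, the existence of AR-triangles makes the AR-quiver a stable translation quiver, so Riedtmann's structure theorem gives that each connected component has the form $\Z\Delta/\Pi$ for a directed tree $\Delta$ and an admissible group $\Pi$ of automorphisms of $\Z\Delta$ acting freely. I would then rule out $\Pi \neq 1$: components for which $\Z\Delta/\Pi$ has finitely many vertices are excluded by the hypothesis that $\T$ contains no non-zero finite triangulated subcategory (the shift $[1]$ permutes the finitely many indecomposables of such a component, so the thick subcategory it generates has only finitely many indecomposables), while components of ``tube type'' with infinitely many vertices are excluded by local finiteness, since a standard calculation in the mesh category shows that $\Hom$-spaces along the tube remain nonzero for infinitely many vertices. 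Hence each component must have the form $\Z\Delta$.

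Finally, local finiteness together with the standard analysis of the mesh category $k(\Z\Delta)$ forces $\Delta$ to be a Dynkin quiver of type A, D, or E: for a non-Dynkin tree, $\dim_k \Hom_{k(\Z\Delta)}(X,Y)$ is nonzero for infinitely many indecomposables $Y$ for any fixed $X$, contradicting local finiteness. The main obstacle is the case analysis in the middle step, where careful arguments combining local finiteness with the interaction between the shift $[1]$ and the AR-translate $\tau = S \circ [-1]$ are needed to exclude the various quotient forms $\Z\Delta/\Pi$, particularly infinite tubes.
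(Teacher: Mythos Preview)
The paper does not give its own proof of this proposition; it is quoted directly from \cite[2.3.5]{XZ}. Your outline is broadly the strategy of Xiao--Zhu, so there is nothing to compare against in the present paper. That said, two of your steps contain genuine gaps that would need to be filled before the argument is complete.

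First, in your treatment of a finite component $C$ of the AR-quiver, you assert that $[1]$ permutes the indecomposables of $C$ and that the thick subcategory generated by $C$ is finite. Neither claim is automatic. The suspension $[1]$ is an autoequivalence, so it permutes AR-components, but a priori it could send $C$ to a different finite component; and even if $C$ were $[1]$-stable, you would still need $\add C$ to be closed under cones. Both of these follow once you know that AR-components coincide with the blocks of $\T$ in the $\Hom$-connectedness sense (so that $\add C$ is a direct factor of $\T$, hence closed under cones, and $\Hom_\T(X,SX)\neq 0$ forces $S$, and therefore $[1]=\tau^{-1}S$, to preserve $C$). This ``components $=$ blocks'' statement is itself one of the substantial lemmas in \cite{XZ} and should be isolated explicitly; it is not a formality.

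Second, your appeal to the mesh category $k(\Z\Delta)$ to force $\Delta$ to be Dynkin presupposes that the component is standard, which you have not established and which is not needed. The robust argument, and the one used in \cite{XZ}, is via additive functions: for a fixed indecomposable $M$ the function $d(X)=\dim_k\Hom_\T(M,X)$ is additive on the stable translation quiver (from the long exact sequence attached to each AR-triangle) and has finite support by local finiteness; the Happel--Preiser--Ringel theorem then forces the tree class to be Dynkin. Doing this first also dissolves your ``infinite tube'' subcase: once $\Delta$ is Dynkin, every nontrivial admissible quotient $\Z\Delta/\Pi$ is finite, so only the finite-component case remains to be excluded.
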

\begin{proof}[Proof of Theorem \ref{cor}]
The AR-quiver of a $[1]$-finite triangulated category is $\Z Q$ for some Dynkin quiver $Q$ by Proposition \ref{AR}. Moreover, it is equivalent to $k(\Z Q)$ by Proposition \ref{Ri}. Thus Corollary \ref{mesh} applies. \end{proof}

We end this section by noting the following lemma, which we use later.
This lemma states in particular, that for mesh categories, the suspension is unique up to isomorphism of functors
\begin{Lem}\label{susp}
Let $Q$ be a Dynkin quiver and $\a$ be an automorphism of the mesh category $k(\Z Q)$ such that $\a X \simeq X$ for all $X \in k(\Z Q)$. Then, $\a$ is isomorphic as functors to the identity functor.
\end{Lem}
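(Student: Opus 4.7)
The plan is to construct a natural isomorphism $\eta\colon \id \xrightarrow{\sim} \alpha$ by a gauge (cocycle) argument. For each indecomposable $X\in k(\Z Q)$ (a vertex of $\Z Q$) choose an isomorphism $\phi_X\colon X\to\alpha X$, which exists by hypothesis. For Dynkin $Q$ the space $\Hom_{k(\Z Q)}(X,Y)$ is at most one-dimensional, as follows for instance from Happel's equivalence $k(\Z Q)\simeq\K(\proj kQ)$, so for each arrow $a\colon X\to Y$ of $\Z Q$ one can write
\[
  \alpha(a)\;=\;c_a\cdot \phi_Y\circ a\circ \phi_X^{-1}
\]
for a unique $c_a\in k^\times$. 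Replacing $\phi_X$ by $\mu_X\phi_X$ changes $c_a$ into $c_a\mu_X\mu_Y^{-1}$. Since the mesh category is generated by its arrows modulo mesh relations, it suffices to find $\mu_X\in k^\times$ with $c_a=\mu_Y/\mu_X$ for every arrow $a\colon X\to Y$; equivalently, to realize the cocycle $c$ as a coboundary in the $k^\times$-valued simplicial cohomology of the underlying graph $|\Z Q|$.

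Next, I extract constraints on $c$ from the fact that $\alpha$ is a functor. For each non-projective vertex $X$ with $\tau X=Z$ and middle terms $Y_1,\dots,Y_r$, applying $\alpha$ to the mesh relation $\sum_i f_ig_i=0$ yields, after conjugating by the $\phi$'s,
\[
  \sum_i c_{f_i}c_{g_i}\cdot f_ig_i\;=\;0 \quad\text{in }\Hom_{k(\Z Q)}(Z,X).
\]
The cycles of $|\Z Q|$ are generated by the mesh four-cycles $Z-Y_i-X-Y_j-Z$. When the mesh has two middle terms (all of type $A$, and all non-branching meshes in types $D,E$) the displayed identity directly gives $c_{f_1}c_{g_1}=c_{f_2}c_{g_2}$, which is exactly the vanishing of $c$ around the corresponding four-cycle. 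Fixing any base vertex and propagating $\mu_X$ along a spanning tree of $|\Z Q|$ then trivialises $c$ and yields $\eta_X=\mu_X\phi_X$ as the desired natural isomorphism.

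The main obstacle is the case of a mesh with three middle terms, occurring at the branching vertex in types $D$ and $E$: the mesh provides only one scalar relation among the three products $c_{f_i}c_{g_i}$, whereas the corresponding part of $H_1(|\Z Q|,\Z)$ has rank two. I plan to resolve this by combining the constraint with those coming from the meshes adjacent to the branching vertex, which share individual arrows $f_i,g_i$ and thereby overdetermine the scalars. Equivalently, one may transfer through Happel's equivalence $k(\Z Q)\simeq\K(\proj kQ)$, choose a summand-wise isomorphism $\psi\colon kQ\to\alpha(kQ)$, and observe that the induced algebra automorphism of $\End(kQ)\cong kQ$ fixes every primitive idempotent; since $Q$ is a tree, any such automorphism is inner, so after rescaling $\psi$ one may arrange that $\alpha$ acts as the identity on $\End(kQ)$, and the natural isomorphism then propagates from the tilting object to all of $k(\Z Q)$ via the equivalence.
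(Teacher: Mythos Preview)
Your gauge/cocycle approach is essentially the inductive construction the paper's one-line proof has in mind, but one error creates an imaginary obstacle. The claim that $\Hom_{k(\Z Q)}(X,Y)$ is at most one-dimensional for all indecomposables is false: precisely at a branching vertex $X$ (a mesh with three middle terms) one has $\dim_k\Hom_{k(\Z Q)}(\tau X, X)=2$. What \emph{is} true, and is all you need to define the scalars $c_a$, is one-dimensionality when there is an arrow $X\to Y$; this holds because $\Z Q$ for a tree $Q$ admits a grading with every arrow of degree $1$, so between adjacent vertices the arrow is the only path. Your $r=3$ ``obstacle'' now disappears: since $f_1g_1,f_2g_2,f_3g_3$ span the $2$-dimensional space $\Hom(\tau X,X)$ subject only to the mesh relation $\sum_i f_ig_i=0$, any two of them are linearly independent, and hence your displayed identity $\sum_i c_{f_i}c_{g_i}\,f_ig_i=0$ forces all three products $c_{f_i}c_{g_i}$ to coincide --- exactly the two constraints required. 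With this correction the first argument is complete.

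Your alternative route through the tilting object $kQ\in\K(\proj kQ)$ has a genuine gap: after arranging that $\alpha$ acts as the identity on $\End(kQ)$, you assert that the natural isomorphism ``propagates via the equivalence'', but $\alpha$ is only an additive automorphism and need not commute with cones or shifts, so there is no evident mechanism to extend the isomorphism from $\add(kQ)$ to the whole category.
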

\begin{proof} Since $Q$ is Dynkin, we can inductively construct a natural isomorphism between $\a$ and $\mathrm{id}$. \end{proof}

\section{$[1]$-Auslander correspondence}\label{main}
In this section, we prove the second main result Theorem \ref{int1} of this paper. In the first subsection, we give the correspondence from triangulated categories to algebras, and the converse one in the second subsection. We will prove the main theorem in the final subsection.
\subsection{From triangulated categories to algebras}
We apply the graded projectivization prepared in Section \ref{easy} to triangulated categories. Let $\T$ be a $k$-linear, $\rHom$-finite, idempotent-complete triangulated category. Consider the action on $\T$ of $G=\Z$, generated by the suspension $[1]$. Then, the $G$-finiteness in this case are
\begin{enumerate}
\def\theenumi{(S{\arabic{enumi}})}
\def\labelenumi{\theenumi}
\item\label{s1} There is $M \in \T$ such that $\T=\add\{M[n] \mid n \in \Z \}$.
\item\label{s2} For any $X,Y \in \T$, $\Hom_\T(X,Y[n])=0$ for almost all $n$.
\end{enumerate}
According to the terminology in Section \ref{easy}, we say $\T$ is {\it{$[1]$-finite}}, and call $M$ as in \ref{s1} a {\it{$[1]$-additive generator}}.

The following theorem gives the correspondence from triangulated categories to algebras.
\begin{Prop}\label{easy one}
Let $\T$ be a $k$-linear, $\rHom$-finite, idempotent-complete, triangulated category which is $[1]$-finite. Let $M \in \T$ be a $[1]$-additive generator and set $C=\End_{\T/[1]}(M)$. Then, $C$ is a finite-dimensional graded self-injective algebra such that $\Om^3L \simeq L(-1)$ for any graded $C$-module $L$.
\end{Prop}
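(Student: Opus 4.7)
The plan is to transport every relevant structure across the equivalence $\md\T \simeq \md^\Z\!C$ given by Proposition \ref{efu} and then use the triangles of $\T$ to build explicit injective coresolutions of graded $C$-modules. Finite-dimensionality of $C$ is immediate: by \ref{s2} only finitely many graded pieces $\Hom_\T(M, M[n])$ are nonzero, and each is finite-dimensional by the $\rHom$-finiteness of $\T$. For graded self-injectivity of $C$ I will appeal to \cite[4.2]{Kr} (already used in Proposition \ref{fineasy}), which says that $\md\T$ is a Frobenius category with projective-injective objects exactly the representables $\T(-, X)$; transporting this via the equivalence makes $\md^\Z\!C$ Frobenius as well, and in particular $C$ is self-injective as a graded $k$-algebra.

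The main step is the syzygy statement $\Om^3 L \simeq L(-1)$, which I prove on the $\T$-side. Given $L \in \md\T$, choose a projective presentation $\T(-, Y) \to \T(-, Z) \to L \to 0$; by Yoneda it is induced by a morphism $f \colon Y \to Z$ of $\T$. Complete $f$ to a triangle $X \to Y \to Z \to X[1]$ and apply the cohomological functor $\T(-, -)$ to obtain the long exact sequence
\[
\T(-, Y) \to \T(-, Z) \to \T(-, X[1]) \to \T(-, Y[1]) \to \T(-, Z[1]) \to \T(-, X[2]).
\]
Identifying $L$ with $\coker(\T(-, Y) \to \T(-, Z))$ and computing that the cokernel of $\T(-, Y[1]) \to \T(-, Z[1])$ equals $F_1 L$ (which uses only the formula $F_1 L (W) = L(W[-1])$), this yields the $4$-term exact sequence
\[
0 \to L \to \T(-, X[1]) \to \T(-, Y[1]) \to \T(-, Z[1]) \to F_1 L \to 0
\]
in $\md\T$, where $F_1$ denotes the autoequivalence induced by $[1]$. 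Under Proposition \ref{efu}(\ref{sansansan}), $F_1 L$ corresponds to the grade shift $L(1)$ in $\md^\Z\!C$.

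Since the three middle terms are projective-injective in the Frobenius category $\md^\Z\!C$, this exhibits $L(1)$ as the third cosyzygy $\Om^{-3} L$ in $\smd^\Z\!C$. Applying $\Om^3$ and using that $\Om$ commutes with the autoequivalence $(1)$ then gives $\Om^3 L \simeq L(-1)$, as required. The only real obstacle is bookkeeping: reconciling the convention $F_g M = M \circ F_g^{-1}$ on $\md\T$ with the grade shift $(g)$ on $\md^\Z\!C$, and checking exactness at each term of the spliced sequence. Both are essentially formal consequences of the exactness of $\T(-, -)$ on the rotations of a triangle.
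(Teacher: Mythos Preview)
Your proof is correct and follows essentially the same strategy as the paper's: both arguments extract the syzygy statement from the long exact sequence obtained by applying $\T(-,-)$ to a triangle built from a projective presentation of $L$. The only cosmetic difference is that you read off a three-step injective coresolution $0 \to L \to \T(-,X[1]) \to \T(-,Y[1]) \to \T(-,Z[1]) \to F_1L \to 0$ and conclude $\Om^{-3}L \simeq L(1)$, whereas the paper shifts in the other direction and reads off a projective resolution yielding $\Om^3L \simeq L(-1)$ directly; these are equivalent since $C$ is self-injective.
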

\begin{proof}$C$ is finite dimensional by \ref{s2}. Also, since $\md\T \simeq \md^\Z\!C$ by Proposition \ref{efu} (\ref{fun}) and $\md\T$ is Frobenius, $C$ is self-injective. It remains to show the statement on the third syzygy. Let $L$ be a graded $C$-module and let $Q \to R \to L \to 0$ be a projective presentation of $L$ in $\md^\Z\!C$. Take the map $X \to Y$ in $\T$ corresponding to $Q \to R$ and complete it to a trianle $W \to X \to Y \to W[1]$.  Put $P_Z=\bigoplus_{n \in \Z}\Hom_\T(M,Z[n])$ for each $Z \in \T$. This is the graded projective $C$-module corresponding to $Z$. Note that $P_{Z[1]}=P_Z(1)$, where $(1)$ is the grade shift functor on $\md^\Z\!C$. The triangle above yields an exact sequence $P_X(-1) \to P_Y(-1) \to P_W \to P_X \to P_Y \to P_W(1)$. Since $P_X=Q$ and $P_Y=R$, we see that $\Om^3L \simeq L(-1)$.
\end{proof}

\begin{Ex}
Let $Q$ be a Dynkin quiver and $\T =\der(\md kQ)$. Let $M$ be an additive generator for $\md kQ$. Then, $M$ is a $[1]$-additive generator for $\T$ and we have $C=\End_{kQ}(M) \oplus \Ext_{kQ}^1(M,M)$. The degree 0 part of $C$ is the Auslander algebra of $\md kQ$.

Let $Q'$ be another Dynkin quiver with the same underlying graph $\Delta$ as $Q$. Since $kQ$ and $kQ'$ are derived equivalent, we have $\der (\md kQ')=\T$. Similarly as above, an additive generator $M'$ for $\md kQ'$ is a $[1]$-additive generator for $\T$. The corresponding graded algebra $C'$ is $\End_{kQ'}(M') \oplus \Ext_{kQ'}^1(M',M')$, with the Auslander algebra of $\md kQ'$ in the degree 0 part.

By Lemma \ref{grm}, $C$ and $C'$ are isomorphic as ungraded algebras (but not as graded algebras).
In this way, $C \simeq C'$ contains the Auslander algebras of module categories over $\Delta$ for any orientation of $\Delta$.
\end{Ex}

Let us give a more specific example.
\begin{Ex}\label{a3}
Let $Q$ be the following Dynkin quiver of type $A_3$, and $\T$ be its derived category $\der(\md kQ)$.
\[ \xymatrix@C=5mm{a & b \ar[l] & c \ar[l]. } \]
Then, the AR-quiver of $\T$ is as follows:
\[ {
   \xymatrix@!C@!R@C=1.2pt@R=1.2pt{ & \cdots & \circ\ar[dr] & & 1 {} \ar[dr] & & \ 3 {} \ \ar[dr] & & 6 {} \ar[dr] & & 4[1]\ar[dr] & & \cdots \\
                            \cdots & \circ\ar[dr]\ar[ur] & & \circ \ar[dr]\ar[ur] & & 2 {} \ar[dr]\ar[ur] & & 5 {} \ar[dr]\ar[ur] & & 2[1]\ar[dr]\ar[ur] & & \circ\ar[dr]\ar[ur] \\
& \cdots & \circ\ar[ur] & & \circ \ar[ur] & & 4 {} \ar[ur] & & 1[1]\ar[ur] & & 3[1]\ar[ur] & & \cdots, } 
}\]
where $1, \ldots, 6$ denotes the objects from $\md kQ$. Take $M=\bigoplus_{i=1}^6 M_i$, where $M_i$ is the indecomposable $kQ$-module corresponding to the vertex $i$. Then, $C=\End_{kQ}(M) \oplus \Ext_{kQ}^1(M,M)$. It is easily verified that $C$ is presented by the quiver $\Z A_3/[1]$ and the mesh relations. The quiver of $C$ looks as follows:
\[ \xymatrix@C=3mm@R=2mm{ & & 4 \ar[dl]\ar@{.}[ll] & & 1 \ar[dl]\ar@{.}[ll] & & 3 \ar[dl]\ar@{.}[ll] & & 6 \ar[dl]\ar@{.}[ll] & & 4 \ar[dl]\ar@{.}[ll] && \ar@{.}[ll]\\
                         & \ar@{.}[l] & & 5 \ar@{.}[ll]\ar[ul]\ar[dl] & & 2 \ar@{.}[ll]\ar[ul]\ar[dl] & & 5 \ar[ul]\ar[dl]\ar@{.}[ll] & & 2\ar@{.}[ll]\ar[ul]\ar[dl] & & \ar@{.}[ll]\ar[ul]\ar[dl] & \ar@{.}[l] \\
                          & & 3 \ar[ul]\ar@{.}[ll] & & 6 \ar[ul]\ar@{.}[ll] & & 4 \ar[ul]\ar@{.}[ll] & & 1 \ar[ul]\ar@{.}[ll] & & 3 \ar[ul]\ar@{.}[ll] &&\ar@{.}[ll] .} \]
where the vertices with the same number are identified, with mesh relations along the dotted lines. The arrows $1 \to 5$ and $2 \to 6$ have degree 1 and all the others have degree 0.

Now, let $Q'$ be the quiver obtained by reflecting $Q$ at vertex $a$;
\[ \xymatrix@C=5mm{a \ar[r] & b & c \ar[l]. } \]
Fix an equivalence $\der(\md kQ') \simeq \der(\md kQ)$ so that $M'=M_2 \oplus \cdots \oplus M_6 \oplus M_1[1]$ is an additive generator for $\md kQ'$. Then, $C'=\End_{kQ'}(M') \oplus \Ext_{kQ'}^1(M',M')$ is presented by the same quiver with relations as $C$, with arrows $2 \to 1$ and $2 \to 6$ having degree 1 and all the others degree 0. Thus $C \simeq C'$ as ungraded algebras but not as graded algebras.

Nevertheless, $C$ and $C'$ are graded Morita equivalent. Here we give a direct equivalence $\md^\Z\!C \to \md^\Z\!C'$. Let $e_i$ be the idempotent of $C$ corresponding to $M_i \ (1 \leq i \leq 6)$ and set $P=e_2C \oplus \cdots \oplus e_6C \oplus e_1C(1)$. Then, we have $\End_C(P) \simeq C'$ as graded algebras and $\Hom_C(P,-)$ gives a desired equivalence.
\end{Ex}

\subsection{From algebras to triangulated categories}
We can give the converse correspondence as in Section \ref{8}. Setting $a=-1$ in the following theorem gives the result.
\begin{Prop}\label{not easy one}
Let $A$ be a finite dimensional graded algebra such that $A/J_A$ is separable over $k$ and $\Om^3S \simeq S(a)$ for any graded simple module $S$. Then, $\proj^\Z\!A$ has a structure of a triangulated category. If $k$ is algebraically closed and $a \neq 0$, then the suspension is isomorphic to $(-a)$ and the algebraic triangle structure on $\proj^\Z\!A$ is unique up to equivalence.
\end{Prop}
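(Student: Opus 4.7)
The plan is to apply Amiot's criterion (Proposition \ref{Ami}) to a bimodule coresolution of $A$ obtained from the periodicity hypothesis, and then to pin down the suspension up to natural isomorphism by identifying $\proj^\Z\!A$ with a Dynkin mesh category.

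For the existence of the triangle structure, Corollary \ref{grgss} promotes the periodicity $\Om^3 S \simeq S(a)$ on graded simples to bimodule periodicity: $A$ is self-injective, and there exists a graded algebra automorphism $\alpha$ of $A$ with $\Om^3_{A^e}(A) \simeq {}_1 A_\alpha(a)$ in $\md^\Z\!A^e$ and $P_\alpha \simeq P$ for every $P \in \proj^\Z\!A$. Since $A^e$ is also self-injective, I extend the length-three bimodule resolution of $A$ downward into a coresolution
\[
0 \to A \to Q^0 \to Q^1 \to Q^2 \to {}_1 A_{\alpha^{-1}}(-a) \to 0
\]
with $Q^i$ projective bimodules, using $\Om^{-3}_{A^e}(A) \simeq {}_1 A_{\alpha^{-1}}(-a)$ as the inverse of the given periodicity. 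Tensoring over $A$ yields an exact sequence of functors
\[
0 \to 1 \to X^0 \to X^1 \to X^2 \to S \to 0
\]
on $\md^\Z\!A$, where $X^i := - \otimes_A Q^i$ takes values in $\proj^\Z\!A$ and $S := (-)_{\alpha^{-1}}(-a)$ is an autoequivalence of $\proj^\Z\!A$. Since $\md(\proj^\Z\!A) \simeq \md^\Z\!A$ is Frobenius by self-injectivity, Proposition \ref{Ami} furnishes a triangle structure on $\proj^\Z\!A$ with suspension $S$.

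For the refined statements under $k$ algebraically closed and $a \neq 0$, the object-wise identification $S(P) = P_{\alpha^{-1}}(-a) \simeq P(-a)$ is immediate from $P_{\alpha^{-1}} \simeq P$. To upgrade this to an isomorphism of functors I plan to identify the additive category $\T := \proj^\Z\!A$ with a Dynkin mesh category. The object $M := A \oplus A(1) \oplus \cdots \oplus A(|a|-1)$ is a $[1]$-additive generator since the suspensions $M[n] \simeq M(-na)$ sweep out every grade shift of $A$ by modular arithmetic in $\Z/|a|\Z$, while the vanishing condition on $\Hom_\T(X, Y[n])$ for large $|n|$ follows from $\dim_k A < \infty$. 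Proposition \ref{AR} then gives that the AR-quiver of $\T$ is $\Z Q$ for a disjoint union $Q$ of Dynkin quivers; after passing to a basic Morita representative, Proposition \ref{Ri} identifies $\T$ $k$-linearly with the mesh category $k(\Z Q)$. Consequently the autoequivalence $F := S \circ (a)$, which sends every object to itself, is isomorphic to the identity by Lemma \ref{susp}, yielding $S \simeq (-a)$ as functors. For uniqueness of the algebraic triangle structure, the additive equivalence $\T \simeq k(\Z Q) \simeq \K(\proj kQ)$ due to Happel, together with the absence of semisimple ring summands in $kQ$, allows me to invoke Theorem \ref{unique}.

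The main obstacle I foresee lies in verifying the hypothesis of Proposition \ref{Ri}: the endomorphism ring of an indecomposable $eA(n)\in\T$ equals $(eAe)_0$, which over the algebraically closed field $k$ is a priori only local with residue field $k$, rather than $k$ itself. Excluding a non-trivial radical in degree zero likely requires exploiting the non-triviality of $a$ together with the Dynkin structure of the AR-quiver provided by Proposition \ref{AR}, and may be isolated as a preparatory lemma or bypassed by first showing that the triangle structure on $\proj^\Z\!A$ is algebraic so that Theorem \ref{cor} can be applied directly.
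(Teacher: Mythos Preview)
Your proposal follows essentially the same route as the paper: Corollary \ref{grgss} yields the bimodule coresolution, Proposition \ref{Ami} gives the triangle structure with suspension $(-)_{\alpha}(-a)$ (the paper writes $\alpha$ rather than $\alpha^{-1}$, a cosmetic difference of orientation), and then $[1]$-finiteness together with Propositions \ref{AR}, \ref{Ri}, Lemma \ref{susp}, and Corollary \ref{mesh} (equivalently Theorem \ref{unique}) handle the refined claims. The obstacle you anticipate is not serious and is glossed over in the paper as well: once Proposition \ref{AR} gives the AR-quiver $\Z Q$ with $Q$ Dynkin, this quiver has no oriented cycles, and in a locally finite triangulated category every radical morphism factors along the AR-quiver, so $J_\T(X,X)=0$ and hence $\End_\T(X)=k$ for indecomposable $X$ over algebraically closed $k$.
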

\begin{proof}By Corollary {\ref{grgss}}, $A$ is self-injective and there exists an exact sequence
\[ \xymatrix{ 0 \ar[r] & A \ar[r] & P^0 \ar[r] & P^1 \ar[r] & P^2 \ar[r] & {}_1A_{\a}(-a) \ar[r] & 0 } \]
in $\md^\Z\!A^e$, where $P^i, i=0,1,2$ are projectives, and $\a$ is a graded algebra automorphism of $A$ such that $P_\a \simeq P$ for all $P \in \proj^\Z\!A$. Then, we can apply Proposition {\ref{Ami}} for $\P=\proj^\Z\!A$, $X^i=-\otimes_A P^i$ and $S=(-)_\a(-a)$ to see that $\proj^\Z\!A$ is triangulated with suspension $(-)_\a(-a)$. 
Now assume $k$ is algebraically closed and $a \neq 0$. Since we have $\Hom_{\proj^\Z\!A}(X,Y(-na))=0$ for almost all $n \in \Z$ for each $X, Y \in \proj^\Z\!A$, the triangulated category $\proj^\Z\!A$ is $[1]$-finite, and therefore, it is equivalent to the mesh category $k(\Z Q)$ for some Dynkin diagram $Q$ by Propositions \ref{AR} and \ref{Ri}. Then, by changing the triangle structure if necessary, $\proj^\Z\!A$ has a structure of an algebraic triangulated category, which is unique up to equivalence by Corollary {\ref{mesh}}. Also, $(-)_\a(-a)$ and $(-a)$ are isomorphic as functors by Lemma \ref{susp}.
\end{proof}

\subsection{Proof of Theorem \ref{int1}}
Combining the previous results, we can now prove the second main result of this paper.
\begin{proof}[Proof of Theorem \ref{int1}]For $M$ as in (\ref{T}), $C$ is as stated in (\ref{A}) by Proposition {\ref{easy one}}. Also, the graded Morita equivalence class of $C$ does not depend on the choice of $M$ by Proposition \ref{grm}. 
This shows the well-definedness of (\ref{T}) to (\ref{A}).

For the map from ({\ref{A}}) to ({\ref{T}}), it is well-defined since $\proj^\Z\!C$ has the unique structure of an algebraic triangulated category up to equivalence by Proposition {\ref{not easy one}}.

It is easily checked that these maps are mutually inverse.

The bijection between (\ref{T}) and (\ref{Dyn}) is Proposition \ref{AR} and Corollary {\ref{cor}}.
\end{proof}

\begin{Rem}
The algebra $C$ in Theorem \ref{int1} satisfies $[3] \simeq (1)$ as functors on $\smd^\Z\!C$ by Proposition \ref{easy one}.
\end{Rem}

\section{Applications to Cohen-Macaulay modules}\label{app}
Applying our classification in Theorem \ref{cor} of $[1]$-finite triangulated categories, we show that the stable categories $\sCM^\Z\!\Lam$ of some CM-finite Iwanaga-Gorenstein algebras, in particular, of (commutative) graded simple singularities are triangle equivalent to the derived categories of Dynkin quivers.

A Noetherian algebra $\Lam$ is {\it{Iwanaga-Gorenstein}} if $\id_\Lam \Lam=\id_{\Lam^\op}\Lam < \infty$. A typical example of Iwanaga-Gorenstein algebra is given by commutative Gorenstein rings of finite Krull dimension.
For an Iwanaga-Gorenstein algebra $\Lam$, we have the category
\[ \CM\Lam=\{ X \in \md\Lam \mid \Ext_\Lam^i(X,\Lam)=0 \ \text{for all} \ i >0 \} \]
of {\it{Cohen-Macaulay}} $\Lam$-modules. It is naturally a Frobenius category and we have a triangulated category $\sCM\Lam$.

Now consider the case $\Lam$ is graded: let $\Lam=\bigoplus_{n \geq 0}\Lam_n$ is a positively graded Noetherian algebra such that each $\Lam_n$ is finite dimensional over a field $k$. 
If $\Lam$ is a graded Iwanaga-Gorenstein algebra, we similarly have the category
\[ \CM^\Z\!\Lam =\{ X \in \md^\Z\!\Lam \mid \Ext_\Lam^i(X,\Lam)=0 \ \text{for all} \ i >0 \} \]
of graded Cohen-Macaulay modules. It is again Frobenius and hence the stable category $\sCM^\Z\!\Lam$ is triangulated.
A graded Iwanaga-Gorenstein algebra is {\it{CM-finite}} if $\CM^\Z\Lam$ has finitely many indecomposable objects up to grade shift. 

We now show that CM-finite Iwanaga-Gorenstein algebras give a large class of examples of $[1]$-finite triangulated categories.
\begin{Prop}\label{sing}
Let $\Lam$ be a positively graded CM-finite Iwanaga-Gorenstein algebra with $\gd\Lam_0 < \infty$. Then, the triangulated category $\sCM^\Z\!\Lam$ is $[1]$-finite.
\end{Prop}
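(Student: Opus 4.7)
My plan is to verify the two conditions (S1) and (S2) for $\T = \sCM^\Z\Lam$. By CM-finiteness, I first fix indecomposable non-projective graded CM modules $N_1, \ldots, N_r$ representing every indecomposable of $\sCM^\Z\Lam$ modulo grade shift; so every indecomposable of $\sCM^\Z\Lam$ is of the form $N_i(j)$ for some $i \in \{1,\ldots,r\}$ and $j \in \Z$. The argument will then reduce to the following structural assertion, which I plan to prove as the main step.

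\emph{Key lemma.} There exist $m \ge 1$ and nonzero integers $d_1, \ldots, d_r$ such that $\Omega^m N_i \simeq N_i(-d_i)$ in $\sCM^\Z\Lam$ for every $i$; equivalently, since $[1] = \Omega^{-1}$, $[m] N_i \simeq N_i(d_i)$.

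I would prove this lemma in two parts. The syzygy $\Omega$ is an autoequivalence of $\sCM^\Z\Lam$ commuting with the grade shift $(1)$, so it induces a permutation of the $r$ grade-shift orbits of indecomposables; taking $m$ to be a period of this finite permutation yields $\Omega^m N_i \simeq N_i(-d_i)$ for some integers $d_i$. That $d_i \ne 0$ would then follow from positivity of the grading: a shift-free relation $\Omega^m N_i \simeq N_i$ in $\sCM^\Z\Lam$ would force the minimal graded projective resolution of $N_i$ in $\md^\Z\Lam$ to be periodic without any degree shift, contradicting the fact that, because $\Lam$ is positively graded with $J_\Lam \supsetneq J_{\Lam_0}$, iterated syzygies push the generator-degrees of the minimal cover strictly upward. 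The hypothesis $\gd\Lam_0 < \infty$ enters in ensuring that $\sCM^\Z\Lam$ is a $\rHom$-finite Krull--Schmidt triangulated category on which this degree-theoretic bookkeeping is valid.

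Granting the key lemma, (S1) is immediate: setting $D = \max_i |d_i|$ and $M = \bigoplus_{i=1}^r \bigoplus_{j=0}^{D-1} N_i(j)$, for any indecomposable $N_i(\ell)$ I write $\ell = qd_i + j$ with $0 \le j < |d_i| \le D$, which gives $N_i(\ell) \simeq N_i(j)(qd_i) \simeq [qm] N_i(j)$, exhibiting it as a $[1]$-shift of a summand of $M$. For (S2), I would use the key lemma to convert a $[1]$-shift question into a grade-shift question: for fixed indecomposables $X = N_a(p)$ and $Y = N_b(q)$ and $n = sm + t$ with $0 \le t < m$, one obtains $\Hom_{\sCM^\Z\Lam}(X, Y[n]) = \Hom_{\sCM^\Z\Lam}(N_a(p), ([t]N_b)(q + sd_b))$. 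Since $d_b \ne 0$ and, by $\rHom$-finiteness of $\sCM\Lam$ (from CM-finiteness and $\gd\Lam_0 < \infty$), only finitely many grade-shifts of the fixed object $[t]N_b$ admit nonzero stable Hom from $N_a(p)$, this vanishes for almost all $s$ and hence for almost all $n$.

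The principal obstacle will be the key lemma, especially the nonvanishing $d_i \ne 0$. Rigorously connecting positivity of the grading, the stable category $\sCM^\Z\Lam$, and the growth of generator-degrees in minimal graded projective resolutions calls for care, and the exact role of $\gd\Lam_0 < \infty$ in supplying the necessary $\rHom$-finiteness and rigidity of $\sCM^\Z\Lam$ will also need to be pinned down.
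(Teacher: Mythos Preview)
Your overall architecture matches the paper's: both hinge on showing that some power $\Omega^m$ acts on each indecomposable as a \emph{nonzero} grade shift, and that stable $\Hom$ vanishes for extreme shifts. The paper, however, reverses your order: it establishes (S2) first and reads off $d_i \neq 0$ as a consequence of (S2), whereas you attempt $d_i \neq 0$ first and then deduce (S2).

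The real gap is in your deduction of (S2). You invoke ``$\rHom$-finiteness of $\sCM\Lambda$'' (the \emph{ungraded} stable category) to conclude that only finitely many grade shifts of a fixed object receive nonzero stable $\Hom$. But that ungraded $\rHom$-finiteness is exactly the statement that $\sHom_\Lambda(X,Y(s))_0 = 0$ for almost all $s$, which, granted your key lemma, is equivalent to (S2); so the argument is circular as written. Nothing in the hypotheses makes the ungraded $\sCM\Lambda$ obviously $\rHom$-finite: $\Lambda$ may be infinite-dimensional (e.g.\ a simple singularity), and CM-finiteness by itself does not force finite-dimensional stable $\Hom$. The paper supplies the missing work in two halves: for $n \gg 0$ it proves directly (Lemma~\ref{nex}) that $\Hom_\Lambda(X,\Omega^n Y)_0 = 0$, and for $n \ll 0$ it uses AR duality in the \emph{graded} stable category---where $\rHom$-finiteness is automatic from $\dim_k \Lambda_n < \infty$---to reduce to the first half.

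Relatedly, you have misplaced the role of $\gd\Lambda_0 < \infty$. It is not needed for $\sCM^\Z\Lambda$ to be $\rHom$-finite Krull--Schmidt; that already follows from each $\Lambda_n$ being finite-dimensional. Its actual job is precisely your degree claim: the degree-$0$ strand of the minimal graded projective resolution of $Y$ is a $\Lambda_0$-projective resolution of $Y_0$, so if $\gd\Lambda_0 < \infty$ it terminates, and inductively the generators of $\Omega^n Y$ are eventually pushed into arbitrarily high degree. The condition $J_\Lambda \supsetneq J_{\Lambda_0}$ is neither sufficient nor the right hypothesis here; without $\gd\Lambda_0 < \infty$ the syzygy degrees can stagnate indefinitely. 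Once you correct this, your key-lemma sketch \emph{is} the paper's Lemma~\ref{nex}, and you can use it to prove the $n \gg 0$ half of (S2) directly and then invoke graded AR duality for $n \ll 0$, bypassing the ungraded $\rHom$-finiteness claim entirely.
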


To prove this, we need an observation for general Noetherian algebras, which is motivated by \cite[3.5]{Ya}.
Let us fix some notations. We denote by $\Ext_\Lam^i(-,-)_0$ the Ext groups on $\md^\Z\!\Lam$. Note that for $M, N \in \md^\Z\!\Lam$, the Ext groups on $\md \Lam$ are graded $k$-vector spaces: $\Ext_\Lam^i(M,N)=\bigoplus_{n \in \Z}\Ext_\Lam^i(M,N(n))_0$, $(i \geq 0)$. For each $M \in \md^\Z\!\Lam$ and $n \in \Z$, we denote by $M_{\geq n}$ the $\Lam$-submodule of $M$ consisting of components of degree $\geq n$.

\begin{Lem}\label{nex}
Let $\Lam$ be a positively graded Noetherian algebra with $\gd\Lam_0 < \infty$. Then, for any $X,Y \in \md^\Z\!\Lam$, we have $\Hom_\Lam(X,\Om^nY)_0=0$ for sufficiently large $n$.
\end{Lem}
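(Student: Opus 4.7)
The plan is to reduce the statement to the following: for every fixed $d \in \Z$, one has $(\Om^n Y)_d = 0$ for $n \gg 0$. This reduction is immediate, since choosing homogeneous generators $x_1,\dots,x_r$ of $X$ of degrees $e_1,\dots,e_r$ makes any degree-$0$ map $X \to \Om^n Y$ determined by the tuple $(f(x_i)) \in \bigoplus_i (\Om^n Y)_{e_i}$, so it suffices to kill each $(\Om^n Y)_{e_i}$ for $n$ large.

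To establish the reduced claim, I would fix a minimal graded projective resolution $\cdots \to P_1 \to P_0 \to Y \to 0$ in $\md^\Z\!\Lam$, write $\alpha(M) := \min\{i : M_i \neq 0\}$, and record the monotonicity $\alpha(Y) \leq \alpha(\Om Y) \leq \alpha(\Om^2 Y) \leq \cdots$, which follows from $\Om^n Y \subseteq P_{n-1}$ and $\alpha(P_{n-1}) = \alpha(\Om^{n-1} Y)$. Then proceed by induction on $d$: the base case $d < \alpha(Y)$ is vacuous, and in the inductive step the hypothesis applied to every $d' \in [\alpha(Y), d)$ produces an integer $N$ such that $\alpha(\Om^n Y) \geq d$ for all $n \geq N$.

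The crux is the following observation: once $\alpha(\Om^n Y) \geq d$, the degree-$d$ part of the short exact sequence $0 \to \Om^{n+1} Y \to P_n \to \Om^n Y \to 0$ is a \emph{minimal $\Lam_0$-projective cover}. Writing $P_n = \bigoplus_j \Lam(-d_{n,j})$ with $d_{n,j} \geq \alpha(P_n) \geq d$, only summands with $d_{n,j} = d$ contribute in degree $d$, each as a copy of $\Lam_0$, so $(P_n)_d$ is $\Lam_0$-free. The $\Lam$-minimality of the resolution forces $(\Om^{n+1} Y)_d \subseteq (P_n J_\Lam)_d = (P_n)_d J_{\Lam_0} + (P_n \Lam_{>0})_d$, and the second summand vanishes since $(P_n)_{d-i} = 0$ for $i > 0$. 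Hence the kernel lies in $(P_n)_d J_{\Lam_0}$, proving minimality over $\Lam_0$ and identifying $(\Om^{n+1} Y)_d \cong \Om_{\Lam_0}((\Om^n Y)_d)$.

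Iterating gives $(\Om^{n+k} Y)_d \cong \Om^k_{\Lam_0}((\Om^n Y)_d)$ for all $k \geq 0$, and taking $k = g + 1$ with $g := \gd \Lam_0$ then forces $(\Om^{n+g+1} Y)_d = 0$, closing the induction. The main conceptual obstacle is establishing that the degree-$d$ sequence is a \emph{minimal} $\Lam_0$-projective cover, the point where positivity of the grading and the $\Lam$-minimality of the resolution must be used together; once that is settled, the finite global dimension of $\Lam_0$ simply truncates the $\Lam_0$-syzygy tower. The argument implicitly assumes that minimal graded projective covers exist in $\md^\Z\!\Lam$, which is harmless in the paper's intended applications since there $\Lam_0$ is finite-dimensional, hence semiperfect.
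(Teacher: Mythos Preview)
Your proof is correct and follows essentially the same strategy as the paper: both establish that, once the minimal graded projective resolution of $Y$ has been pushed past degree $d$, its degree-$d$ part is a minimal $\Lam_0$-projective resolution, so $\gd\Lam_0<\infty$ forces the degree-$d$ components to vanish for $n\gg0$. The only cosmetic differences are that the paper shifts $Y$ to reduce to the bottom degree rather than inducting on $d$, and it concludes via $\Hom_\Lam(X,P_n)_0=0$ (using that $\Hom_\Lam(X,\Lam)$ is bounded below) instead of your reduction through homogeneous generators of $X$; your treatment of the $\Lam_0$-minimality is in fact more explicit than the paper's.
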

\begin{proof}
Take a minimal graded projective resolution of $Y$: $\cdots \to P_2 \to P_1 \to P_0 \to Y \to 0$. We will show that for each $i \in \Z$, $P_n=(P_n)_{\geq i}$ holds for $n\gg0$. For this, it suffices to show that $P_n=(P_n)_{\geq 1}$ for $Y=Y_{\geq 0}$. Note that the degree 0 part of the minimal projective resolution of $Y$ yields a $\Lam_0$-projective resolution of $Y_0$. By our assumption that $\gd\Lam_0 < \infty$, we have $(P_n)_0=0$, hence $(P_n)_{\geq 1}=P_n$ for sufficiently large $n$. 
Now,  we have $\Hom_\Lam(X,\Lam(-n))_0=\Hom_\Lam(X,\Lam)_{-n}=0$ for $n \gg 0$. Indeed, this is certainly true if $X$ is projective. For general $X$, take a surjection $P \twoheadrightarrow X$ from a projective module $P$. Then we have an injection $\Hom_\Lam(X,\Lam) \hookrightarrow \Hom_\Lam(P,\Lam)$ and our assertion follows from the case $X$ is projective.
Therefore, we conclude that $\Hom_\Lam(X,P_n)_0=0$, thus $\Hom_\Lam(X,\Om^{n+1}Y)_0=0$ for sufficienly large $n$.
\end{proof}

\begin{proof}[Proof of Proposition \ref{sing}]
We verify the conditions \ref{shift} and \ref{simconn}. 

First we show \ref{simconn}: $\sHom_\Lam(X,\Om^nY)_0= 0$ for almost all $n$ for each $X,Y \in \underline{\CM}^\Z(\Lam)$.
The case $n \gg 0$ is done in Lemma \ref{nex}, so it remains to prove the case $n \ll 0$. Since $\Lam$ is CM-finite, $\sCM^\Z\!\Lam$ has the AR duality, and we have $D\sHom(X,\Om^nY)_0 \simeq \sHom(Y,\Om^{-n-1}\tau X)_0$, hence the assertion follows from the case of $n \gg 0$.

Next we show \ref{shift}: $\sCM^\Z\!\Lam$ has only finitely many indecomposables up to suspension.
Since $\Lam$ is of finite CM type, there exists $0 \neq n \in \Z$ such that $\Om^nX \simeq X$ up to grade shift for any indecomposable $X \in \sCM^\Z\!\Lam$. By \ref{simconn}, $\Om^nX$ and $X$ are not actually isomorphic in $\sCM^\Z\!\Lam$. Therefore, $\sCM^\Z\!\Lam$ has only fintely many indecomposables up to $\Om^n$, in particular up to $\Om^{-1}$.

These assertions show that $\sCM^\Z\!\Lam$ is $[1]$-finite.
\end{proof}

As an application of Theorem \ref{cor}, we immediately obtain the following result.
\begin{Thm}\label{CM}
Let $k$ be algebraically closed and let $\Lam=\bigoplus_{n \geq 0}\Lam_n$ is a positively graded Iwanaga-Gorenstein algebra such that each $\Lam_n$ is finite dimensional over $k$. Suppose $\Lam$ is CM-finite and $\gd\Lam_0<\infty$. Then, the AR-quiver of $\sCM^\Z\!\Lam$ is $\Z\Delta$ for a disjoint union $\Delta$ of some Dynkin diagrams of type A, D and E. Moreover, $\sCM^\Z\!\Lam$ is triangle equivalent to $\der(\md kQ)$ for any orientation $Q$ of $\Delta$.
\end{Thm}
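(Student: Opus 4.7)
The plan is to deduce this theorem almost immediately by combining the two main ingredients already established: the verification of $[1]$-finiteness for CM-finite Iwanaga-Gorenstein algebras in Proposition \ref{sing}, and the classification of $[1]$-finite algebraic triangulated categories in Theorem \ref{cor}.

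First I would check that $\sCM^\Z\!\Lam$ satisfies all the standing hypotheses on $\T$ in Theorem \ref{cor}. It is $k$-linear and $\rHom$-finite because each $\Lam_n$ is finite dimensional over $k$ and morphisms in $\sCM^\Z\!\Lam$ are computed as subquotients of finite dimensional morphism spaces in $\md^\Z\!\Lam$. It is idempotent-complete since $\CM^\Z\!\Lam$ is Krull-Schmidt (modules are finitely generated over a graded Noetherian algebra, each graded piece being finite dimensional), and Krull-Schmidt is inherited by the stable category. It is algebraic because $\CM^\Z\!\Lam$ is by construction a Frobenius exact category and $\sCM^\Z\!\Lam$ is its stable category. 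Finally, the $[1]$-finiteness of $\sCM^\Z\!\Lam$ is exactly the content of Proposition \ref{sing}.

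Having verified these, Theorem \ref{cor} applies directly and yields a triangle equivalence $\sCM^\Z\!\Lam \simeq \der(\md kQ)$ for some disjoint union $Q$ of Dynkin quivers of type A, D, E. For the statement about the AR-quiver, this is Proposition \ref{AR}: any locally finite (in particular $[1]$-finite) triangulated category containing no non-zero finite triangulated subcategory has AR-quiver of the form $\Z\Delta$ for a disjoint union $\Delta$ of Dynkin diagrams. The assumption that $\sCM^\Z\!\Lam$ contains no non-zero finite triangulated subcategory follows from \ref{simconn} (for any non-zero $X$, the objects $X[n]$ are pairwise non-isomorphic for large $|n|$, so no non-zero $X$ can generate a finite triangulated subcategory).

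The last clause — that the triangle equivalence works for any orientation $Q$ of $\Delta$ — is standard: all orientations of a given Dynkin diagram yield derived-equivalent path algebras (Happel), so $\der(\md kQ)$ depends only on $\Delta$ up to triangle equivalence. There is no real obstacle here since the genuinely hard work (the uniqueness of algebraic triangle structures and the classification) has already been carried out in Sections \ref{unithm} and \ref{main}; this section only needs to verify that the setup of graded CM modules fits the framework, which is essentially the content of Proposition \ref{sing}.
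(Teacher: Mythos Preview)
Your proposal is correct and follows essentially the same route as the paper: the paper's proof simply cites Proposition~\ref{sing} and Proposition~\ref{AR} for the AR-quiver statement, and Proposition~\ref{sing} together with Theorem~\ref{cor} for the triangle equivalence. You have merely made explicit the verification of the standing hypotheses on $\T$, the ``no non-zero finite triangulated subcategory'' condition in Proposition~\ref{AR}, and the orientation-independence of $\der(\md kQ)$, all of which the paper leaves implicit.
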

\begin{proof} The statement for the AR-quiver follows from Proposition \ref{sing} and Proposition \ref{AR}. The triangle equivalence follows from Proposition {\ref{sing}} and Theorem {\ref{cor}}. \end{proof}

A well-known class of commutative Gorenstein rings of finite representation type is given by simple singularities. Here we assume that $k$ is algebraically closed of characteristic 0. Then, they are classified up to isomorphism by the Dynkin diagrams for each $d=\dim\Lam$ and have the form $k[x,y,z_2,\ldots,z_d]/(f)$ with
\begin{itemize}
\item[($A_n$)] $f=x^2+y^{n+1}+z_2^2+\cdots+z_d^2$, \quad $(n \geq 1)$,
\item[($D_n$)] $f=x^2y+y^{n-1}+z_2^2+\cdots+z_d^2$, \quad $(n \geq 4)$,
\item[($E_6$)] $f=x^3+y^4+z_2^2+\cdots+z_d^2$,
\item[($E_7$)] $f=x^3+xy^3+z_2^2+\cdots+z_d^2$,
\item[($E_8$)] $f=x^3+y^5+z_2^2+\cdots+z_d^2$,
\end{itemize}
see \cite[Chapter 9]{LW}.
We admit any gradings on $\Lam$ so that each variable and $f$ are homogeneous of positive degrees. Then, $\Lam$ is CM-finite (in the graded sense) since its completion $\widehat{\Lam}$ at the maximal ideal $\Lam_{>0}$ is CM-finite, that is, $\CM\widehat{\Lam}$ has only finitely many indecomposable objects \cite[Chapter 15]{Y}. 

\begin{Cor}
Let $k$ be an algebraically closed field of characteristic zero and $\Lam=k[x,y,z_2,\ldots,z_d]/(f)$ with $f$ one of above. Give a grading on $\Lam$ so that each variable and $f$ are homogeneous of positive degrees. Then, the stable category $\sCM^\Z\!\Lam$ is triangle equivalent to the derived category $\der(\md kQ)$ of the path algebra $kQ$ of a disjoint union $Q$ of Dynkin quivers.
\end{Cor}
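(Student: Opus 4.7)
The plan is to reduce the Corollary to a direct application of Theorem \ref{CM}, whose conclusion is exactly the required triangle equivalence. To do this I need to verify that $\Lam = k[x,y,z_2,\ldots,z_d]/(f)$, equipped with a grading in which each variable has positive degree, satisfies the four hypotheses of Theorem \ref{CM}: positive grading with each $\Lam_n$ finite dimensional over $k$, Iwanaga-Gorensteinness, CM-finiteness, and $\gd\Lam_0 < \infty$. The algebraic closedness of $k$ is given.

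Three of these are immediate from the form of $\Lam$. Since every generator sits in positive degree, the ambient polynomial ring $k[x,y,z_2,\ldots,z_d]$ and hence the quotient $\Lam$ are positively graded with finite-dimensional graded pieces; in particular $\Lam_0 = k$, so $\gd\Lam_0 = 0$. The Iwanaga-Gorenstein property follows because $\Lam$ is a graded hypersurface: the quotient of a regular graded ring by the principal ideal generated by the single nonzero homogeneous element $f$ is a complete intersection, hence Gorenstein of Krull dimension $d$, giving $\id_\Lam\Lam = \id_{\Lam^\op}\Lam = d < \infty$.

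The main substantive step is CM-finiteness. Here I would invoke the classical result (see \cite[Chapter 15]{Y}, already cited in the discussion preceding the statement) that the completion $\widehat{\Lam}$ at the graded maximal ideal $\Lam_{>0}$ is of finite Cohen-Macaulay type, which is the representation-theoretic hallmark of simple singularities. Graded CM-finiteness of $\Lam$ then follows from the standard transfer: graded Cohen-Macaulay modules have indecomposable completions, and each indecomposable CM $\widehat{\Lam}$-module admits only finitely many graded lifts up to grade shift. Once all four hypotheses have been checked, Theorem \ref{CM} yields $\sCM^\Z\!\Lam \simeq \der(\md kQ)$ for a disjoint union $Q$ of Dynkin quivers, as required. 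The only step that requires any care is this descent of CM-finiteness from $\widehat{\Lam}$ to the graded ring $\Lam$; the other verifications are immediate consequences of the hypersurface presentation and the positivity of the grading.
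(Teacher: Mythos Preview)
Your proposal is correct and follows exactly the route the paper takes: the Corollary is an immediate application of Theorem \ref{CM}, and the paper verifies the hypotheses in the discussion just before the statement, in particular deducing graded CM-finiteness from CM-finiteness of the completion $\widehat{\Lam}$ via \cite[Chapter 15]{Y}. Your check that $\Lam_0=k$ (hence $\gd\Lam_0=0$) and that a graded hypersurface is Iwanaga--Gorenstein are the only remaining points, and both are handled as you describe.
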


We give several more examples. First we consider the case $\Lam$ is finite dimensional.
\begin{Ex}
Let
\[ \Lam=\Lam_n=k[x]/(x^n) \]
with $\deg x=1$. Then, $\Lam$ is a finite dimensional self-injective algebra. In this case we have $\CM^\Z\!\Lam=\md^\Z\!\Lam$. It is of finite representation type with indecomposable $\Lam$-modules $\Lam_i$ ($1 \leq i \leq n$), and $\Lam_0=k$ has finite global dimension. We can easily compute its AR-quiver (for $n=4$) to be
\[ \xymatrix@!C=3pt@!R=3pt{ & \cdots & \circ\ar[dr] & & \Lam_3(-1) {} \ar[dr] & & \ \Lam_3 {} \ \ar[dr] & & \Lam_3(1) {} \ar[dr] & & \circ\ar[dr] & & \cdots \\
                            \cdots & \circ\ar[dr]\ar[ur] & & \Lam_2(-2) \ar[dr]\ar[ur] & & \Lam_2(-1) {} \ar[dr]\ar[ur] & & \Lam_2 {} \ar[dr]\ar[ur] & & \Lam_2(1)\ar[dr]\ar[ur] & & \circ\ar[dr]\ar[ur] \\
                                      & \cdots & \circ\ar[ur] & & \Lam_1(-2)\ar[ur] & & \Lam_1(-1) {} \ar[ur] & & \Lam_1\ar[ur] & & \Lam_1(1)\ar[ur] & & \cdots, } \]
where the top of $\Lam_i$ is in degree 0. We see that the AR-quiver of $\smd^\Z\!\Lam$ is $\Z A_{n-1}$. Consequently, we have a triangle equivalence $\smd^\Z\!\Lam \simeq \der(\md kQ)$ for a quiver $Q$ of type $A_{n-1}$.
\end{Ex}

The next one is a finite dimensional Iwanaga-Gorenstein algebra.
\begin{Ex}
Let $\Lam$ be the algebra presented by the following quiver with relations:
\[ \xymatrix@C=12mm@R=5mm{ 1 \ar[ddr]_x & 2 \ar[l]_a\ar[ddr]_y & 3 \ar[l]_b \\ \\
                  4 \ar[uu]^c & 5 \ar[l]^f\ar[uu]^d & 6 \ar[l]^g\ar[uu]^e }
\qquad
\xymatrix@R=3mm{
da=fc, \ eb=gd, \\
ax=yg, \\
cx=0, \ xd=0, \ xf=0, \ dy=0, \ by=0, \ ye=0, }
\]
with $\deg x=\deg y=1$ and all other arrows having degree $0$. Then, it is an Iwanaga-Gorenstein algebra of dimension $1$. (In fact, this is the 3-preprojective algebra \cite{IO} of its degree $0$ part.) We can compute the AR-quiver of $\md^\Z\!\Lam$ to be the following.
\newcommand{\simp}[1]{\begin{smallmatrix}#1\end{smallmatrix}}
\newcommand{\nagni}[2]{\begin{smallmatrix}#1 \\ #2 \end{smallmatrix}}
\newcommand{\san}[3]{\begin{smallmatrix}#1 \\ #2 \\ #3 \end{smallmatrix}}
\newcommand{\niichi}[3]{\begin{smallmatrix}#1 & & #2 \\ & #3 & \end{smallmatrix}}
\newcommand{\ichini}[3]{\begin{smallmatrix}& #1 & \\ #2 & & #3 \end{smallmatrix}}
\newcommand{\ichiniichi}[4]{\begin{smallmatrix} & #1 & \\ #2 && #3 \\ & #4 &\end{smallmatrix}}
\newcommand{\nini}[4]{\begin{smallmatrix}& #1 && #2 \\ #3 && #4 & \end{smallmatrix}}
\newcommand{\mimi}[4]{\begin{smallmatrix}#1 & & #2 & \\ & #3 & & #4 \end{smallmatrix}}
\newcommand{\ichisan}[4]{\begin{smallmatrix}& & #1 \\ #2 & & #3 \\ & #4 & \end{smallmatrix}}
\newcommand{\sanichi}[4]{\begin{smallmatrix}& #1 & \\ #2 & & #3 \\ #4 & & \end{smallmatrix}}
\newcommand{\radi}{\begin{smallmatrix}& 5 & & 3 \\ 4 & & 2 & \\ & 1 & &\end{smallmatrix}}
\newcommand{\socle}{\begin{smallmatrix} & & 6 & \\ & 5 & & 3 \\ 4 & & 2 & \end{smallmatrix}}
\newcommand{\saidai}{\begin{smallmatrix} & & 6 & \\ & 5 & & 3 \\ 4 & & 2 & \\ & 1 & & \end{smallmatrix}}
\newcommand{\rokutop}{\begin{smallmatrix} & 6 & \\ 5 & 5 & 3 \\ 4 & 2 & \end{smallmatrix}}
\newcommand{\rokusoc}{\begin{smallmatrix} & 5 & 3 \\ 4 & 2 & 2 \\ & 1 & \end{smallmatrix}}
\newcommand{\gr}[1]{\begin{smallmatrix} (#1) \end{smallmatrix}}
\[ \xymatrix@!C=1mm@!R=1mm{
    & &*+[Fo:<3mm>]{\nagni{4}{1}}\ar[dr] && {\nagni{2}{6}}\ar[dr] &&&& *+[Fo:<5mm>]{\saidai}\ar[dr] &&&& *+[Fo:<4mm>]{\nagni{1}{5}\gr{1}}\ar[dr] && {\nagni{6}{3}}\ar[dr] && \\
    & *+[Fo:<3mm>]{\simp{1}}\ar[dr]\ar[ur] && {\mimi{4}{2}{1}{6}}\ar[ur]\ar[dr] && {\simp{2}}\ar[dr] && *+[Fo:<5mm>]{\radi}\ar[ur]\ar[dr] && {\socle}\ar[dr] && *+[Fo:<3mm>]{\simp{5}}\ar[ur]\ar[dr] && {\mimi{1}{6}{5}{3}\gr{1}}\ar[dr]\ar[ur] && {\simp{6}}\ar[dr] & \\
    *+[Fo:<3mm>]{\cdots}\ar[r]\ar[ur]\ar[dr] & *+[Fo:<4mm>]{\ichiniichi{2}{1}{6}{5}}\ar[r] & {\ichini{2}{1}{6}}\ar[ur]\ar[dr] && *+[Fo:<4mm>]{\niichi{4}{2}{1}}\ar[r]\ar[ur]\ar[dr] & *+[Fo:<4mm>]{\ichiniichi{5}{4}{2}{1}}\ar[r] & {\rokusoc}\ar[r]\ar[ur]\ar[dr] & {\nagni{3}{2}}\ar[r] & {\nini{5}{3}{4}{2}}\ar[r]\ar[ur]\ar[dr] & {\nagni{5}{4}}\ar[r] & {\rokutop}\ar[r]\ar[ur]\ar[dr] & {\ichiniichi{6}{5}{3}{2}}\ar[r] & {\ichini{6}{5}{3}}\ar[ur]\ar[dr] && *+[Fo:<5mm>]{\niichi{1}{6}{5}\gr{1}}\qquad\ar[r]\ar[ur]\ar[dr] & *+[Fo:<5mm>]{\ichiniichi{2}{1}{6}{5}\gr{1}}\ar[r] & \cdots \\
& {\simp{6}\gr{-1}}\ar[ur] && *+[Fo:<3mm>]{\nagni{2}{1}}\ar[ur]\ar[dr] && *+[Fo:<4mm>]{\ichisan{3}{4}{2}{1}}\ar[ur]\ar[dr] && {\ichini{5}{4}{2}}\ar[ur]\ar[dr] && {\niichi{5}{3}{2}}\ar[ur]\ar[dr] && {\sanichi{6}{5}{3}{4}}\ar[ur]\ar[dr] && *+[Fo:<3mm>]{\nagni{6}{5}}\ar[ur] && *+[Fo:<4mm>]{\simp{1}\gr{1}}\ar[ur]\ar[dr] & \\
\cdots\ar[ur] &&&& *+[Fo:<3mm>]{\san{3}{2}{1}}\ar[ur] && {\simp{4}}\ar[ur] && {\nagni{5}{2}}\ar[ur] && {\simp{3}}\ar[ur] && {\san{6}{5}{4}}\ar[ur] &&&& \cdots 
    } \]
Here, each module is graded so that its top is concentrated in degree $0$, or equivalently, its lowest degree is at $0$.
We then compute the category $\CM^\Z\!\Lam$ to be the circled modules and it is verified that the AR-quiver of $\sCM^\Z\!\Lam$ is
\[ \xymatrix@!C=1mm@!R=1mm{
   \cdots \ar[dr] && {\simp{1}}\ar[dr] && {\niichi{4}{2}{1}}\ar[dr] && {\radi}\ar[dr] && {\nagni{5}{6}}\ar[dr] && {\simp{1}\gr{1}}\ar[dr] \\
   & {\niichi{1}{6}{5}}\ar[ur] && {\nagni{2}{1}}\ar[ur] && {\ichisan{3}{4}{2}{1}}\ar[ur] && {\simp{5}}\ar[ur] && {\niichi{1}{6}{5}\gr{1}}\ar[ur] && \cdots } \]
We see that this is $\Z A_2$ and consequently $\sCM^\Z\!\Lam \simeq \der(\md kQ)$ for a quiver $Q$ of type $A_2$.
\end{Ex}

We consider as a final example a {\it Gorenstein order}: let $R=k[x_1,\ldots,x_d]$ be a polynomial ring. A Noetherian $R$-algebra $\Lam$ is an {\it{$R$-order}} if it is projective as an $R$-module. An $R$-order $\Lam$ is {\it{Gorenstein}} if $\Hom_R(\Lam,R)$ is projective as a $\Lam$-module.
In this case, Cohen-Macaulay $\Lam$-modules are $\Lam$-modules which are projective as $R$-modules.
\begin{Ex}
Let $R=k[x]$ be a graded polynomial ring with $\deg x=1$ and let
\[ \Lam=\begin{pmatrix}  R & R \\ (x^n) & R \end{pmatrix}. \]
This is a Gorenstein $R$-order of dimension 1. Its indecomposable CM modules up to grade shift are given by the row vectors $M_i=\begin{pmatrix} (x^i) & R \end{pmatrix}$ for $0 \leq i \leq n$, and $M_0$ and $M_n$ are the projectives. We define the gradings on $M_i$'s so that their top $\begin{pmatrix} 0 & k \end{pmatrix}$ is in degree 0. Then, the AR-quiver of $\CM^\Z\!\Lam$ (for $n=4$) is computed to be
\[ \xymatrix@!C=1mm@!R=1mm{
   \cdots \ar[dr] && \circ \ar[dr] && M_0(-1)\ar[dr] && M_0\ar[dr] && \cdots \\
   & \circ \ar@{.}[l]\ar[dr]\ar[ur] && M_1(-1)\ar@{.}[ll]\ar[dr]\ar[ur] && M_1\ar@{.}[ll]\ar[dr]\ar[ur] && M_1(1)\ar@{.}[ll]\ar[dr]\ar[ur] & \ar@{.}[l]\\
   \cdots \ar[dr]\ar[ur] && M_2(-1)\ar@{.}[ll]\ar[dr]\ar[ur] && M_2\ar@{.}[ll]\ar[dr]\ar[ur] && M_2(1)\ar@{.}[ll]\ar[dr]\ar[ur] && \cdots \ar@{.}[ll] \\
   & M_3(-1)\ar@{.}[l]\ar[dr]\ar[ur] && M_3\ar@{.}[ll]\ar[dr]\ar[ur] && M_3(1)\ar@{.}[ll]\ar[dr]\ar[ur] && \circ \ar@{.}[ll]\ar[dr]\ar[ur] & \ar@{.}[l] \\
   \cdots \ar[ur] && M_4\ar[ur] && M_4(1)\ar[ur] && \circ \ar[ur] &&\cdots , } \]
where the upgoing arrows are natural inclusions, the downgoing arrows are the multiplications by $x$, and the dotted lines indicate the AR-translations.
By deleting the projective vertices, we see that the AR-quiver of $\sCM^\Z\!\Lam$ is $\Z A_{n-1}$, and consequently $\sCM^\Z\!\Lam \simeq \der(\md kQ)$ for a quiver $Q$ of type $A_{n-1}$.
\end{Ex}

\thebibliography{99}
\bibitem[Am]{Am} C. Amiot, {\it{On the structure of triangulated categories with finitely many indecomposables}}, Bull. Soc. math. France 135 (3), 2007, 435-474.
\bibitem[AHK]{AHK} L. Angeleri-H\"{u}gel, D. Happel, and H. Krause, {\it Handbook of tilting theory}, London Mathematical Society Lecture Note Series 332, Cambridge University Press, Cambridge, 2007.
\bibitem[AZ]{AZ} M. Artin and J. J. Zhang, {\it Noncommutative projective schemes}, Adv. Math. 109 (1994) 228-287.
\bibitem[Au]{A} M. Auslander, {\it{Representation dimension of Artin algebras}}, Queen Mary College mathematics notes, London, 1971.
\bibitem[AR]{AR} M. Auslander and I. Reiten, {\it{$D\mathrm{Tr}$-periodic modules and functors}}, CMS Conference Proceedings 18, American Mathematical Society, Province, RI, (1996) 39-50. 
\bibitem[ARS]{ARS} M. Auslander, I. Reiten and S. O. Smal\o, {\it{Representation theory of Artin algebras}}, Cambridge studies in advanced mathematics 36, Cambridge University Press, Cambridge, 1995.
\bibitem[As]{As} H. Asashiba, {\it{A generalization of Gabriel's Galois covering functors \uppercase\expandafter{\romannumeral2}: 2-categorical Cohen-Montgomery duality}}, Appl. Categor. Struct. (2017) 25, 155-186.
\bibitem[ASS]{ASS} I. Assem, D. Simson and A. Skowro\'nski, {\it{Elements of the representation theory of associative algebras, vol.1}}, London Mathematical Society Student Texts 65, Cambridge University Press, Cambridge, 2006.
\bibitem[BIY]{BIY} R. O. Buchweitz, O. Iyama, and K. Yamaura, {\it{Tilting theory for Gorenstein rings in dimension one}}, arXiv:1803.05269.
\bibitem[CYZ]{CYZ} X. W. Chen, Y. Ye, and P. Zhang, {\it Algebras of derived dimension zero}, Communications in Algebra, 36, 1-10 (2008).
\bibitem[CR1]{CR1} C.W. Curtis and I. Reiner, {\it{Methods of representation theory with applications to finite groups and orders, vol. 1}}, Pure and Applied Mathematics, Wiley-Interscience Publication, John Wiley \& Sons, New York, 1981.
\bibitem[CR2]{CR2} C.W. Curtis and I. Reiner, {\it{Methods of representation theory with applications to finite groups and orders, vol. 2}}, Pure and Applied Mathematics, Wiley-Interscience Publication, John Wiley \& Sons, New York, 1987.
\bibitem[E]{E} H. Enomoto, {\it{Classification of exact structures and Cohen-Macaulay-finite algebras}}, arXiv:1705.02163v3.
\bibitem[ES]{ES} K. Erdmann and A. Skowro\'nski, {\it Periodic algebras}, in: {\it Trends in representation theory of algebras and related topics}, EMS series of congress reports, European Mathematical Society, Z\"{u}rich, 2008.
\bibitem[GG]{GG} R. Gordon and E. L. Green, {\it{Representation theory of graded Artin algebras}}, J. Algebra 76 (1982) 138-152.
\bibitem[GSS]{GSS} E. L. Green, N. Snashall, and \O. Solberg, {\it{The Hochschild cohomology ring of a selfinjective algebra of finite representation type}}, Proc. Amer. Math. Soc. 131 (11) (2003), 3387-3393.
\bibitem[Ha]{Hap} D. Happel, {\it{Triangulated categories in the representation theory of finite dimensional algebras}}, London Mathematical Society Lecture Note Series 119, Cambridge University Press, Cambridge, 1988.
\bibitem[He]{He} A. Heller, {\it{Stable homotopy categories}}, Bull. Amer. Math. Soc. 74 (1), 1968, 28-63.
\bibitem[I1]{Iy1} O. Iyama, {\it{The relationship between homological properties and representation theoretic realization of artin algebras}}, Trans. Amer. Math. Soc. 357 (2) (2005), 709-734.
\bibitem[I2]{Iy2} O. Iyama, {\it{Auslander correspondence}}, Adv. Math. 210 (2007) 51-82.
\bibitem[IO]{IO} O. Iyama and S. Oppermann, {\it{Stable categories of higher preprojective algebras}}, Adv. Math. 244 (2013), 23-68.
\bibitem[Ke1]{Ke} B. Keller, {\it{Deriving DG categories}}, Ann. scient. \'Ec. Norm. Sup. (4)27(1) (1994) 63-102.
\bibitem[Ke2]{Ke05} B. Keller, {\it On triangulated orbit categories}, Doc. Math. 10 (2005), 551-581.
\bibitem[Ke3]{Ke2} B. Keller, {\it{On differntial graded categories}}, Proceedings of the International Congress of Mathematicians, vol. 2, Eur. Math. Soc, 2006, 151-190.
\bibitem[Ke4]{Ke18} B. Keller, {\it A remark on a theorem by C. Amiot}, arXiv:1806.00635v1.
\bibitem[Kr]{Kr} H. Krause, {\it{Derived categories, resolutions, and Brown representability}}, Interactions between homotopy theory and algebra, Comtemp. Math. 436, 101-139, Amer. Math. Soc, Province. RI, 2007.
\bibitem[KST]{KST} H. Kajiura, K. Saito, and A. Takahashi, {\it Matrix factorizations and representations of quivers II: Type ADE case}, Adv. Math. 211 (2007) 327-362.
\bibitem[LW]{LW} G. J. Leuschke and R. Wiegand, {\it{Cohen-Macaulay representations}}, vol. 181 of Mathematical Surverys and Monographs, American Mathematical Society, Province, RI, (2012).
\bibitem[N]{Nee} A. Neeman, {\it{Triangulated Categories}}, Annals of Mathematics Studies, vol. 148, Princeton University Press, 2001.
\bibitem[Ri]{Rie} C. Riedtmann, {\it{Algebren, Darstellungsk\"ocher, Uberlagerugen und zur\"uck}}, Comment. Math. Helvetici 55 (1980) 199-224.
\bibitem[Ro]{Ro} R. Rouquier, {\it Dimensions of triangulated categories}, J. K-Theory, 1 (2008), 193-256.
\bibitem[S]{S} D. Simson, {\it Linear representations of partially ordered sets and vector space categories}, Algebra, Logic, and Applications, vol. 4, Gordon and Breach Science Publishers, 1992.
\bibitem[XZ]{XZ} J. Xiao and B. Zhu, {\it{Locally finite triangulated categories}}, J. Algebra 290 (2005) 473-490.
\bibitem[Ya]{Ya} K. Yamaura, {\it{Realizing stable categories as derived categories}}, Adv. Math. 248 (2013) 784-819.
\bibitem[Yo1]{Y} Y. Yoshino, {\it{Cohen-Macaulay modules over Cohen-Macaulay rings}}, London Mathematical Society Lecture Note Series 146, Cambridge University Press, Cambridge, 1990.
\bibitem[Yo2]{Y2} Y. Yoshino, {\it A functorial approach to modules of G-dimension zero}, Illinois. J. Math. 49 (2005) 345-367.
\end{document}